\newtheorem{thm}{Theorem}[section]
\theoremstyle{defn}
\newtheorem{defn}[thm]{Definition}
\theoremstyle{remark}
\newtheorem{remark}[thm]{Remark}
\numberwithin{equation}{section}
\newcommand{\R}{\mathbb{R}}
\newcommand{\E}{\mathbb {E}}
\newcommand{\bigO}{{O}}
\newcommand{\M}{\mathcal{M}}
\newcommand{\C}{{C}}
\newcommand{\cov}{\mathrm{cov}}
\newcommand{\curv}{\kappa}
\newcommand{\dimamb}{D}
\newcommand{\dimX}{d}
\newcommand{\sphere}[1]{\mathbb{S}^{{#1}}}
\newcommand{\Z}{\mathbb{Z}}
\newcommand{\reach}{\mathrm{reach}}
\newcommand{\Cjk}{C_{j,k}}
\newcommand{\Err}{{\mathcal{E}}}
\newcommand{\jk}{{j,k}}
\newcommand{\jpkp}{{j+1,k'}}
\newcommand{\jx}{{j,x}}
\newcommand{\jpx}{{j+1,x}}
\newcommand{\children}{\mathrm{children}}
\newcommand{\myspan}[1]{\langle{#1}\rangle}
\newcommand{\ctr}{{c}}
\newcommand{\cjk}{\ctr_\jk}
\newcommand{\Paff}{\mathbb{P}}
\newcommand{\Qaff}{\mathbb{Q}}
\newcommand{\V}{V}
\newcommand{\Vaff}{\mathbb{V}}
\newcommand{\epsEncode}{$\epsilon$-encode }
\newcommand{\epsEncoding}{$\epsilon$-encoding }
\newcommand{\epsEncoded}{$\epsilon$-encoded }
\newcommand{\cost}{\varphi}
\newcommand{\probjk}{\pi}
\newcommand{\rank}{\mathrm{rank}}
\begin{document}

%\psdraft

\title{Multiscale Geometric Methods for Data Sets II: Geometric Multi-Resolution Analysis}

\author{William K.~Allard}
\address{Mathematics Department, Duke University, P.O.~Box 90320, Durham, NC 27708, U.S.A.}
\email{wka@math.duke.edu}
\thanks{The authors thank E.~Monson for useful discussions.}

\author{ Guangliang Chen}
\address{Mathematics Department, Duke University, P.O.~Box 90320, Durham, NC 27708, U.S.A.}
\email{glchen@math.duke.edu}
\thanks{GC was partially supported by ONR N00014-07-1-0625 and NSF CCF 0808847.}

\author{Mauro Maggioni}%~\footnote{Corresponding author}
\address{Mathematics and Computer Science Departments, Duke University, P.O.~Box 90320, Durham, NC 27708, U.S.A.}
\email{mauro@math.duke.edu (corresponding author)}
%\ead[url]{http://www.math.duke.edu/\~\,mauro}
\thanks{MM is grateful for partial support from DARPA, NSF, ONR, and the Sloan Foundation.}
%\author { William K. Allard$^1$}
%\author { Guangliang Chen$^1$}
%\ead{glchen@math.duke.edu}
%\author { Mauro Maggioni$^{1,2}$ }
%\ead{mauro@math.duke.edu}
%\ead[url]{http://www.math.duke.edu/\~\,mauro}
%\address{$^1$Mathematics and $^2$Computer Science Departments, Duke University,\\ P.O.~Box 90320, Durham, NC 27708, U.S.A.}

\date{September 7, 2011}
\keywords{Multiscale Analysis. Wavelets. Data Sets. Point Clouds. Frames. Sparse Approximation. Dictionary Learning.}

\begin{abstract}
Data sets are often modeled as samples from a probability distribution in $\R^D$, for $D$ large. It is often assumed that the data has some interesting low-dimensional structure, for example that of a $\dimX$-dimensional manifold $\M$, with $d$ much smaller than $D$. When $\M$ is simply a linear subspace, one may exploit this assumption for encoding efficiently the data by projecting onto a dictionary of $d$ vectors in $\R^D$ (for example found by SVD), at a cost $(n+D)d$ for $n$ data points. When $\M$ is nonlinear, there are no ``explicit'' and algorithmically efficient constructions of dictionaries that achieve a similar efficiency: typically one uses either random dictionaries, or dictionaries obtained by black-box global optimization. In this paper we construct data-dependent multi-scale dictionaries that aim at efficiently encoding and manipulating the data. Their construction is fast, and so are the algorithms that map data points to dictionary coefficients and vice versa, in contrast with $L^1$-type sparsity-seeking algorithms, but alike adaptive nonlinear approximation in classical multiscale analysis. In addition, data points are guaranteed to have a compressible representation in terms of the dictionary, depending on the assumptions on the geometry of the underlying probability distribution.
\end{abstract}

%Our construction is also related to algebraic multigrid techniques (without grids), but has high-precision and no cycles.
%Our construction can be viewed as a generalization of some Fast Multipole Methods, achieved through a different point of view, and of the non-standard wavelet representation of Calder\'on-Zygmund and pseudodifferential operators, achieved through a different multi-resolution analysis adapted to the operator.

%\tableofcontents

\maketitle

%
% Introduction
%
\section{Introduction}
We construct Geometric Multi-Resolution Analyses for analyzing intrinsically low-dimensional point clouds in high-dimensional spaces, modeled as samples from a probability distribution supported on $d$-dimensional set $\M$ (in particular, a manifold) embedded in $\R^D$, in the regime $d\ll D$.
This setting has been recognized as important in various applications, ranging from the analysis of sounds, images (RGB or hyperspectral, \cite{CLMKSWZ:GeometrySensorOutputs}), to gene arrays, EEG signals \cite{MM:EEG}, and other types of manifold-valued data \cite{Donoho:MultiscaleManifoldValued}, and has been at the center of much investigation in the applied mathematics~\cite{DiffusionPNAS,DiffusionPNAS2,CM:SiamNews} and machine learning communities during the past several years.
This has lead to a flurry of research on several problems, old and new, such as estimating the intrinsic dimensionality of point clouds~\cite{MM:MultiscaleDimensionalityEstimationAAAI,LMR:MGM1,Hero:LearningIntrinsicDimension,camastra01intrinsic,Vinci:EstimatingIntrinsicDimension,10.1109/ICPR.2006.865}, parametrizing sampled manifolds~\cite{DiffusionPNAS,isomap,RSLLE,BN,DoGri:WhenDoesIsoMap,DG_HessianEigenmaps,ZhaZha,jms:UniformizationEigenfunctions,jms:UniformizationEigenfunctions2}, constructing dictionaries tuned to the data~\cite{Aharon05ksvd,SzlamICML09} or for functions on the data \cite{CMDiffusionWavelets,DiffusionWaveletPackets,MSCB:MultiscaleManifoldMethods,MBCS:BiorthogonalDiffusionWavelets}, and their applications to machine learning and function approximation~\cite{smmm:jmrl1,smmm:FastDirectMDP,SMC:GeneralFrameworkAdaptiveRegularization,CM:MsDataDiffWavelets}.

We focus on obtaining multi-scale representations in order to organize the data in a natural fashion, and obtain efficient data structures for data storage, transmission, manipulation, at different levels of precision that may be requested or needed for particular tasks.
This work ties with a significant amount of recent work in different directions:
(a) Harmonic analysis and efficient representations of signals;
(b) Data-adaptive signal representations in high dimensional spaces and dictionary learning;
(c) Hierarchical structures for organization of data sets;
(d) Geometric analysis of low-dimensional sets in high-dimensional spaces.

{\bf{Harmonic analysis and efficient representations of signals}}.
Representations of classes of signals and data have been an important branch of research in multiple disciplines.
In harmonic analysis, a linear infinite-dimensional function space $\mathcal{F}$ typically models the class of signals of interest, and linear representations in the form $f=\sum_i \alpha_i \phi_i$, for $f\in\mathcal{F}$ in terms of a dictionary of atoms $\Phi:=\{\phi_i\}\subseteq\mathcal{F}$ are studied.
Such dictionaries may be bases or frames, and are constructed so that the sequence of coefficients $\{\alpha_i\}_i$ has desirable properties, such as some form of sparsity, or a distribution highly concentrated at zero.
Requiring sparsity of the representation is very natural from the viewpoints of statistics, signal processing, and interpretation of the representation.
This, in part, motivated the construction of Fourier-like bases, wavelets, wedgelets, ridgelets, curvelets etc...~\cite{Coifman93signal,CD_CurveletSurprise,chen:33}, just to name a few.
Several such dictionaries are proven to provide optimal representations (in a suitably defined sense) for certain classes of function spaces (e.g.~some simple models for images) and/or for operators on such spaces.
While orthogonal dictionaries were originally preferred (e.g. \cite{Dau}), a trend developed towards over-complete dictionaries (e.g.~frames \cite{Dau,MR1946982} and references therein) and libraries of dictionaries (e.g.~wavelet and cosine packets~\cite{Coifman93signal}, multiple dictionaries \cite{Starck04imagedecomposition}, fusion frames \cite{Casazza:FusionFrames}), for which the set of coefficients $(\alpha_i)_i$ needed to represent a signal $f$ is typically non-unique.
Fast transforms, crucial in applications, have often been considered a fundamental hallmark of several of the transforms above, and was usually achieved through a multi-scale organization of the dictionaries.

{\bf{ Data-adaptive signal representation and dictionary learning}}.
A more recent trend ~\cite{chen:33,OF:SparseCodingV1,Aharon05ksvd,CarinNIPS09,MairalBPS09,SzlamICML09}, motivated by the desire to model classes of signals that are not well-modeled by the linear structure of function spaces, has been that of {\em{constructing data-adapted dictionaries}}: an algorithm is allowed to see samples from a class of signals $\mathcal{F}$ (not necessarily a linear function space), and constructs a dictionary $\Phi:=\{\phi_i\}_i$ that optimizes some functional, such as the sparsity of the coefficients for signals in $\mathcal{F}$.
The problem becomes being able to construct the dictionary $\Phi$, typically highly over-complete, so that, given $f\in\mathcal{F}$, a rapid computation of the ``best'' (e.g.~sparsest) coefficients $(\alpha_i)_i$ so that $f=\sum_i \alpha_i\phi_i$ is possible, and $(\alpha_i)_i$ is sparse.
The problem of constructing $\Phi$ with the properties above, given a sample $\{f_n\}_n\subseteq\mathcal{F}$, is often called \emph{dictionary learning}, and has been at the forefront of much recent research in harmonic analysis, approximation theory, imaging, vision, and machine learning: see ~\cite{OF:SparseCodingV1,Aharon05ksvd,CarinNIPS09,MairalBPS09,SzlamICML09} and references therein for constructions and applications.

There are several parameters in this problem: given training data from $\mathcal{F}$, one seeks $\Phi$ with $I$ elements, such that every element in the training set may be represented, up to a certain precision $\epsilon$, by at most $m$ elements of the dictionary.
The smaller $I$ and $m$ are, for a given $\epsilon$, the better the dictionary.

Several current approaches may be summarized as follows \cite{Mairal:OnlineLearningSparseCoding}: consider a finite training set of signals $X_n=\{x_i\}_{i=1}^n\subset \mathbb{R}^{D}$, which we may represent by a $\mathbb{R}^{\dimamb\times n}$ matrix, and optimize the cost function
\begin{equation}f_n(\Phi)=\frac1n\sum_{i=1}^n\ell(x_i,\Phi)\end{equation}
where $\Phi\in\mathbb{R}^{D\times I}$ is the dictionary, and $\ell$ a loss function, for example
\begin{equation}\ell(x,\Phi):=\min_{\alpha\in\mathbb{R}^I}\frac12||x-\Phi\alpha||_{\mathbb{R}^\dimamb}^2+\lambda||\alpha||_1\end{equation}
where $\lambda$ is a regularization parameter. This is basis pursuit \cite{chen:33} or lasso \cite{Tibshirani:Lasso}.
One typically adds constraints on the size of the columns of $\Phi$, for example $||\phi_i||_{\mathbb{R}^\dimamb}\le1$ for all $i$, which we can write as $\Phi\in\mathcal{C}$ for some convex set $\mathcal{C}$.
The overall problem may then be written as a matrix factorization problem with a sparsity penalty:
\begin{equation}\min_{\Phi\in\mathcal{C},\alpha\in\mathbb{R}^{I\times n}}\frac12||X_n-\Phi\alpha||_F^2+\lambda||\alpha||_{1,1}\,,\end{equation}
where $||\alpha||_{1,1}:=\sum_{i_1,i_2}|\alpha_{i_1,i_2}|$.
While for a fixed $\Phi$ the problem of minimizing over $\alpha$ is convex, and for fixed $\alpha$ the problem of minimizing over $\Phi$'s is also convex, the joint minimization problem is non-convex, and alternate minimization methods are often employed. Overall, this requires minimizing a non-convex function over a very high-dimensional space.
We refer the reader to \cite{Mairal:OnlineLearningSparseCoding} and references therein for techniques for attacking this optimization problem.

Constructions of such dictionaries (e.g.~K-SVD~\cite{Aharon05ksvd}, $k$-flats~\cite{SzlamICML09}, optimization-based methods \cite{Mairal:OnlineLearningSparseCoding}, Bayesian methods~\cite{CarinNIPS09}) generally involve optimization or heuristic algorithms which are computationally intensive, do not shed light on the relationships between the dictionary size $I$, the sparsity of $\alpha$, and the precision $\epsilon$, and the resulting dictionary $\Phi$ is typically unstructured, and finding computationally, or analyzing mathematically, the sparse set of coefficients $\alpha$ may be challenging.

In this paper we construct data-dependent dictionaries based on a Geometric Multi-Resolution Analysis of the data. This approach is motivated by the intrinsically low-dimensional structure of many data sets, and is inspired by multi-scale geometric analysis techniques in geometric measure theory such as those in \cite{Jones-TSP,DS}, as well as by techniques in multi-scale approximation for functions in high-dimension \cite{Binev02fastcomputation,Binev04universalalgorithms}. These dictionaries are structured in a multi-scale fashion (a structure that we call Geometric Multi-Resolution Analysis) and can be computed efficiently; the expansion of a data point on the dictionary elements is guaranteed to have a certain degree of sparsity $m$, and may be computed by a fast algorithm; the growth of the number of dictionary elements $I$ as a function of $\epsilon$ is controlled depending on geometric properties of the data.
We call the elements of these dictionaries {\em{geometric wavelets}}, since in some respects they generalize wavelets from vectors that analyze functions in linear spaces to affine vectors that analyze point clouds with possibly nonlinear geometry. The multi-scale analysis associated with geometric wavelets shares some similarities with that of standard wavelets (e.g.~fast transforms, a version of two-scale relations, etc...), but is in fact quite different in many crucial respects. It is nonlinear, as it adapts to arbitrary nonlinear manifolds modeling the data space $\mathcal{F}$, albeit every scale-to-scale step is linear; translations or dilations do not play any role here, while they are often considered crucial in classical wavelet constructions.
Geometric wavelets may allow the design of new algorithms for manipulating point clouds similar to those used for wavelets to manipulate functions.

%{\bf{Hierarchical structures for organization of data sets}}. It is well-understood that structures for efficient data organization and manipulation are often multi-scale in nature: examples include trees for fast nearest neighbor computations, fast multipole methods for discretized integral operators (e.g. kernels in machine learning), multi-scale Gaussian models in statistics. Our construction may be interpreted as a higher order generalization of the constructions for fast nearest neighbor computations, or also as an efficient organization of multi-scale Gaussian models. It also connects with natural hierarchical procedures that seek to organize large data sets, and we will argue that it should be amenable to interpretation in practice.

%{\bf{Geometric analysis of low-dimensional sets in high-dimensional spaces}}. Classical work in harmonic analysis and geometric measure theory is based on geometric multi-scale analysis, where a low-dimensional set may be characterized by local multi-scale approximations, and so do the properties of certain large natural classes of integral operators singular on such sets.

The rest of the paper is organized as follows. In Sec.~\ref{sec:construct_gW} we describe how to construct the geometric wavelets in a multi-scale fashion.
We then present our algorithms in Sec.~\ref{sec:algorithm} and illustrate them on a few data sets, both synthetic and real-world, in Sec.~\ref{sec:examples}.
Sec.~\ref{sec:OGMRA} introduces an orthogonal verison of the construction; more variations or optimizations of the construction are postponed to Sec.~\ref{sec:variations}.
The next two sections discuss how to represent and compress data efficiently (Sec.~\ref{sec:representation}) and computational costs (Sec.~\ref{sec:computational}).
A naive attempt at modeling distributions is performed in Sec.~\ref{sec:modelingDistribution}.
Finally, the paper is concluded in Sec.~\ref{sec:FutureWork} by pointing out some future directions.

%
% Multiscale SVD
%

\section{Construction of Geometric Multi-Resolution Analyses}
\label{sec:construct_gW}

Let $(\M,\rho,\mu)$ be a metric measure space with $\mu$ a Borel probability measure and $\M\subseteq\R^\dimamb$.
%% satisfying the Ahlfors regularity condition
%%\begin{equation}
%%c^{-1} r^\dimX\le\mu(\M\cap \mathbb{B}^\dimamb_z(r))\le c r^\dimX
%%\end{equation}
%%for some constant $c>0$, uniformly for a.e.~$x\in\M$, and $r\in(0,\mathrm{diam}(\M))$. Here $\mathbb{B}^\dimamb_z(r)$ denotes the Euclidean ball in $\R^\dimamb$ of radius $r$, centered at $z$.
%%This condition gives information about how the set $\M$ is embedded in $\R^\dimamb$.
%%We shall also assume that $(\M,\rho,\mu)$ is a space of homogeneous type, i.e., $\mu$ is completely non-atomic and there exists a constant $c>0$ such that
%%\begin{equation}
%%\mu(B_z(2r))\le c^\dimX \mu(B_z(r))
%%\end{equation}
%%for every $z\in\M$ and $r>0$. Here $B^\M_z(r)$ is the $\rho$-ball in $\M$ of radius $r$ centered at $z$.
In this paper we restrict our attention, in the theoretical sections, to the case when $(\M,\rho,\mu)$ is a smooth compact Riemannian manifold of dimension $\dimX$ isometrically embedded in $\R^\dimamb$, endowed with the natural volume measure;
in the numerical examples, $(\M,\rho,\mu)$ will be a finite discrete metric space with counting measure, not necessarily obtained by sampling a manifold as above.
We will be interested in the case when the ``dimension'' $\dimX$ of $\M$ is much smaller than the dimension of the ambient space $\R^\dimamb$.
While $\dimX$ is typically unknown in practice, efficient (multi-scale, geometric) algorithms for its estimation are available (see \cite{LMR:MGM1}, which also contains many references to previous work on this problem), under additional assumptions on the geometry of $\M$.

Our construction of a Geometric Multi-Resolution Analyses (GMRA) consists of three steps:
\begin{itemize}
\item[1.] A multi-scale geometric {\em{tree decomposition}} of $\M$ into subsets $\{\C_\jk\}_{k\in\mathcal{K}_j,j\in \mathbb{Z}}$.
\item[2.] A $\dimX$-dimensional {\em{affine approximation}} in each dyadic cell $\C_\jk$, yielding a sequence of approximating piecewise linear sets $\{\M_j\}$, one for each scale $j$.
\item[3.] A construction of low-dimensional {\em{affine difference operators}} that efficiently encode the differences between $\M_j$ and $\M_{j+1}$.
\end{itemize}
This construction parallels, in a geometric setting, that of classical multi-scale wavelet analysis \cite{Dau,Mallat,Mallat2,Mallat_book,Meyer2}:
the nonlinear space $\M$ replaces the classical function spaces, the piecewise affine approximation at each scale substitutes the linear projection on scaling function spaces, and the difference operators play the role of the classical linear wavelet projections.
We show that when $\M$ is a smooth manifold, guarantees on the approximation rates of $\M$ by the $\M_j$ may be derived (see Theorem~\ref{t:GWT} in Sec.~\ref{subsec:atheorem}), implying compressibility of the GMRA representation of the data.

We construct bases for the various affine operators involved, producing a hierarchically organized dictionary that is adapted to the data, which we expect to be useful in the applications discussed in the introduction.

%Finally, efficient algorithms exist for computing both the multi-scale structure above and the geometric wavelet transform together with its inverse.
%Such efficient algorithms are currently not available for any of the algorithms in the dictionary-learning community, albeit algorithms which are often fast in practice do exist.

%
\subsection{Tree decomposition}
Let $B^\M_r(x)$ be the $\rho$-ball inside $\M$ of radius $r>0$ centered at $x\in\M$. We start by a spatial multi-scale decomposition of the data set $\M$.
\begin{defn} A {\bf{tree decomposition}} of a $d$-dimensional metric measure space $(\M,\rho,\mu)$ is a family of open sets in $\M$,
$\{\C_{\jk}\}_{k\in\mathcal{K}_j, j\in\mathbb{Z}}$, called {\em{dyadic cells}},
such that
\begin{itemize}
\item[(i)] for every $j\in\mathbb{Z}$, $\mu(\M\setminus\cup_{k\in\mathcal{K}_j}\C_{\jk})=0$;
\item[(ii)] for $j'\ge j$ and $k'\in\mathcal{K}_{j'}$, either $\C_{j',k'}\subseteq \C_{\jk}$ or $\mu(\C_{j',k'}\cap \C_\jk)=0$;
\item[(iii)] for $j<j'$ and $k'\in\mathcal{K}_{j'}$, there exists a unique $k\in\mathcal{K}_{j}$ such that $\C_{j',k'}\subseteq \C_{\jk}$;
\item[(iv)] each $\C_{\jk}$ contains a point $c_\jk$ such that
$B^\M_{c_1\cdot2^{-j}}(c_{\jk})\subseteq \C_{\jk}\subseteq B^\M_{2^{-j}}(c_{\jk})\,,$
for a constant $c_1$ depending on intrinsic geometric properties of $\M$.
In particular, we have $\mu(\C_\jk)\sim 2^{-\dimX j}$.
\end{itemize}
\label{d:dyadiccubes}
\end{defn}
%The numbers $c_1,c_2$ depend on intrinsic geometric properties of $\M$.
%We may extend these decomposition, up to possibly making $j_0$ larger, to the tube of radius $r_0$ around $\M$, and we will call the corresponding cells $\overline{\Cjk}$.
The construction of such tree decompositions is possible on spaces of homogeneous type \cite{MR1096400,Da,David:WaveletsAndSingularIntegrals}.
Let $\mathcal{T}$ be the tree structure associated to the decomposition above:
%We shall assume that $\mathcal{T}$ is exactly $2^\dimX$-adic. It will be clear how to modify the arguments when $\mathcal{T}$ is not a homogeneous tree.
for any $j\in\Z$ and $k\in\mathcal{K}_j$, we let
$\children(j,k)=\left\{k'\in\mathcal{K}_{j+1} : \C_{j+1,k'}\subseteq \C_{j,k}\right\}$.
Note that $\C_\jk$ is the disjoint union of its children $\C_{j+1,k'}, k' \in \children(j,k)$, due to (ii).
We assume that $\mu(\M)\sim 1$ such that there is only one cell at the root of the tree with scale $\log_{2^\dimX}\mu(\M)=0$ (thus we will only consider $j\ge 0$).
For every $x\in\M$, with abuse of notation we use $(j,x)$ to represent the unique $(j,k(x)),k(x)\in\mathcal{K}_j$ such that $x\in\C_{j,k(x)}$.
%(Thus, we always have $x\in \C_{j,x}$ for any $x$ and $j$.)
The family of dyadic cells $\{\C_{j,k}\}_{k\in \mathcal{K}_j}$ at scale $j$ generates a $\sigma$-algebra $\mathcal{F}_j$. Functions measurable with respect to this $\sigma$-algebra are piecewise constant on each cell.

In this paper we will construct dyadic cells on i.i.d.~$\mu$-distributed samples $\{x_i\}_{i=1}^n$ from $\M$ according to the following variation of the construction of diffusion maps \cite{DiffusionPNAS,CLAcha1}: we connect each $x_i$ to its $k$-nearest neighbors  (default value is $k=50$), with weights $W_{ij}=K(x_i,x_j)=e^{-||x_i-x_j||^2/\epsilon_i\epsilon_j}$, where $\epsilon_i$ is the distance between $x_i$ and its $k/2$-nearest neighbor, to obtain a weighted graph on the samples $x_i$ (this construction is used and motivated in \cite{RZMC:ReactionCoordinatesLocalScaling}).
We then make use of METIS \cite{KarypisSIAM99-METIS} to produce the multi-scale partitions $\{\C_{j,k}\}$ and the dyadic tree $\mathcal{T}$ above.
In a future publication we will discuss how to use a variation of cover trees \cite{LangfordICML06-CoverTree}, which has guarantees in terms of both the quality of the decomposition and computational costs, and has the additional advantage of being easily updatable with new samples.

We may also construct the cells $\C_\jk$ by intersecting Euclidean dyadic cubes in $\R^\dimamb$ with $\M$: if $\M$ is sufficiently regular and so is its embedding in $\R^\dimamb$ (e.g.~$\M$ a smooth compact isometrically embedded manifold, or a dense set of samples, distributed according to volume measure, from it), then the properties in Definition \ref{d:dyadiccubes} are satisfied for $j$ large enough. In this case, a careful numerical implementation is needed in order to not be penalized by the ambient dimensionality (e.g.~\cite{Binev20112063} and references therein).

\begin{defn}
We define
\begin{align}
D(\M)			&=\{y\in\R^\dimamb : \exists!\ x\in\M \, \textrm{such that}\, ||x-y||=\min_{x'\in\M}||x'-y||\},\\
\mathrm{tub}_r(\M)	&=\{y\in\R^\dimamb: d(y,\M)<r\}
\end{align}
and, following H. Federer \cite{Federer:CurvatureMeasures},
\begin{equation}\mathrm{reach}(\M)=\sup \{ r\ge 0\,:\,\mathrm{tub}_r(\M)\subset D(\M)\}\,.\end{equation}
\end{defn}
For $x\in\mathrm{reach}(\M)$, let $x^*$ be the point in $\M$ closest to $x$.

One may think of $\reach(\M)$ as the largest radius of a non-self-intersecting tube around $\M$, which depends on the embedding of $\M$ in $\R^\dimamb$.
This notion has appeared under different names, such as ``condition number of a manifold'', in recent manifold learning literature \cite{Wakin:RandomProjectionsSmoothManifolds,Niyogi:homology}, as a key measure of the complexity of $\M$ embedded in $\R^\dimamb$.
In our setting, we require positive $\reach(\M)$ only in order to obtain uniform estimates, but for local (or pointwise) estimates only require $\reach(B^\M_z(r))$, or $\reach(\M\cap \mathbb{B}^\dimamb_z(r))$, for all $r$'s sufficiently small (depending on $z$).

%
% Multiscale singular value decomposition
%
\subsection{Multiscale singular value decompositions and geometric scaling functions}
The tools we build upon are classical in multi-scale geometric measure theory \cite{MR1103619,David:UniformRectifiability,David:WaveletsAndSingularIntegrals}, especially in its intersection with harmonic analysis, and it is also related to adaptive approximation in high dimensions, see for example \cite{Binev02fastcomputation,Binev04universalalgorithms} and references therein.
An introduction to the use of such ideas for the estimation of intrinsic dimension of point clouds is in \cite{LMR:MGM1} and references therein (see \cite{MM:MultiscaleDimensionalityEstimationAAAI,MM:MultiscaleDimensionalityEstimationSSP} for previous short accounts).

We will associate several gadgets to each dyadic cell $\C_\jk$, starting with some geometric objects: the mean
\begin{equation}
\cjk:=\E_\mu[x|x\in\C_\jx]=\frac1{\mu(\C_\jk)}\int_{\C_\jk} x\, d\mu(x)\,\in\R^D
\label{e:ctrjk}
\end{equation}
and the covariance operator restricted to $C_{j,k}$
\begin{equation}
\begin{aligned}
\cov_\jk
&= \E_\mu[(x-\cjk)(x-\cjk)^*|x\in\C_\jk] \in \mathbb{R}^{D\times D}\,.
\end{aligned}
\end{equation}
%which is a $\dimamb\times\dimamb$ matrix.
%i.e. the $\dimamb\times\dimamb$ matrix whose $i,l$ entry is
%\begin{equation}
%	\frac{1}{\mu(\Cjk)}\int_{\Cjk} (x-\cjk)_i (x-\cjk)_l\, d\mu(x)
%\end{equation}
Here and in what follows points in $\R^D$ are identified with $D$-dimensional column vectors.
For a prescribed $\dimX_\jk$ (e.g.~$\dimX_\jk=\dimX$), let the rank-$\dimX_\jk$ Singular Value Decomposition (SVD) \cite{Golub} of $\cov_\jk$ be
\begin{equation}
\cov_\jk \approx \Phi_\jk \Sigma_\jk \Phi_\jk^*,
\label{e:svdcovjk}
\end{equation}
where $\Phi_\jk$ is an orthonormal $D\times \dimX_\jk$ matrix and $\Sigma$ is a diagonal $\dimX_\jk\times \dimX_\jk$ matrix. %with eigenvalues $\eig_1\ge\dots\ge\eig_{\dimX}\ge0$.
The linear projection operator onto the subspace $\langle\Phi_\jk\rangle$ spanned by the columns of $\Phi_\jk$ will be denoted by $P_\jk$.
We let
\begin{equation}
\begin{aligned}
\Vaff_{j,k}:=\V_{j,k}+c_{j,k} &\quad,\quad& \V_{j,k}=\myspan{\Phi_{j,k}}\,,
\end{aligned}
\end{equation}
where $\myspan{A}$ denotes the span of the columns of $A$,
so that $\Vaff_{j,k}$ is the affine subspace of dimension $\dimX_\jk$ parallel to $\V_{j,k}$ and passing through $c_{j,k}$.
It is an approximate tangent space to $\M$ at location $\cjk$ and scale $2^{-j}$; and in fact it provides the best $\dimX_\jk$-dimensional planar approximation to $\M$ in the least square sense:
\begin{equation}
\Vaff_\jk = \underset{\Pi}{\operatorname{argmin}} \int_{\C_\jk} ||x-\Paff_\Pi(x)||^2\,d\mu(x)\,,
\label{e:Vaff}
\end{equation}
where $\Pi$ is taken on the set of all affine $d_\jk$-planes, and $\Paff_{\Pi}$ is the orthogonal projection onto the affine plane $\Pi$.
We think of $\{\Phi_\jk\}_{k\in\mathcal{K}_j}$ as the geometric analogue of a family of scaling functions at scale $j$, and therefore call {\em{geometric scaling functions}}.
Let $\Paff_\jk$ be the associated affine projection
\begin{equation}
\Paff_\jk(x) :=P_\jk(x-\cjk)+\cjk=\Phi_\jk \Phi_\jk^*(x-\cjk)+ \cjk,\quad x\in\Cjk\,.
\label{e:Pjkdef}
\end{equation}

Then $\Paff_\jk(\Cjk)$ is the projection of $\Cjk$ onto its local linear approximation, at least for $2^{-j}\lesssim \mathrm{reach}({\M})$.

We let
\begin{equation}
\label{e:M_j}
\M_j :=  \{ \Paff_\jk(\Cjk) \}_{k\in\mathcal{K}_j}
\end{equation}
be a coarse approximation of $\M$ at scale $j$, the geometric analogue to what the projection of a function onto a scaling function subspace is in wavelet theory.
Under general conditions, $\M_{j}\rightarrow\M$ in the Hausdorff distance, as $j\rightarrow+\infty$.
It is natural to define the nonlinear projection of $\M$ onto $\M_j$ by
\begin{equation}
x_j \equiv P_{\M_j}(x):=\Paff_\jk(x) \qquad,\qquad x\in\Cjk\,.
\label{e:PMj}
\end{equation}

%
% Multiscale geometric wavelets
%
\subsection{Geometric wavelets}

\begin{figure}[t]
\centering
%\begin{minipage}{0.48\textwidth}
\includegraphics[width=12cm]{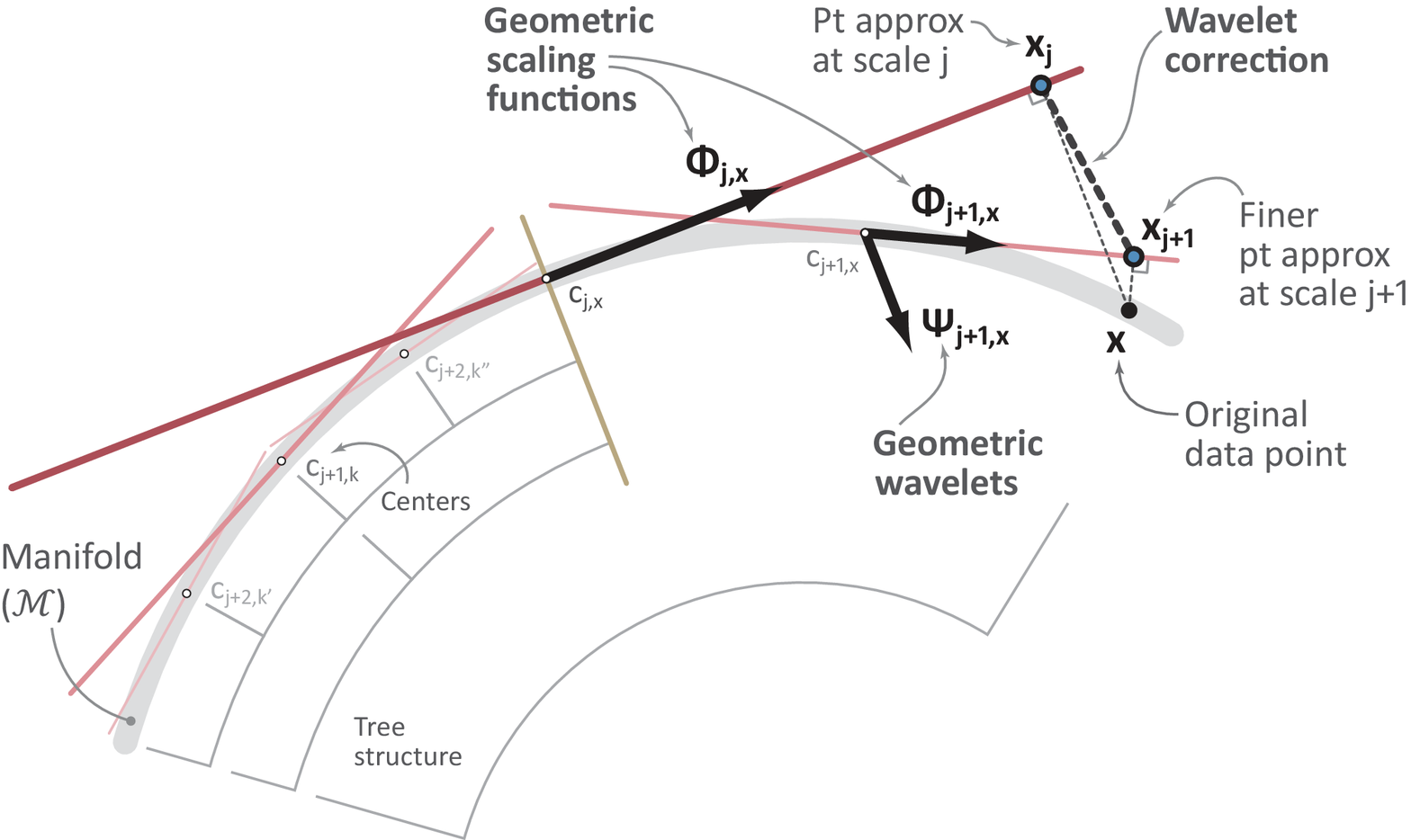}
%\end{minipage}
\caption{An illustration of the geometric wavelet decomposition. The centers $c_{j,x}$'s are represented as lying on $\M$ while in fact they are only close (to second order) to $\M$, and the corresponding planes $\Vaff_\jx$ are represented as tangent planes, albeit they are only an approximation to them. Art by E. Monson.}
\end{figure}

We would like to efficiently encode the difference needed to go from $\M_j$ to $\M_{j+1}$, for $j\ge 0$.
Fix $x\in\M$: the difference $x_{j+1}-x_j$ is a high-dimensional vector in $\R^\dimamb$, in general not contained in $\M_{j+1}$. However it may be decomposed into a sum of vectors in certain well-chosen low-dimensional spaces,
which are shared across multiple points, in a multi-scale fashion.
Recall that we use the notation $(\jx)$ to denote the unique pair $(j,k)$, with $k\in\mathcal{K}_j$, such that $x\in\C_\jk$.
We proceed as follows: for $j\le J-1$ we let
\begin{align}
Q_{\M_{j+1}}(x)
&:= x_{j+1}-x_j \nonumber \\
%:=P_{\M_{j+1}}(x)-P_{\M_j}(x)\\
&=\left( x_{j+1}-\Paff_{j,x}(x_{j+1})\right)+\left(\Paff_{j,x}(x_{j+1})-\Paff_{j,x}(x)\right) \nonumber \\
%&=x_{j+1}-P_{j,x}(x_{j+1}-c_{j,x})-c_{j,x}+P_{j,x}(x_{j+1}-c_{j,x})+c_{j,x}-P_{j,x}(x-c_{j,x})-c_{j,x}\\
&=(I-P_{j,x})(x_{j+1}-c_{j,x})+P_{j,x}(x_{j+1}-x) \nonumber \\
&=(I-P_{j,x})(\underbrace{x_{j+1}-c_{j+1,x}}_{\in\V_\jpx}+c_{j+1,x}-c_{j,x})-P_{j,x}(x-x_{j+1}).
\label{e:QMj}
\end{align}
Let $W_\jpx$ be the geometric wavelet subspace defined by
\begin{equation}
W_\jpx := (I-P_\jx)\,\V_\jpx\,, %=P_{j,x}^\perp \V_\jpx\,.
\label{e:defWjpx}
\end{equation}
$\Psi_\jpx$ an orthonormal basis for $W_\jpx$, that we will call a {\em{geometric wavelet basis}},
and $Q_\jpx$ the orthogonal projection onto $W_\jpx$.
Clearly $\dim W_\jpx\le\dim\V_\jpx=\dimX_\jpx$.
If we define the quantities
\begin{align}
t_{j+1,x}&:=c_{j+1,x}-c_{j,x}; \\
w_{j+1,x} &:= (I-P_{j,x})\, t_{j+1,x}; \\
\Qaff_\jpx(x)&:= Q_\jpx(x-c_\jpx)+w_\jpx\,,
\label{e:twqffdef}
\end{align}
then we may rewrite \eqref{e:QMj} as
\begin{align}
Q_{\M_{j+1}}(x)
&=\underbrace{Q_{j+1,x}(x_{j+1}-c_{j+1,x})}_{\in W_{j+1,x}}+w_{j+1,x}-P_{j,x}\left(x-x_J+\sum_{l=j+1}^{J-1} (x_{l+1}-x_{l})\right)\nonumber \\
&=\Qaff_{j+1,x}(x_{j+1})-P_{j,x}\sum_{l=j+1}^{J-1} (x_{l+1}-x_{l})-P_{j,x}(x-x_J) \nonumber \\
&=\Qaff_{j+1,x}(x_{j+1})-P_{j,x} \sum_{l=j+1}^{J-1}Q_{\M_{l+1}}(x)-P_{j,x}(x-x_J).
\label{e:QMj2}
\end{align}
Here $J\ge j+1$ is the index of the finest scale (and the last term vanishes as $J\rightarrow+\infty$, under general conditions).
In terms of the geometric scaling functions and wavelets, the above may be written as
\begin{align}
x_{j+1}-x_j
&=\Psi_\jpx \Psi_\jpx^*\left(x_{j+1}-\ctr_\jpx\right)+ w_\jpx -\Phi_\jx\Phi_\jx^*\sum_{l=j+1}^{J-1} Q_{\M_{l+1}}(x) \nonumber \\ %\left(I-\Phi_\jx\Phi_\jx^*\right)(\ctr_\jpx-\ctr_\jx)\\
&\quad -\Phi_\jx\Phi_\jx^*\left(x-x_J\right).
\label{e:QMj3}
\end{align}
This equation splits the difference $x_{j+1}-x_j$ into a component in $W_{j+1,x}$, a second component that only depends on the cell $(j+1,x)$ (but not on the point $x$ {\em{per se}}), accounting for the translation of centers and lying in the orthogonal complement of $V_{j,x}$ but not necessarily in $W_{j+1,x}$, and a sum of terms which are projections on $V_{j,x}$ of differences in the same form $x_{l+1}-x_l$, but at finer scales.
By construction we have the two-scale equation
\begin{equation}
P_{\M_{j+1}}(x) = P_{\M_{j}}(x) + Q_{\M_{j+1}}(x)\,, \quad x\in \M
\label{e:twoscaleeq}
\end{equation}
which can be iterated across scales, leading to a multi-scale decomposition along low-dimensional subspaces, with efficient encoding and algorithms.
%We have therefore constructed a multi-scale family of projection operators $P_{\M_j}$ (one for each $\C_\jk$) onto approximate tangent planes, and an associated multi-scale family of detail projection operators $Q_{\M_{j+1}}$.
We think of $P_\jk$ as being attached to the node $(\jk)$ of $\mathcal{T}$, and the $Q_\jpkp$ as being attached to the edge connecting the node $(\jpkp)$ to its parent.

We say that the set of multi-scale piecewise affine operators $\{P_{\M_j}\}$ and $\{Q_{\M_{j+1}}\}$ form a {\emph{Geometric Multi-Resolution Analysis}}, or GMRA for short.

%%
%% Higher order GMRA's
%%
%\subsection{Higher-Order GMRA's}
%The construction above may be generalized to the case where $P_{\M_j}$ is a projection onto a piecewise polynomial of degree $p>1$.
%In fact, it is enough to define $\Poly_p$ as the set of embeddings of the unit $\dimX$-dimensional cell into $\R^\dimamb$ with polynomials of degree at most $p$,
%and the replace the definition $\Vaff_\jk$ in \eqref{e:Vaff} with
%\begin{equation}
%\Vaff_\jk = \underset{\Pi\in\Poly_p}{\operatorname{argmin}} \int_{\C_\jk} ||x-\Paff_\Pi(x)||^2\,d\mu(x)\,.
%\label{e:Vaffho}
%\end{equation}
%where $\Paff_\Pi(x)=\argmin_{y\in\Pi} ||x-y||$.
%As we shall note later, $\Paff_\Pi(x)$ in \eqref{e:Vaffho} is a singleton when $j$ is large enough.
%
%MM: I need to think more about the $Q$'s in this case....

%
% Geometric Wavelet Decomposition
%
\subsection{Approximation for manifolds} \label{subsec: atheorem}
\label{subsec:atheorem}
We analyze the error of approximation to a $\dimX$-dimensional manifold in $\mathbb{R}^D$ by using geometric wavelets representation.
The following result fully explans of the examples in Sec.~\ref{subsec:manifold}.

\begin{thm}
\label{t:GWT}
Let $(\M,\rho,\mu)$ be a compact $\mathcal{C}^{1+\alpha}$ Riemannian manifold of dimension $\dimX$ isometrically embedded in $\R^\dimamb$, with $\alpha\in(0,1]$, and $\mu$ absolutely continuous with respect to the volume measure on $\M$.
Let $\{P_{\M_j},Q_{\M_{j+1}}\}$ be a GMRA for $(\M,\rho,\mu)$.
For any $x\in\M$, there exists a scale $j_0=j_0(x)$ such that for any $j\ge j_0$ and any $p>0$, if we let $d\mu_\jx:=\mu(\C_\jx)^{-1}d\mu$,
\begin{align}
\left\|\left\|z-P_{\M_j}(z)\right\|_{\R^\dimamb}\right\|_{L^p(\C_\jx,d\mu_\jx(z))}
&=\left\|\left\| z-P_{\M_{j_0}}(z)-\sum_{l=j_0}^{j-1} Q_{\M_{l+1}}(z) \right\|_{\R^\dimamb}\right\|_{L^p(\C_\jx,d\mu_\jx(z))} \nonumber \\
&\le
%\min\{\kappa_1^{\frac2p}\kappa_2^{1-\frac2p},\kappa_1,\kappa_2\}\cdot2^{-2j}+o(2^{-2j})
||\kappa||_{L^\infty(\C_\jx)} \,2^{-(1+\alpha)j}+o(2^{-(1+\alpha)j})\,.
\label{e:WD}
\end{align}
If $\alpha<1$, $\kappa(x)$ depends on the $\mathcal{C}^{1+\alpha}$ norm of a coordinate chart from $T_x(\M)$ to $\C_\jx\subseteq\M$.% and on $\left\|\frac{d\mu}{d\mathrm{vol}}\right\|_{L^\infty(\C_\jx)}$.

If $\alpha=1$, $%\begin{equation}
               %\kappa(x)=\left\|\frac{d\mu}{d\mathrm{vol}}\right\|_{L^\infty(\C_\jx)}\min(\kappa_1(x),\kappa_2(x))\,,
               \kappa(x)=\min(\kappa_1(x),\kappa_2(x))\,,
              $%\end{equation}
with
\begin{align}
\kappa_1(x) &:= \frac{1}{2}\max_{i\in\{1,\dots,\dimamb-\dimX\}} ||H_i(x)||;\\
\kappa_2^2(x) &:= \max_{w\in\mathbb{S}^{\dimamb-\dimX}}\frac{\dimX(\dimX+1)}{4(\dimX+2)(\dimX+4)}\bigg[\left\|\sum_{l=1}^{\dimamb-\dimX}w_l H_l(x)\right\|^2_{F}-\frac1{\dimX+2}\left(\sum_{l=1}^{\dimamb-\dimX}w_l \mathrm{Tr}(H_l(x))\right)^2\bigg]\,,
\end{align}
and the $\dimamb-\dimX$ matrices $H_l(x)$ are the $\dimX$-dimensional Hessians of $\M$ at $x$.
\end{thm}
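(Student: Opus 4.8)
The equality in the first line of \eqref{e:WD} is nothing but the two--scale relation \eqref{e:twoscaleeq} iterated from scale $j_0$ to scale $j$, so the content is the inequality. For $z\in\C_\jx$ the points $z$ and $x$ lie in the same scale--$j$ cell, hence $P_{\M_j}(z)=\Paff_\jx(z)$, and $\Paff_\jx$ is the orthogonal projection onto the best least--squares affine $\dimX$--plane $\Vaff_\jx$ by \eqref{e:Vaff}; so the task is to estimate the $L^p(\C_\jx,d\mu_\jx)$ norm of $z\mapsto\mathrm{dist}(z,\Vaff_\jx)$. I would fix $x$ and, for $j$ large, work in a local chart: since $\M$ is $\mathcal{C}^{1+\alpha}$, near the footpoint $c_\jx^*\in\M$ of the cell one writes $\M$ as a graph $u\mapsto(u,f(u))$ over the tangent space $T_{c_\jx^*}\M\cong\R^\dimX$ with $f(0)=0$, $Df(0)=0$, and either $\|f(u)\|\le C|u|^{1+\alpha}$, $C$ controlled by the $\mathcal{C}^{1+\alpha}$ norm of the chart (the case $\alpha<1$), or, for $\alpha=1$, $f(u)=\tfrac12\bigl(u^\top H_1u,\dots,u^\top H_{\dimamb-\dimX}u\bigr)+o(|u|^2)$ with the $H_l$ the Hessians of $\M$. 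By property (iv) of the tree decomposition, absolute continuity of $\mu$ with respect to volume, and the fact that volume is Euclidean--Lebesgue to leading order at small scale, I would model the measure $\mu_\jx$ read in the $u$--coordinate, up to factors $1+o(1)$, by the uniform probability measure on a Euclidean ball $\ball{\dimX}$ of radius $r=\Theta(2^{-j})$, with $r\le 2^{-j}$.

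When $\alpha<1$ this already concludes: pointwise $\mathrm{dist}(z,\Vaff_\jx)\le\mathrm{dist}(z,T_{c_\jx^*}\M)=\|f(u)\|\le C|u|^{1+\alpha}\le C\,2^{-(1+\alpha)j}$, hence the same bound in every $L^p$, with $C=\kappa(x)(1+o(1))$. When $\alpha=1$ I would produce two bounds and retain the smaller. The first, giving $\kappa_1$, is again the tangent--plane comparison: $\mathrm{dist}(z,T_{c_\jx^*}\M)=\|f(u)\|\le\tfrac12\max_l\|H_l\|\,|u|^2+o(|u|^2)\le\|\kappa_1\|_{L^\infty(\C_\jx)}2^{-2j}+o(2^{-2j})$, a pointwise estimate and so valid for all $p$; here one still has to check that replacing the tangent plane by the genuinely optimal plane $\Vaff_\jx$ costs at most a lower--order term, which is where this crude argument is lossy and which motivates the second bound.

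The second bound, giving $\kappa_2$, comes from carrying out the minimization \eqref{e:Vaff} to leading order in $r$. Parametrize a competitor plane as the graph $u\mapsto Au+b$ of an affine map over the tangent directions; then $\|(u,f(u))-\Paff_\Pi(u,f(u))\|^2=\|f(u)-Au-b\|^2+o(r^4)$ uniformly for $\Pi$ near the optimum, so one minimizes $\E_u\|\tfrac12(u^\top H_1u,\dots,u^\top H_{\dimamb-\dimX}u)-Au-b\|^2$ over $A,b$, with $u$ uniform on $\ball{\dimX}$ of radius $r$. Parity in $u$ forces $A=0$ and identifies $b$ with the mean of $\tfrac12(u^\top H_lu)_l$, and the minimum equals $\tfrac14\sum_l\Var_u(u^\top H_lu)$; evaluating it via $\E[u_iu_j]=\tfrac{r^2}{\dimX+2}\delta_{ij}$ and $\E[u_iu_ju_ku_l]=\tfrac{r^4}{(\dimX+2)(\dimX+4)}(\delta_{ij}\delta_{kl}+\delta_{ik}\delta_{jl}+\delta_{il}\delta_{jk})$ turns it into a quadratic form in the Hessians whose value, after passing to the worst normal direction $w\in\sphere{\dimamb-\dimX}$ and inserting $r\le2^{-j}$, is $\kappa_2(x)^2\,2^{-4j}$, the coefficient $\tfrac{\dimX(\dimX+1)}{4(\dimX+2)(\dimX+4)}$ and the subtracted trace term emerging from these computations. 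This is naturally an $L^2$ statement; $p\le2$ follows by Jensen, and for $p>2$ I would rescale $u=r\hat u$ and use that the error function is, to leading order, the fixed bounded profile $\hat u\mapsto\|\tfrac12(\hat u^\top H_l\hat u)_l-\hat b\|$ on the unit ball, whose $L^p$ norms against the uniform measure are all controlled by one constant.

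The phrase ``there exists $j_0=j_0(x)$'' together with the $o(2^{-(1+\alpha)j})$ term absorbs, once $j$ is large, the discrepancies between $\mu_\jx$ (read in the chart) and the uniform measure on a ball, between $\C_\jx$ and a Euclidean ball, between $f$ and its Taylor polynomial, and the $O(\mathrm{curv}\cdot2^{-2j})$ displacement between the centroid $\cjk$ about which $\Vaff_\jk$ is centered and the footpoint $c_\jx^*$; each is quantified and shown $o(2^{-(1+\alpha)j})$, with $\reach(\M)$ (or its local analogues) used only to make these estimates uniform. The hard part will be the sharp--constant part of the $\alpha=1$ case: one must show that the minimized least--squares functional has leading term exactly the claimed expression, which requires establishing that the optimal plane is a genuine $o(1)$ perturbation of the tangent plane (so the $o(r^4)$ remainder above is legitimate uniformly over competitor planes near the optimum), propagating the near--uniformity of $\mu$ through the fourth--moment computation without degrading the constant, and upgrading from $L^2$ to all $p$ with a single error term. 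These steps are soft in nature but delicate precisely because the claim concerns the constant multiplying $2^{-(1+\alpha)j}$ and not merely the exponent.
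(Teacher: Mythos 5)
Your outline follows essentially the same skeleton as the paper's proof: telescope the two-scale equation \eqref{e:twoscaleeq} for the equality, write $\C_\jx$ as a $\mathcal{C}^{1+\alpha}$ graph over a tangent plane, get the $\kappa_1$ bound by a pointwise comparison with that tangent plane, get the $\kappa_2$ bound from the least-squares optimality \eqref{e:Vaff} of $\Vaff_\jx$ together with moment computations for an approximately uniform measure on a $\dimX$-ball of radius $\sim 2^{-j}$, and then pass to general $p$. The differences are in how the two constants are produced. For $\kappa_2$, the paper does not redo the minimization: it uses the identity (as in \eqref{e:Errj}) that the local $L^2$ error equals $\sum_{l\ge \dimX+1}\lambda_l(\cov_\jx)$, bounds this tail by $\tfrac{\dimX(\dimX+1)}{2}\,\lambda_{\dimX+1}(\cov_\jx)+o(2^{-4j})$ (the normal covariance has rank at most $\dimX(\dimX+1)/2$ to leading order), and imports the asymptotics of $\lambda_{\dimX+1}$ from the companion paper \cite{LMR:MGM1}; that is exactly how the $\max_{w}$ and the prefactor $\tfrac{\dimX(\dimX+1)}{4(\dimX+2)(\dimX+4)}$ arise. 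Your direct computation instead yields the exact leading term $\tfrac14\sum_l \Var_u(u^\top H_l u)$, i.e.\ the \emph{sum} over normal directions rather than $\tfrac{\dimX(\dimX+1)}{2}\max_w(\cdot)$; this is fine (indeed sharper), but to land on the stated $\kappa_2$ you still need the same rank observation to dominate your constant by the theorem's, so make that step explicit. For general $p$ the paper simply interpolates between its $L^2$ and $L^\infty$ estimates, whereas you use Jensen for $p\le 2$ and a rescaling argument for $p>2$; both work at this level of precision. One caveat: your claim that replacing the tangent plane by $\Vaff_\jx$ in the pointwise/$\kappa_1$ bound ``costs at most a lower-order term'' is too optimistic, since $\Vaff_\jx$ passes through the centroid $\ctr_\jx$, whose offset from the manifold is itself of order $2^{-2j}$ when $\alpha=1$; the paper handles this by the triangle inequality and its own $L^\infty$ estimate consequently carries a factor $2$ in front of $\kappa$, so this is a shared looseness rather than a gap specific to your argument, but it should be stated as a same-order term to be controlled, not absorbed into the $o(2^{-(1+\alpha)j})$.
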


This theorem describes the asymptotic decay of the geometric wavelet coefficients as a function of scale, and in particular it implies the compressibility of such coefficients.
The decay depends on the smoothness of the manifold, and for $\mathcal{C}^2$ manifolds it is quadratic in the scale; it saturates at $\mathcal{C}^2$, and for smoother manifolds we would have to use higher order geometric wavelets. We do not consider them here as the data sets we consider do not seem to benefit from higher order constructions.
More quantitatively, the asymptotic rate is affected by the constant $\kappa$, which combines the distortion of $d\mu$ compared to the volume measure, and a notion of $L^2$ curvature.
Depending on the size of $\kappa$, which in general varies from location to location, it gives an error estimate for an adaptive thresholding scheme that would threshold small coefficients in the geometric wavelet expansion (see the third example in Section \ref{subsec:manifold}).

Observe that $\kappa_2$ can be smaller than $\kappa_1$ (by a constant factor) or larger (by factors depending on $\dimX^2$), depending on the spectral properties and commutativity relations between the Hessians $H_l$.
$\kappa_2^2$ may be unexpectedly small, in the sense that it may scale as $\dimX^{-2}r^4$ as a function of $\dimX$ and $r$, as observed in \cite{LMR:MGM1}, because of concentration of measure phenomena.

Finally, we note that similar bounds may be obtained in $L^p(\C_\jx,d\mathrm{vol})$ simply by changing measure from $d\mu$ to $d\mathrm{vol}$ and paying the price of replacing the constant $\kappa$ by $\left\|\frac{d\mu}{d\mathrm{vol}}\right\|_{L^\infty(\C_\jx)}\kappa$. This may also be achieved algorithmically with simple standard renormalizations (e.g. \cite{DiffusionPNAS}).

The proof is postponed to the Appendix.

It is clear how to generalize the Theorem to unions of manifolds with generic intersections, at scales small enough around a point so that $\C_\jx$ does not include intersections. Moreover, since the results are local, sets more general than manifolds may be considered as well: this is subject of a future report.

%
% Approximation error measures
%
\subsection{Non-manifold data and measures of approximation error}
When constructing a GMRA for point-cloud data not sampled from manifolds, we may choose the dimension $d_\jk$ of the local linear approximating plane $\Vaff_\jk$ by a criterion based on local approximation errors.
Note that this affects neither the construction of geometric scaling functions, nor that of the wavelet subspaces and bases.
%Therefore, the use of locally adaptive dimensions (instead of a fixed dimension) allows the construction of geometric wavelets on any point-cloud data.

A simple measure for absolute error of approximation at scale $j$ is:
\begin{align}
\Err_j^2
&=\int_{\M} ||P_{\M_j}(x)-x||_{\mathbb{R}^\dimamb}^2\, d\mu(x)
=\sum_{k\in\mathcal{K}_j} \int_{\Cjk} ||P_{j,k}(x)-x||_{\mathbb{R}^\dimamb}^2\, d\mu|_{\Cjk}(x) \nonumber \\
&=\sum_{k\in\mathcal{K}_j}\mu(\Cjk)\frac{1}{\mu(\Cjk)}\int_{\Cjk} ||P_\jk(x)-x||_{\mathbb{R}^\dimamb}^2\, d\mu|_{\Cjk}(x) \nonumber \\
&=\sum_{k\in\mathcal{K}_j}\mu(\Cjk) \sum_{l\ge d_\jk+1} \lambda_l(\cov_\jk).
\label{e:Errj}
\end{align}
We can therefore control $\Err_j$ by choosing $d_\jk$ based on the spectrum of $\cov_\jk$.
If we perform relative thresholding of $\cov_{j,k}$, i.e.~choose the smallest $d_{j,k}$ for which
\begin{equation}
\sum_{l\ge d_{j,k}+1} \lambda_l(\cov_{j,k})\le \epsilon_j\sum_{l\ge 1} \lambda_l(\cov_{j,k}),
\end{equation}
for some choice of $\epsilon_j$ (e.g.~$\epsilon_j=(c\theta^j)\vee\epsilon$ for some $\theta\in(0,1)$ and $\epsilon>0$), then we may upper bound the above as follows:
\begin{equation}
\Err_j^2\le\sum_{k\in\mathcal{K}_J}\mu(\C_{j,k})\epsilon_j ||\C_{j,k}||_F^2\le\epsilon_j |||\M|||_F\,,
\end{equation}
where $\C_\jk$ and $\M$ are thought of as matrices containing points in columns, and for a partitioned matrix $A=[A_1,A_2,\dots,A_r]$ and discrete probability measure $\mu$ on $\{1,\dots,r\}$ we define
\begin{equation}
|||A|||_F^2 := \sum_{i=1}^r \mu(\{i\}) ||A_i||_F^2.
\end{equation}
If we perform absolute thresholding of $\cov_{j,k}$, i.e.~choose the smallest $d_{j,k}$ for which $\sum_{l\ge d_{j,k}+1} \lambda_l(\cov_{j,k})\le\epsilon_j$, then we have the rough bound
\begin{align}
\Err_j^2
%&\le\sum_{k\in\mathcal{K}_J}\mu(\C_{J,k})\epsilon\left(\sum_{l\le d_{J,k}}\lambda_l^2(\cov(\C_{J,k}))\right)\\
&\le \sum_{k\in\mathcal{K}_j}\mu(\C_{j,k})\epsilon_j\le\epsilon_j\cdot \mu(\M).
\end{align}

%In order to obtain measures independent of the scaling of $\M$, we may simply divide $\Err_j$ by $|||\M_j|||_F$.

Of course, in the case of a $d$-dimensional $\mathcal{C}^2$ manifold $\M$ with volume measure, if we choose $d_{j,k}=d$, by Theorem \ref{t:GWT} we have
\begin{equation}
\Err_j\lesssim\sum_{k\in\mathcal{K}_j}\mu(\Cjk)||\curv||_\infty 2^{-2j} =  \mu(\M) ||\curv||_\infty 2^{-2j}.
\end{equation}

%
% ALGORITHMS
%
%gohere
\section{Algorithms} \label{sec:algorithm}

We present in this section algorithms implementing the construction of the GMRA and the corresponding Geometric Wavelet Transform (GWT).
% and discuss various computational issues such as dictionary size, coefficient sparsity, algorithmic complexity, in the setting of samples from a $d$-dimensional manifold $\M$; the more general case of point-cloud data (and the use of locally adaptive dimensions) is very similar in all respects.

%
% Construction of Geometric Wavelets, and GWT
%
\subsection{Construction of Geometric Multi-Resolution Analysis}
\label{s:ConstructionGMRA}

\begin{figure}[htbp]
\centering
\fbox{
\begin{minipage}[t]{0.95\columnwidth}
\small{\bf {\tt GMRA = GeometricMultiResolutionAnalysis}\,\,$(X_n,\tau_0,\epsilon)$}
\vspace{.25 cm}

// \small{{\bf Input:}}\\
// $X_n$: a set of $n$ samples from $\M$ \\
// $\tau_0$: some method for choosing local dimensions \\
// $\epsilon$: precision

\vskip 0.1cm

// \small{{\bf Output:}} \\
// A tree $\mathcal{T}$ of dyadic cells $\{\C_\jk\}$, their local means $\{\cjk\}$ and bases $\{\Phi_\jk\}$,\\
// together with a family of geometric wavelets $\{\Psi_\jk\},\{w_\jk\}$

\vskip 0.2cm

Construct the dyadic cells $\C_\jk$ with centers $\{\ctr_\jk\}$ and form a tree $\mathcal{T}$.
\vskip0.05cm
$J\leftarrow$ finest scale with the $\epsilon$-approximation property.
%\vskip 0.25cm
%W.l.o.g., let $J$ also be the height of $\mathcal{T}$.
\vskip0.05cm
Let $\cov_{J,k}=|C_{J,k}|^{-1}\sum_{x\in C_{J,k}} (x-\ctr_{J,k})(x-\ctr_{J,k})^*$, for $k\in \mathcal{K}_J$,
and compute $\mathrm{SVD}(\cov_{J,k})=\Phi_{J,k} \Sigma_{J,k} \Phi_{J,k}^*$ (where the dimension of $\Phi_{J,k}$ is determined by $\tau_0$).
\vskip0.05cm
{\bf for $j=J-1$ down to $0$}
\begin{enumerate}
  \item[] {\bf for $k\in\mathcal{K}_j$}
  \begin{itemize}
     \item[] Compute $\cov_\jk$ and $\Phi_\jk$ as above.
     \item[] For each $k'\in\children(j,k)$, construct the wavelet bases $\Psi_\jpkp$ and translations $w_\jpkp$, according to \eqref{e:twqffdef},\eqref{e:defWjpx}.
  \end{itemize}
  \item[] {\bf end}
\end{enumerate}
{\bf end}

\vskip0.05cm
For convenience, set $\Psi_{0,k}:=\Phi_{0,k}$ and $w_{0,k} :=\ctr_{0,k}$ for $k\in\mathcal{K}_0$.
\end{minipage}}
\caption{Pseudo-code for the construction of geometric wavelets}
\label{f:GMRAalgo}
\end{figure}

The first step in the construction of the geometric wavelets is to perform a geometric nested partition of the data set, forming a tree structure. For this end, one may consider various methods listed below:
\begin{itemize}
\item[(I).] Use of METIS \cite{KarypisSIAM99-METIS}: a multiscale variation of iterative spectral partitioning. We construct a weighted graph as done for the construction of diffusion maps \cite{DiffusionPNAS,CLAcha1}: we add an edge between each data point and its $k$ nearest neighbors, and assign to any such edge between $x_i$ and $x_j$ the weight $e^{-||x_i-x_j||^2 /\sigma}$. Here $k$ and $\sigma$ are parameters whose selection we do not discuss here (but see \cite{RZMC:ReactionCoordinatesLocalScaling} for a discussion in the context of molecular dynamics data). In practice, we choose $k$ between $10$ and $50$, and choose $\sigma$ adaptively at each point $x_i$ as the distance between $x_i$ and its $\lfloor k/2\rfloor$ nearest neighbor.
\item[(II).] Use of cover trees \cite{LangfordICML06-CoverTree}. %\footnote{code is available at \url{http://hunch.net/~jl/projects/cover_tree/cover_tree.html}}.
\item[(III).] Use of iterated PCA: at scale $1$, compute the top $\dimX$ principal components of data, and partition the data based on the sign of the $(\dimX+1)$-st singular vector. Repeat on each of the two partitions.
\item[(IV).] Iterated $k$-means: at scale $1$ partition the data based on $k$-means clustering, then iterate on each of the elements of the partition.
%\item[V.] For any $j\in\Z$, let $\mathcal{K}_j$ be a $2^{-j}$-net of points in $\M$, and $\{\Cjk\}_{k\in\mathcal{K}_j}$, be the family of associated Voronoi cells, i.e. $\Cjk:=\{ x\in\M : d_{\M}(x,x_{j,k})\le \min_{x_{j,k'}\in\mathcal{K}_j}d_{\M}(x,x_{j,k'})\}$. Let $c_{j,k}=\mean(\Cjk)$, where $\mean$ is taken in $\R^D$. Problems: lack of nestedness - can that be solved by a smart translation in the $Q$'s?
\end{itemize}
Each construction has pros and cons, in terms of performance and guarantees. For (I) we refer the reader to \cite{KarypisSIAM99-METIS}, for (II) to \cite{LangfordICML06-CoverTree} (which also discussed several other constructions), for (III) and (IV) to \cite{Szlam:iteratedpartitioning}. Only (II) guarantees the needed properties for the cells $\C_\jk$.
However constructed, we denote by $\{\C_\jk\}$ the family of resulting dyadic cells, and let $\mathcal{T}$ be the associated tree structure, as in Definition~\ref{d:dyadiccubes}.

%Pseudo-code for the algorithms to construct the geometric wavelets dictionary and computing the associated transforms is presented in Figs.~\ref{f:GMRAalgo}-\ref{f:IGWTalgo}.

In Fig.~\ref{f:GMRAalgo} we display pseudo-code for the construction of a GMRA for a data set $X_n$ given a precision $\epsilon>0$ and a method $\tau_0$ for choosing local dimensions (e.g., using thresholds or a fixed dimension). The code first constructs a family of multi-scale dyadic cells (with local centers $c_\jk$ and bases $\Phi_\jk$), and then computes the geometric wavelets $\Psi_\jk$ and translations $w_\jk$ at all scales. In practice, we use METIS~\cite{KarypisSIAM99-METIS} to construct a dyadic (not $2^d$-adic) tree $\mathcal{T}$ and the associated cells $\C_\jk$.

\subsection{The Fast Geometric Wavelet Transform and its Inverse}

\begin{figure}[htbp]
\centering
\fbox{
\begin{minipage}[t]{0.95\columnwidth}
\small{{\tt $\{q_\jx\}=$FGWT(GMRA$,x)$}}
\vspace{.1in}

// \small{{\bf Input:}} GMRA structure, $x\in\M$

// \small{{\bf Output:}} A sequence $\{q_\jx\}$ of wavelet coefficients

\vskip 0.2cm
$p_{J,x} = \Phi_{J,x}^*(x-\ctr_{J,x})$ \\
\noindent {\bf for $j=J$ down to $1$}
\begin{itemize}

%\item[]  $x_j = \Phi_\jx \Phi_\jx^*(x_J-\ctr_\jx) + \ctr_\jx$
\item[]  $q_\jx = (\Psi_\jx^*\Phi_\jx)\,p_\jx$
\item[]  $p_{j-1,x} = (\Phi_{j-1,x}^*\Phi_{J,x})\,p_{J,x} + \Phi^*_{j-1,x}(\ctr_{J,x}-\ctr_{j-1,x})$
%\item[]  $x = x - (\Psi_\jx\cdot \hat x\{j\}+w_\jx)$
\end{itemize}

{\bf end} \\
$q_{0,x}=p_{0,x}$ (for convenience)
%$\hat x\{0\} = \Phi_{0,x}^*(x-\ctr_{0,x})$

\end{minipage}}
\caption{Pseudo-code for the Forward Geometric Wavelet Transform}
\label{f:FGWTalgo}
\end{figure}

\begin{figure}[htbp]
\centering
\fbox{
\begin{minipage}[t]{0.95\columnwidth}
\small{{\tt $\hat x=$IGWT(GMRA,$\{q_\jx$\})}}
\vspace{.2cm}

// \small{{\bf Input:}} GMRA structure, wavelet coefficients $\{q_\jx\}$

// \small{{\bf Output:}} Approximation $\hat x$ at scale $J$
\vskip 0.2cm

%$x =  \Phi_{0,x} \cdot \hat x\{0\} + \ctr_{0,x}$
$Q_{J,x} = \Psi_{J,x} q_{J,x} + w_{J,x}$\\
{\bf for $j=J-1$ down to $1$}

\begin{enumerate}
\item[] $Q_j(x) = \Psi_\jx q_\jx  + w_\jx + \Phi_{j-1,x}\Phi_{j-1,x}^*\ \sum_{\ell>j}Q_{\ell}(x) $
\end{enumerate}

{\bf end}\\
$\hat x = \Psi_{0,x}q_{0,x}+w_{0,x}+\sum_{j>0} Q_j(x)$
%$x = x+\Phi_{(J,k)_x}\hat x_c(J)+x_{(J,k)_x}$
%{\bf end}
\end{minipage}
}
\caption{Pseudo-code for the Inverse Geometric Wavelet Transform}
\label{f:IGWTalgo}
\end{figure}

For simplicity of presentation, we shall assume $x=x_J$; otherwise, we may first project $x$ onto the local linear approximation of the cell $\C_{J,x}$ and use $x_J$ instead of $x$ from now on.
That is, we will define $x_{j;J} = P_{\M_j} (x_J)$, for all $j<J$, and encode the differences $x_{j+1;J}-x_{j;J}$ using the geometric wavelets. Note also that $\|x_{j;J}-x_j\| \le \|x-x_J\|$ at all scales.

The geometric scaling and wavelet coefficients $\{p_\jx\},\{q_\jpx\}$, for $j\ge 0$, of a point $x\in\M$ are chosen to satisfy the equations
\begin{align}
P_{\M_j}(x) &= \Phi_\jx p_\jx+ \ctr_\jx; \\
Q_{\M_{j+1}}(x) %&= \Qaff_\jpx(x_{j+1})- P_\jx\sum_{l=j+1}^{J-1} Q_{\M_{l+1}}(x) \\
		&= \Psi_\jpx q_\jpx +  w_\jpx-P_\jx\sum_{l=j+1}^{J-1} Q_{\M_{l+1}}(x).
\label{e:PQpq}
\end{align}

The computation of the coefficients, from fine to coarse, is simple and fast: since we assume $x=x_J$, we have
% \begin{equation} \label{e:xj1_expansion}
% x_{j+1} =\Phi_\jpx p_\jpx + \cjpx
% \end{equation}
% with $p_\jpx = \Phi_\jpx^*(x_{j+1}-\cjpx)$. %and $p_\jpx\in\R^\dimX$ is a vector of ``scaling coefficients''.
% Then, by \eqref{e:Pjkdef},
% \begin{equation}
% \begin{aligned}
% x_j &= \Phi_\jx \Phi_\jx^*(x_{j+1}-\cjkx)+\cjkx\\
% &= \Phi_\jx\Phi_\jx^*(\Phi_\jpx p_\jpx+\cjpx-\cjkx)+\cjkx\\
% %&=\Phi_\jx p_\jx+\cjkx,
% \end{aligned}
% \end{equation}
% and \eqref{e:PQpq} implies
\begin{align}
p_\jx &= \Phi_\jx^*(x_J-c_\jx) = \Phi_\jx^*(\Phi_{J,x}p_{J,x}+c_{J,x}-c_\jx) \nonumber \\
     &=\left(\Phi_\jx^*\Phi_{J,x}\right) p_{J,x} + \Phi_\jx^*(c_{J,x}-c_\jx).
\label{e:ajkajpkp}
\end{align}
Moreover the wavelet coefficients $q_\jpx$ (defined in \eqref{e:PQpq}) are obtained from \eqref{e:QMj3}:
\begin{equation}
\begin{aligned}
q_\jpx =\Psi_\jpx^*(x_{j+1}-c_\jpx) = \left(\Psi_\jpx^* \Phi_\jpx \right) p_\jpx.
%w_\jpx =(I-\Phi_\jx\Phi_\jx^*)t_\jpx\\
%q'_\jpx &= \sum_{l=j+1}^{J-1}\Phi_\jx^*Q_{\M_{l+1}}(x)
\end{aligned}
\label{e:wav_coeff_trans}
\end{equation}
Note that $\Phi_\jx^*\Phi_{J,x}$ and $\Psi_\jpx^*\Phi_\jpx$ are both small matrices (at most $\dimX_\jx\times \dimX_\jx$),
and are the only matrices we need to compute and store (once for all, and only up to a specified precision) in order to compute all the wavelet coefficients $q_\jpx$ and the scaling coefficients $p_\jx$,
given $p_{J,x}$ at the finest scale.

In Figs.~\ref{f:FGWTalgo} and \ref{f:IGWTalgo} we display pseudo-codes for the computation of the Forward and Inverse Geometric Wavelet Transforms (F/IGWT). The input to FGWT is a GMRA object, as returned by {\tt{GeometricMultiResolutionAnalysis}}, and a point $x\in \M$. %To compute the wavelet coefficients from fine to coarse scale, only the geometric wavelet bases are needed, together with the associated wavelet translations and the net centers.
Its output is the wavelet coefficients of the point $x$ at all scales, which are then used by IGWT for reconstruction of the point at all scales. %The code also needs to know the corresponding wavelet bases and translations along the tree, as well as the net centers.

%This is clear for $p_\jx$ from \eqref{e:ajkajpkp}, for $q_\jpx$ from \eqref{e:wav_coeff_trans} and for the $q'_\jpx$ it is enough to observe in \eqref{e:wav_coeff_trans} that since the algorithm runs from $J$ down to $j+1$, we have already encoded the terms $Q_{\M_{l+1}}(x)$ appearing in the equation for $q'_\jpx$. This is made apparent in Table \ref{t:trangPQ} (which table???).
For any $x\in\M_J$, the set of coefficients
\begin{equation}
q_x=\left(q_{J,x}; q_{J-1, x}; \ldots ; q_{1, x}; p_{0,x}\right)% \in\R^{d+\sum_{j=1}^J \dimX^w_\jx}
\label{e:waveletcoeffs}
\end{equation}
is called the discrete {\em{geometric wavelet transform}} (GWT) of $x$.
Letting $\dimX^w_\jx = \rank(\Psi_\jpx)$, the length of the transform is $\dimX+\sum_{j>0} d^w_\jx$,
which is bounded by $(J+1)\dimX$ in the case of samples from a $d$-dimensional manifold (due to $d^w_\jx \le d$).

\begin{remark}
Note that for the variation of the GMRA without adding tangential corrections (see Sec.~\ref{subsec:notangent}), the algorithms above (as well as those in Sec. \ref{sec:OGMRA}) can be simplified.
First, in Fig.~\ref{f:GMRAalgo} we will not need to store the local bases functions $\{\Phi_\jk\}$.
Second, the steps in Figs.~\ref{f:FGWTalgo} and \ref{f:IGWTalgo} can be modified not to involve $\{\Phi_\jk\}$,
similarly as in Figs.~\ref{f:orthoFGWTalgo} and \ref{f:orthoIGWTalgo} of next section.
\end{remark}

%
% EXAMPLES
%
\section{Examples}\label{sec:examples}

We conduct numerical experiments in this section to demonstrate the performance of the algorithm (i.e., Figs.~\ref{f:GMRAalgo}, \ref{f:FGWTalgo}, \ref{f:IGWTalgo}).
%We will only use the basic version of the algorithm (i.e., Figs.~\ref{f:GMRAalgo}, \ref{f:FGWTalgo}, \ref{f:IGWTalgo}) throughout this section, except in Sec.~\ref{subsec:SVDcompare}
%where we include the variations or the optimizations discussed in Sec.~\ref{sec:variations}.
% For the manifold data below (Sec.~\ref{subsec:manifold}), we use the manifold dimension at each node of the tree for constructing scaling functions,
% and fix the precision to be $.001$ in each case;
% for the first two real data sets in Sec.~\ref{subsec:realdata}, we keep $50\%$ and $95\%$ of the variance, respectively, at the nonleaf and leaf nodes when constructing scaling functions.
% In Sec.~\ref{subsec:SVDcompare} we will  use different methods for choosing local dimensions.

\subsection{Low-dimensional smooth manifolds}\label{subsec:manifold}

% Swiss Roll
\begin{figure}[t]
\includegraphics[width=.32\columnwidth]{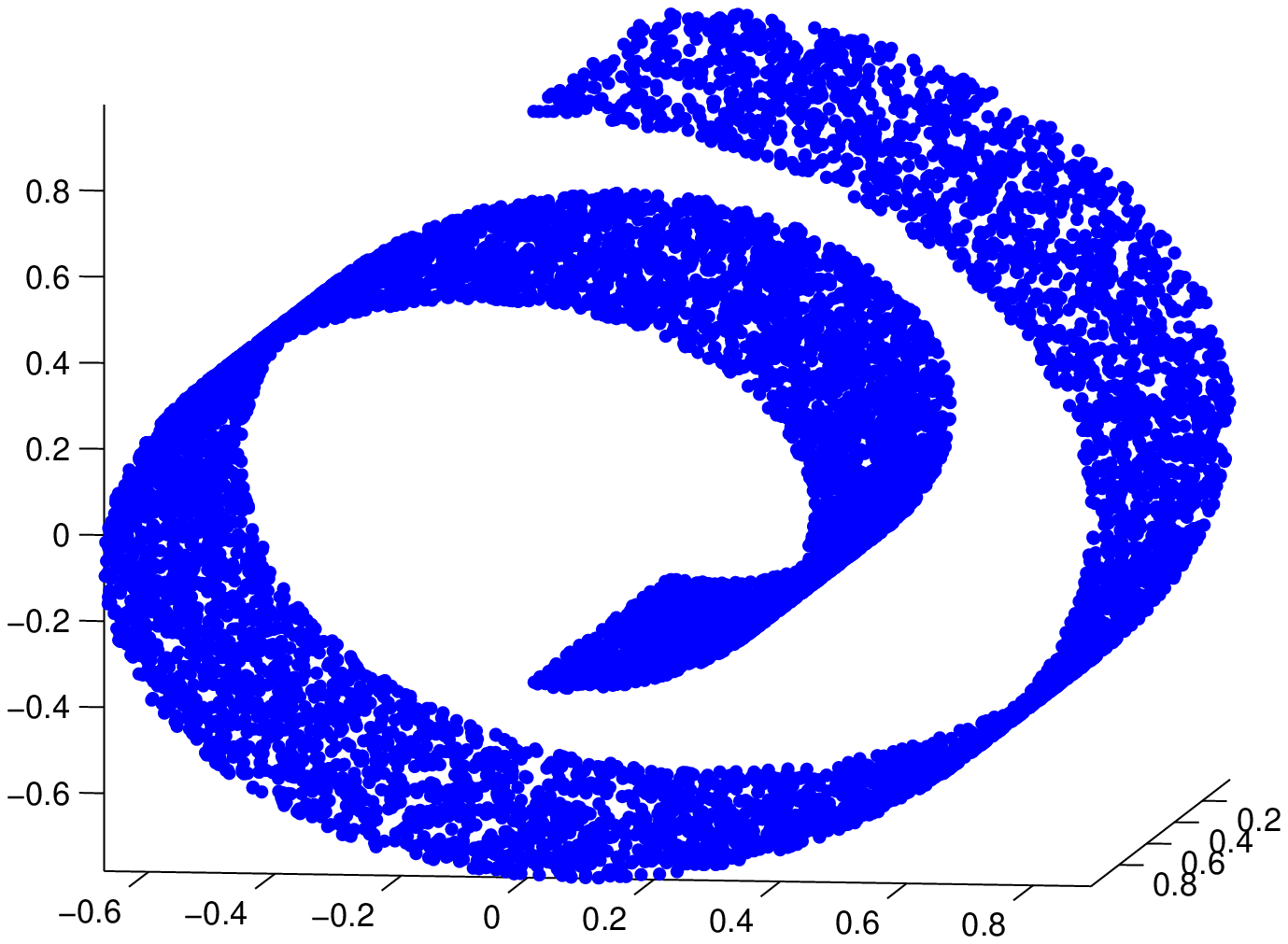}
\includegraphics[width=.32\columnwidth]{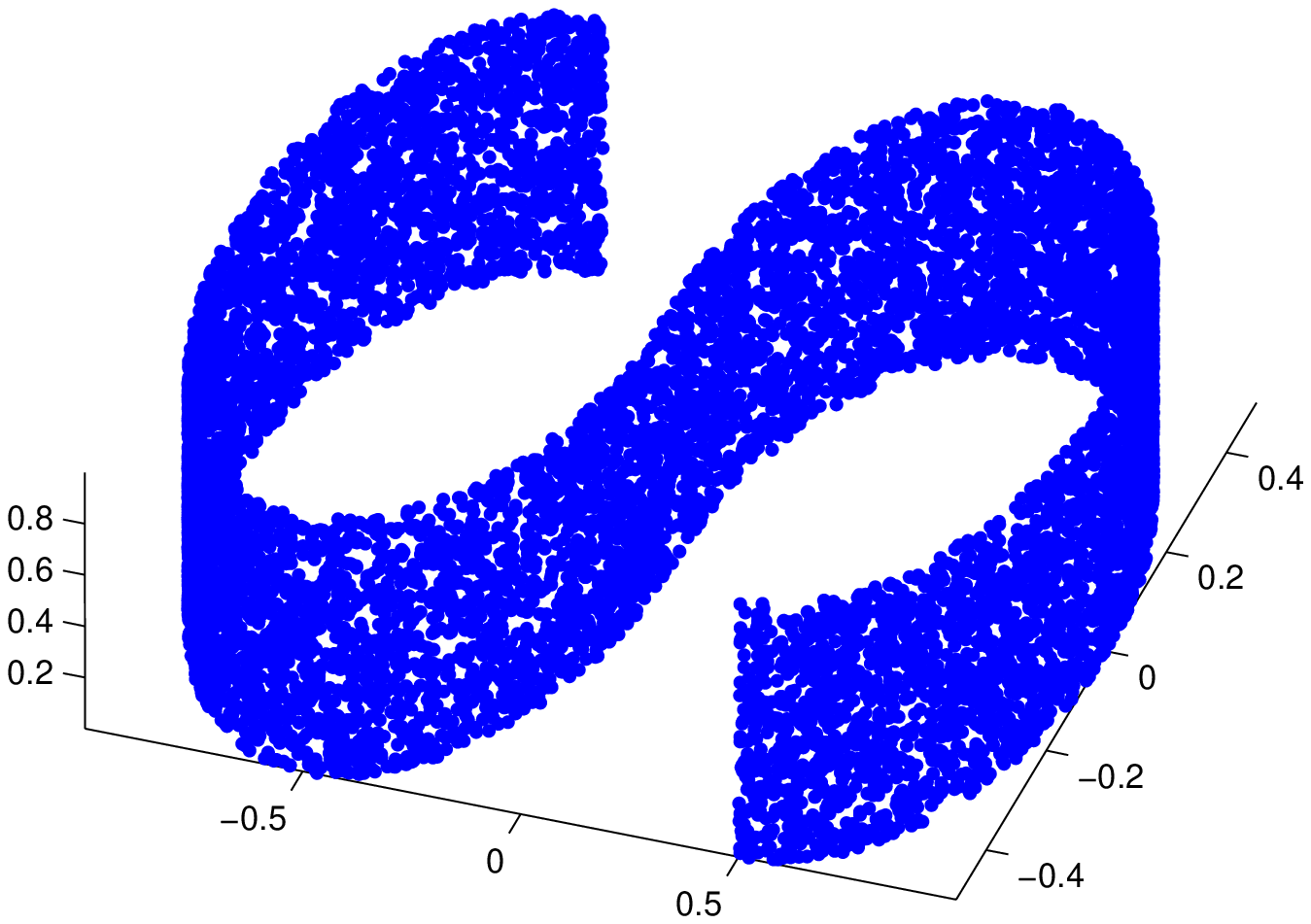}
\includegraphics[width=.32\columnwidth]{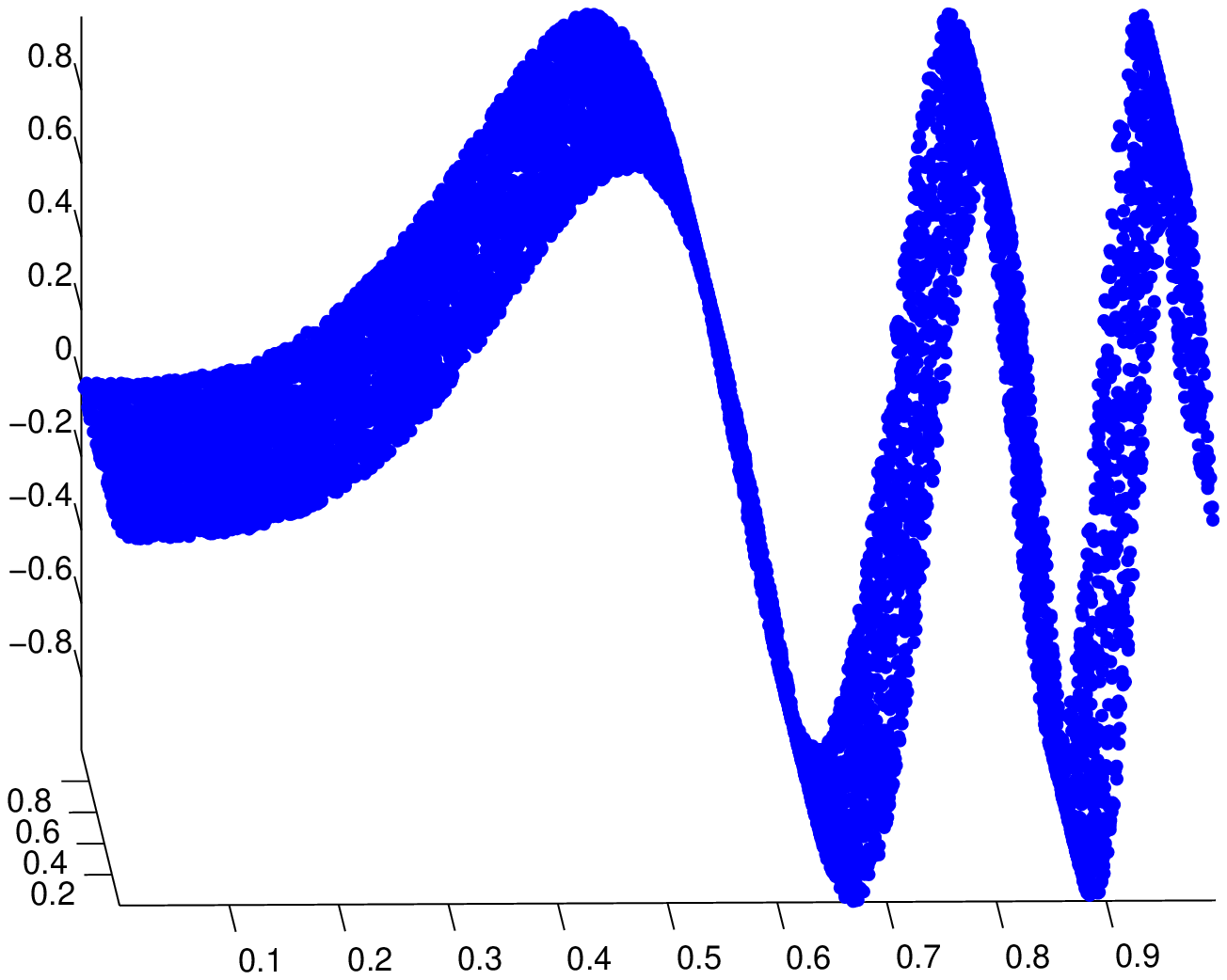}
\caption{Toy data sets for geometric wavelets transform.}
\label{f:toyData}
\end{figure}

To illustrate the construction presented so far, we consider simple synthetic datasets: a \textit{SwissRoll}, an \textit{S-Manifold} and an \textit{Oscillating2DWave},
all two-dimensional manifolds but embedded in $\mathbb{R}^{50}$ (see Fig.~\ref{f:toyData}).
We apply the algorithm to construct the GMRA and obtain the forward geometric wavelet transform of the sampled data (10000 points, without noise) in Fig.~\ref{f:FGWT_toyData}.
We use the manifold dimension $d_\jk=d=2$ at each node of the tree when constructing scaling functions, and choose the smallest finest scale for achieving an absolute precision $.001$ in each case.
We compute the average magnitude of the wavelet coefficients at each scale and plot it as a function of scale in Fig.~\ref{f:FGWT_toyData}.
The reconstructed manifolds obtained by the inverse geometric wavelets transform (at selected scales) are shown in Fig.~\ref{f:IGWT_toyData}, together with a plot of relative approximation errors,
\begin{equation}
\Err_{j,2}^{\mathrm{rel}}=\sqrt{\frac 1n\sum_{x\in X_n}\left(\frac{||x-P_{j,x}(x)||}{||x||}\right)^2}\,,%\frac{1}{\sqrt {\Var(X_n)}}\sqrt{\frac 1n\sum_{x\in X_n}\left(\frac{||x-P_{j,x}(x)||}{||x||}\right)^2}\,,
\label{e:Errjinftyrel}
\end{equation}
where $X_n$ is the training data of $n$ samples.
Both the approximation error and the magnitude of the wavelet coefficients decrease quadratically with respect to scale as expected.

\begin{figure}
\includegraphics[width=.32\columnwidth]{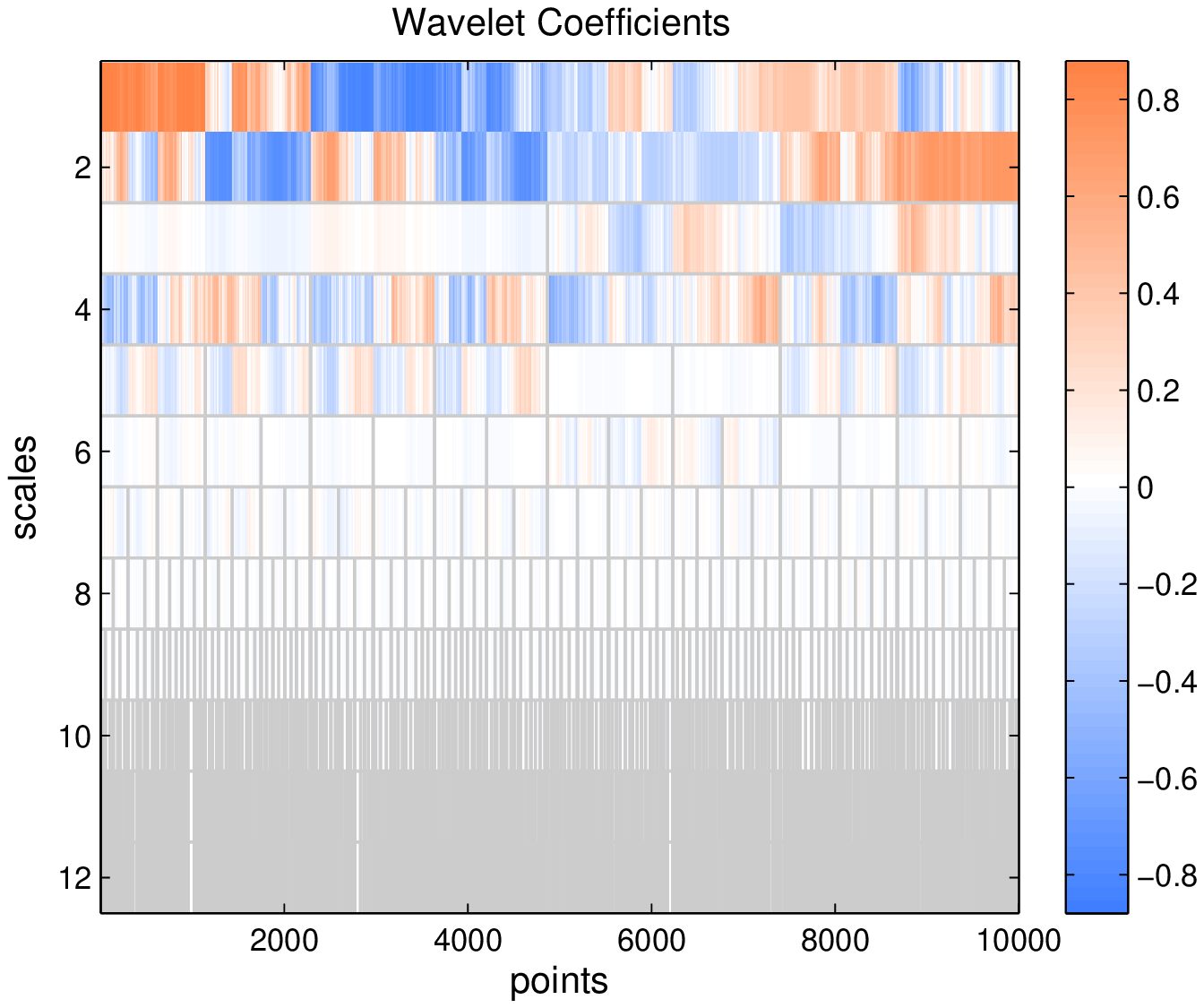}
\includegraphics[width=.32\columnwidth]{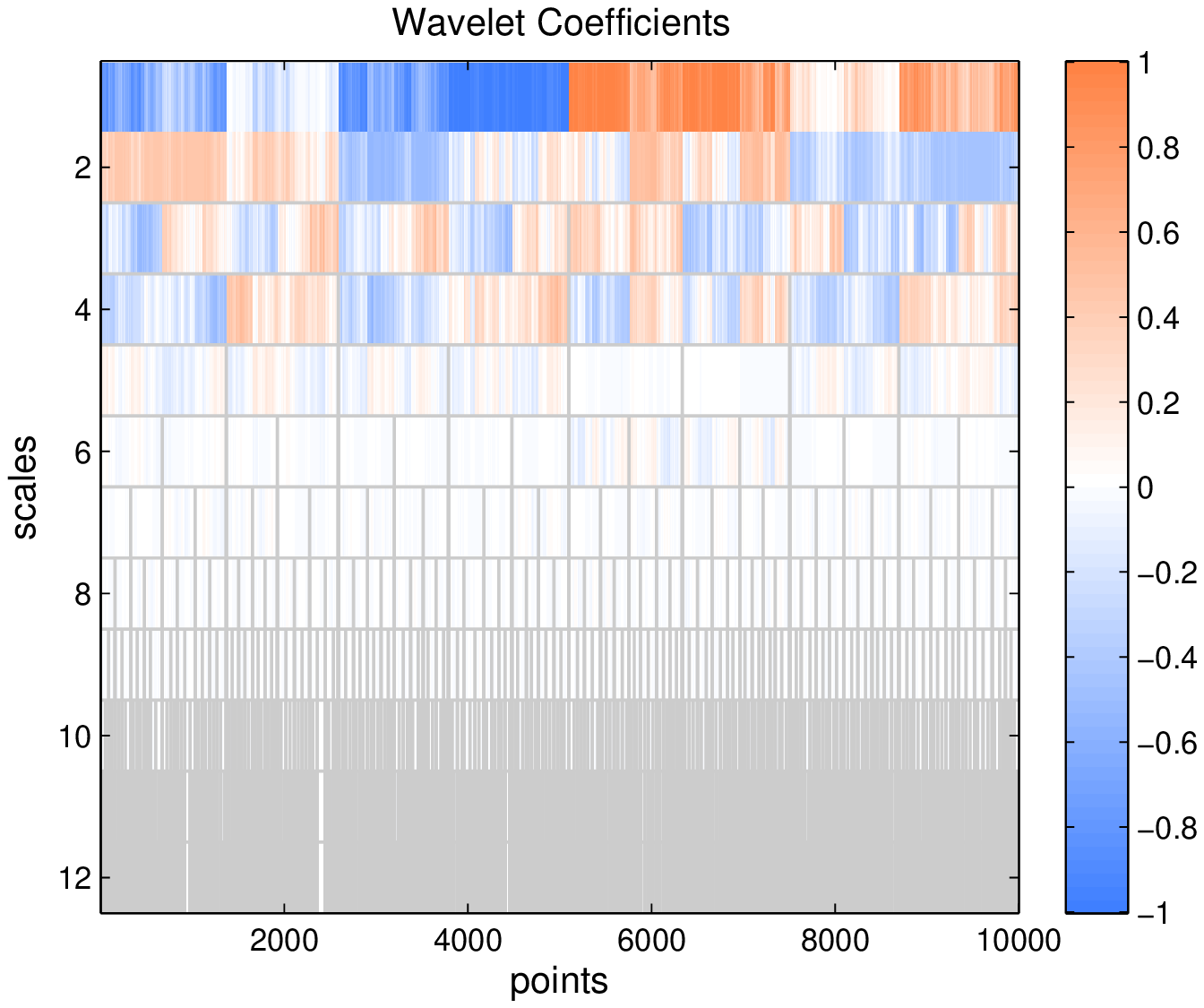}
\includegraphics[width=.32\columnwidth]{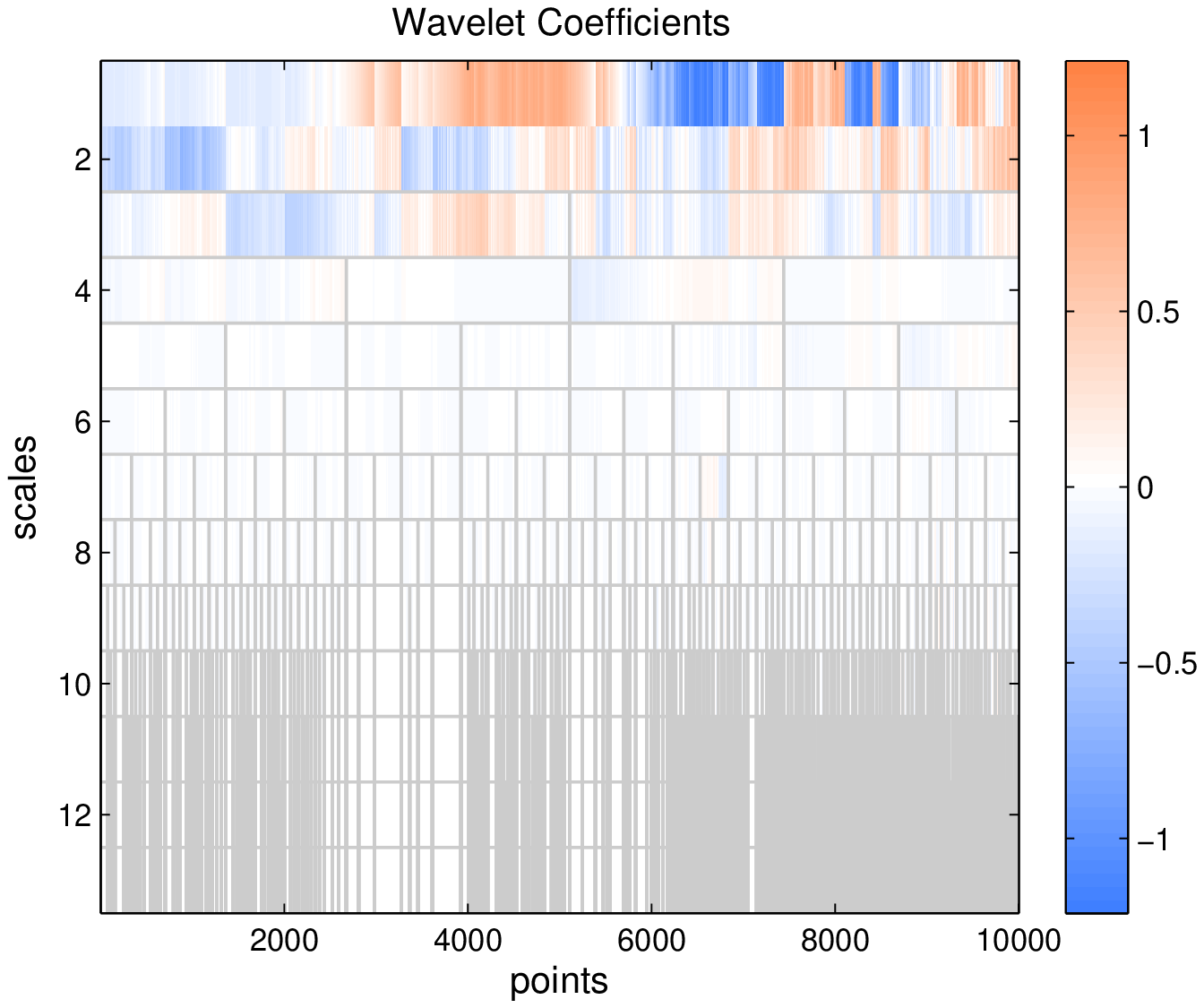} \\
\includegraphics[width=.32\columnwidth]{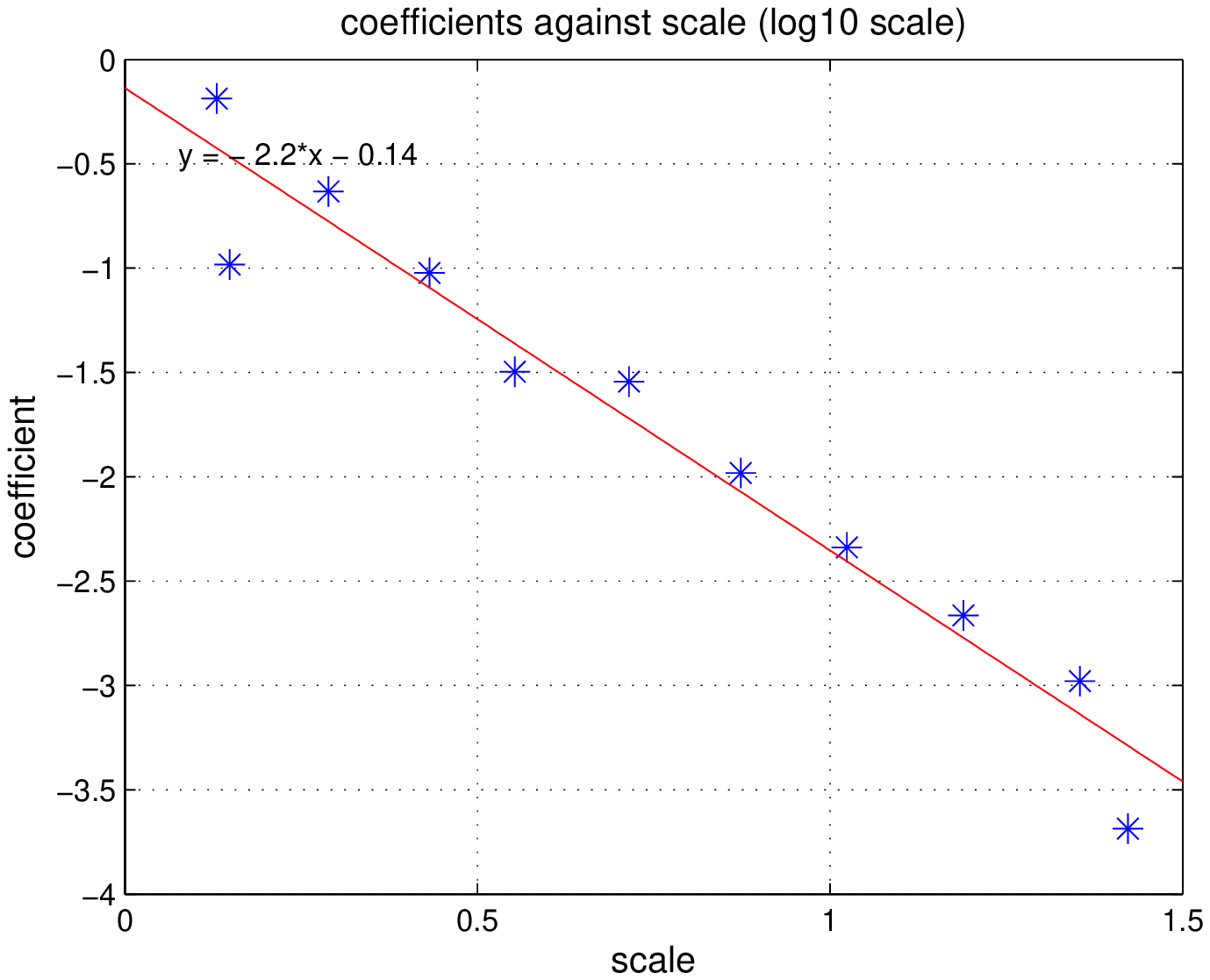}
\includegraphics[width=.32\columnwidth]{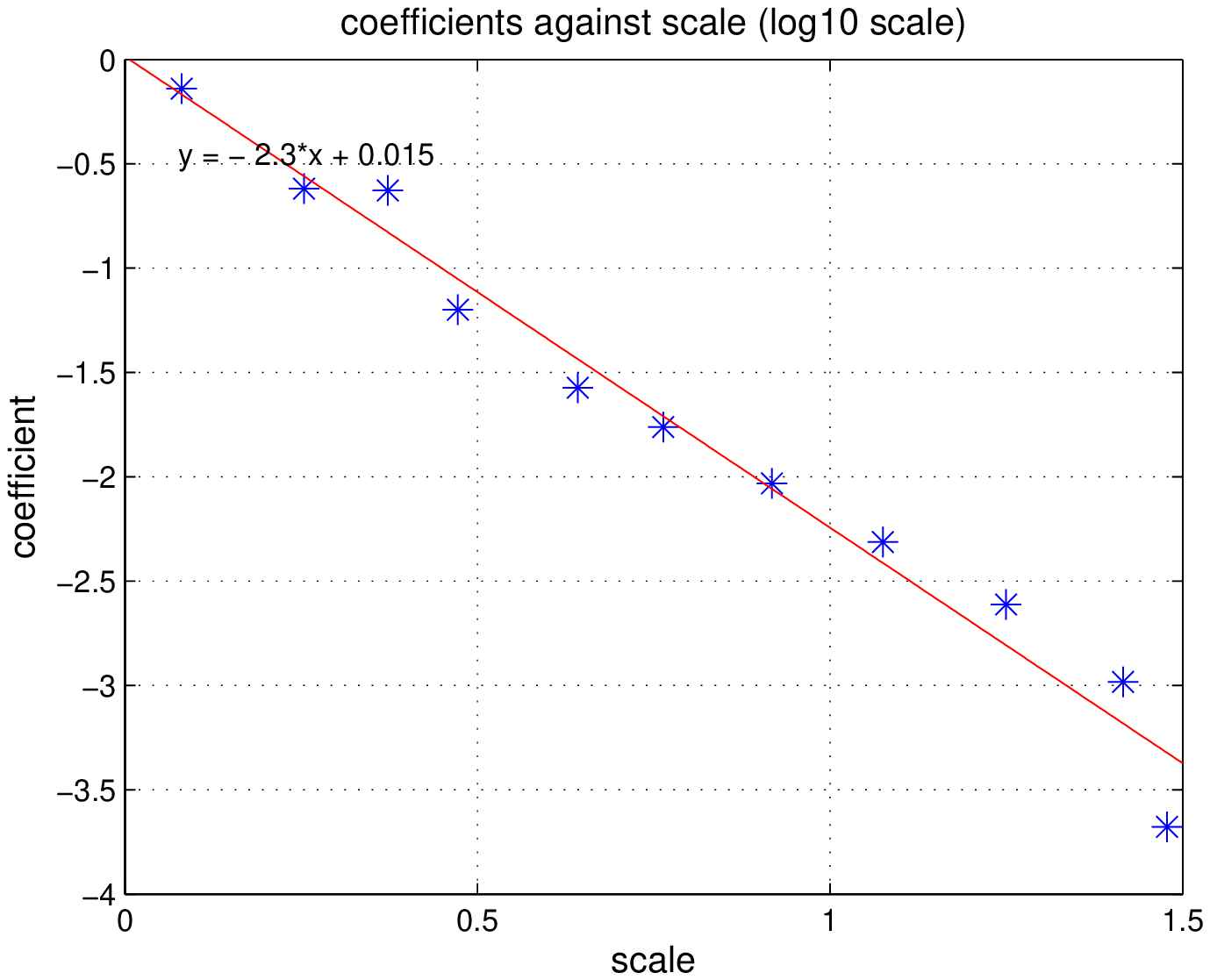}
\includegraphics[width=.32\columnwidth]{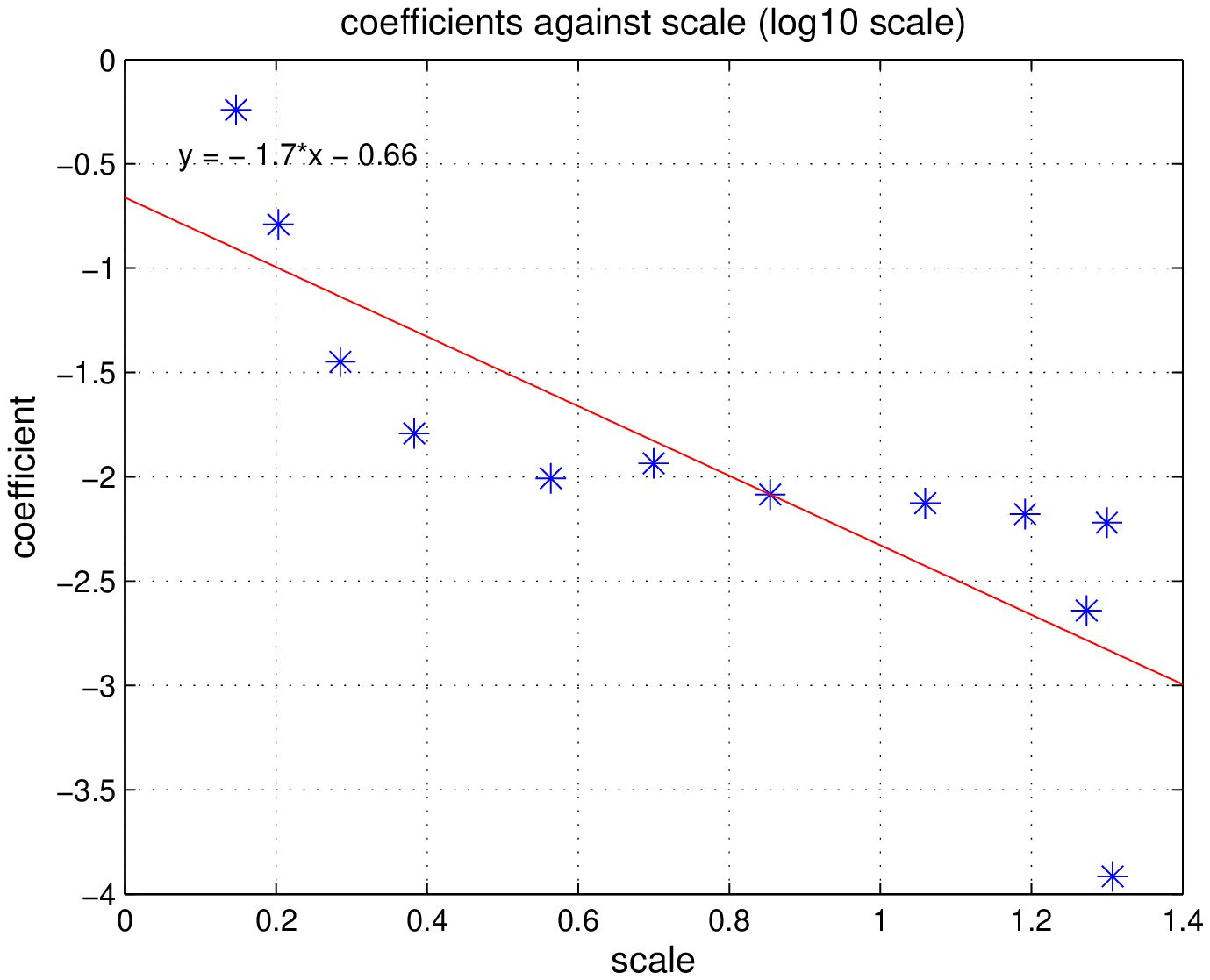}
\caption{Top row: Wavelet coefficients obtained by the algorithm for the three data sets in Fig.~\ref{f:toyData}.
The horizontal axis indexes the points (arranged according to the tree), and the vertical axis multi-indexes the wavelet coefficients, from coarse (top) to fine (bottom) scales: the block of entries $(x,j), x\in \C_\jk$ displays $\log_{10} |q_{j,x}|$, where $q_{j,x}$ is the vector of geometric wavelet coefficients of $x$ at scale $j$ (see Sec.~\ref{sec:algorithm}). In particular, each row indexes multiple wavelet elements, one for each $k\in\mathcal{K}_j$.
Bottom row: magnitude of wavelet coefficients decreasing quadratically as a function of scale. }
\label{f:FGWT_toyData}
\end{figure}

% Swiss Roll
\begin{figure}
\centering
\includegraphics[width=.32\columnwidth]{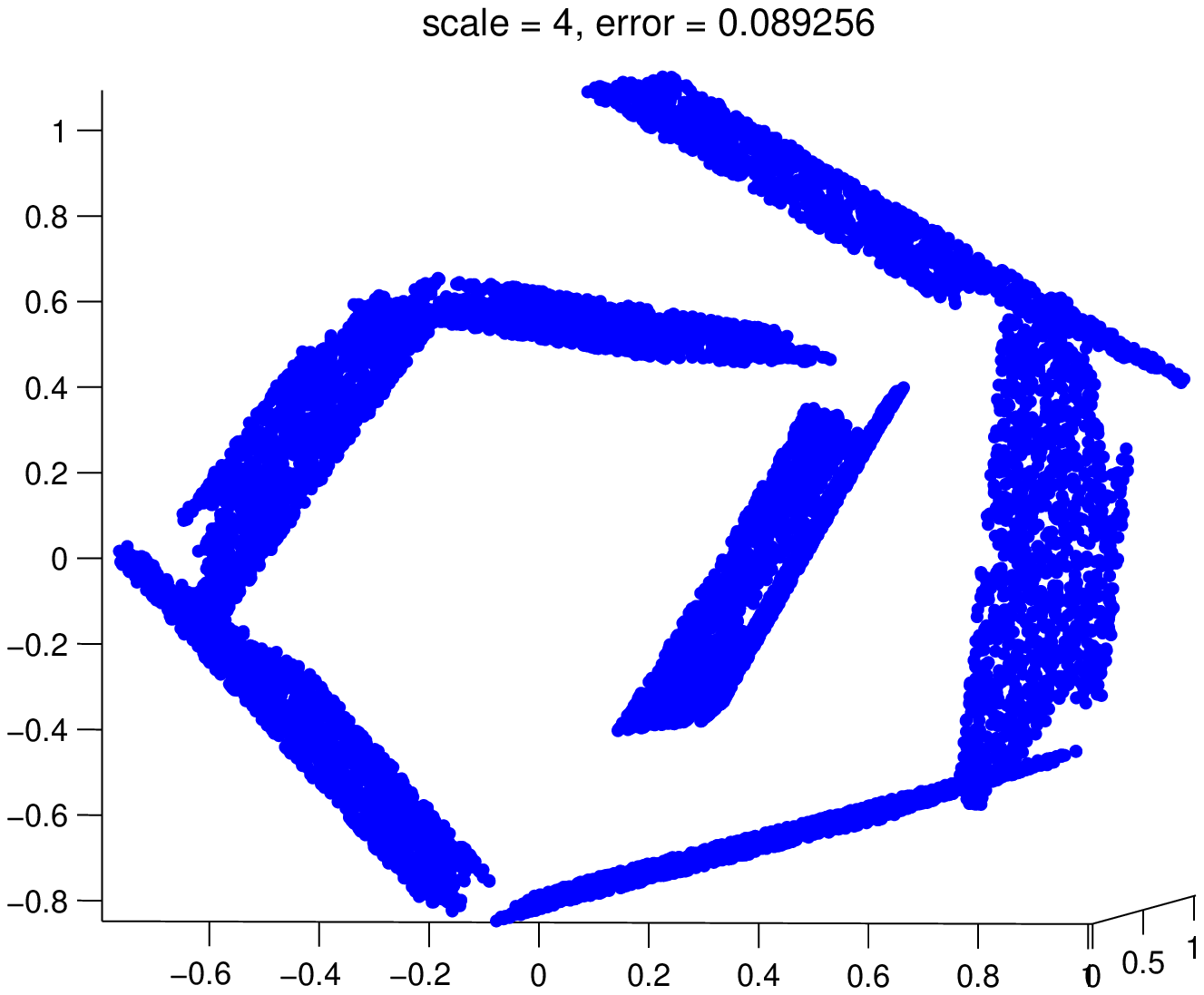}
\includegraphics[width=.32\columnwidth]{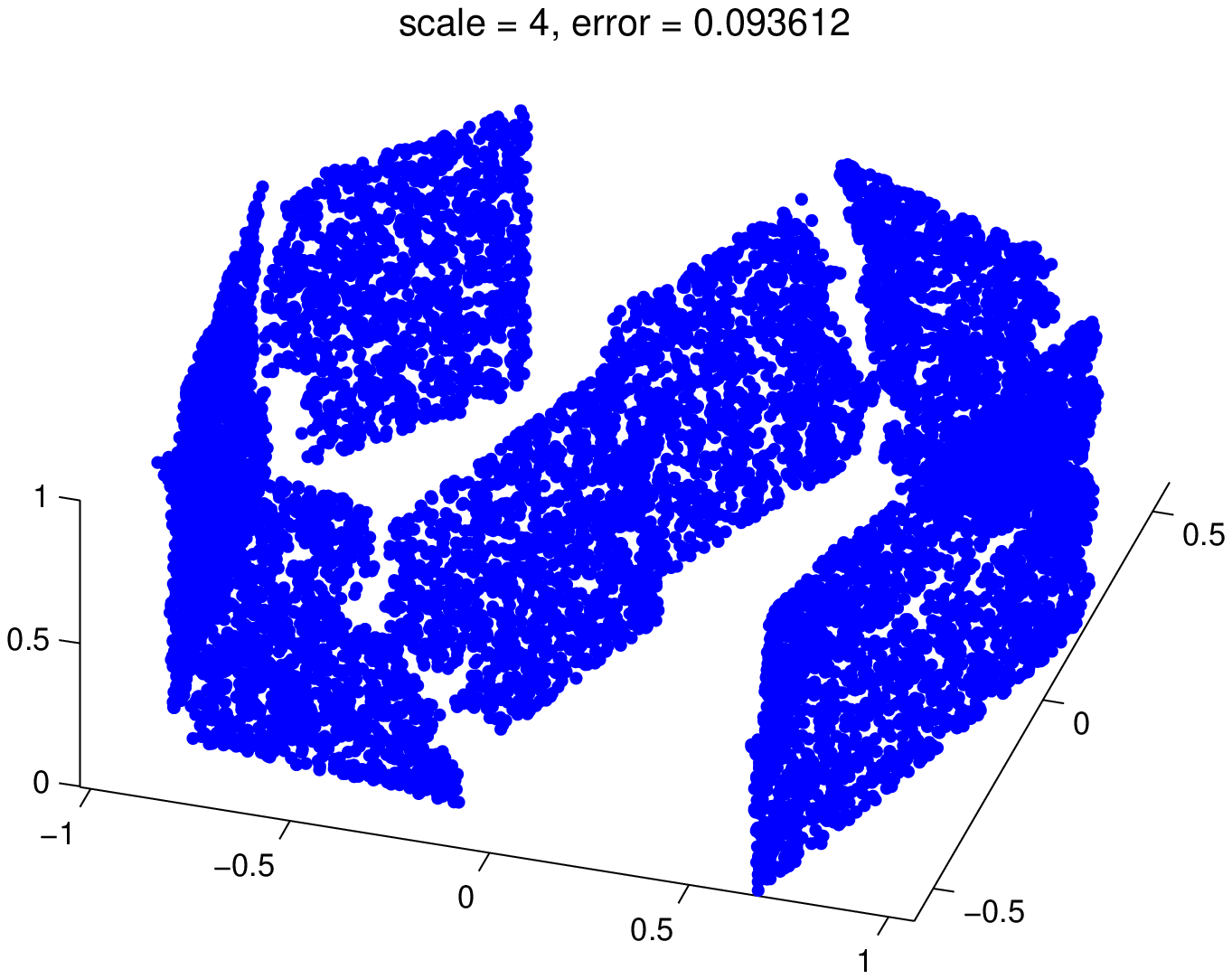}
\includegraphics[width=.32\columnwidth]{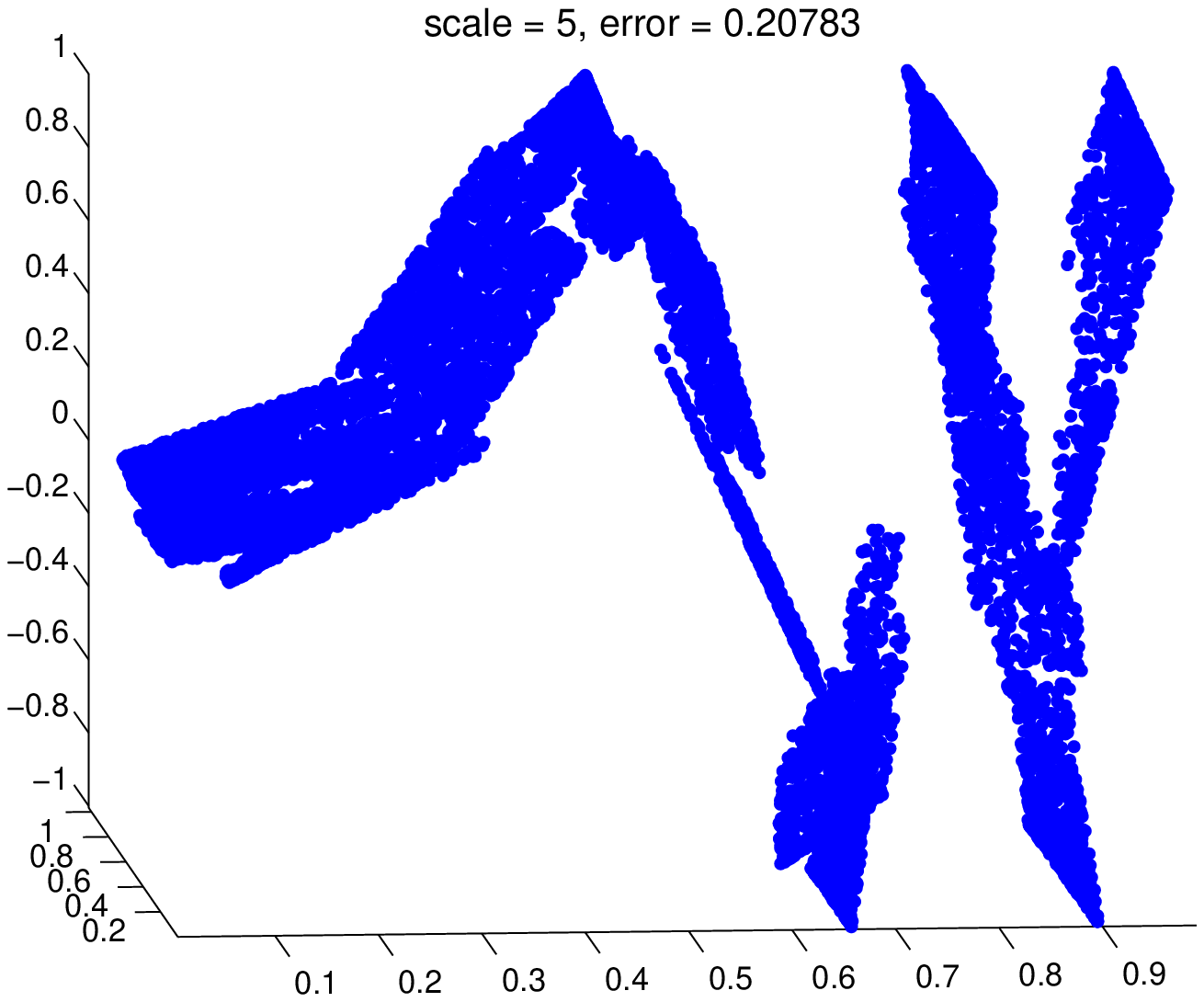}
\\
\includegraphics[width=.32\columnwidth]{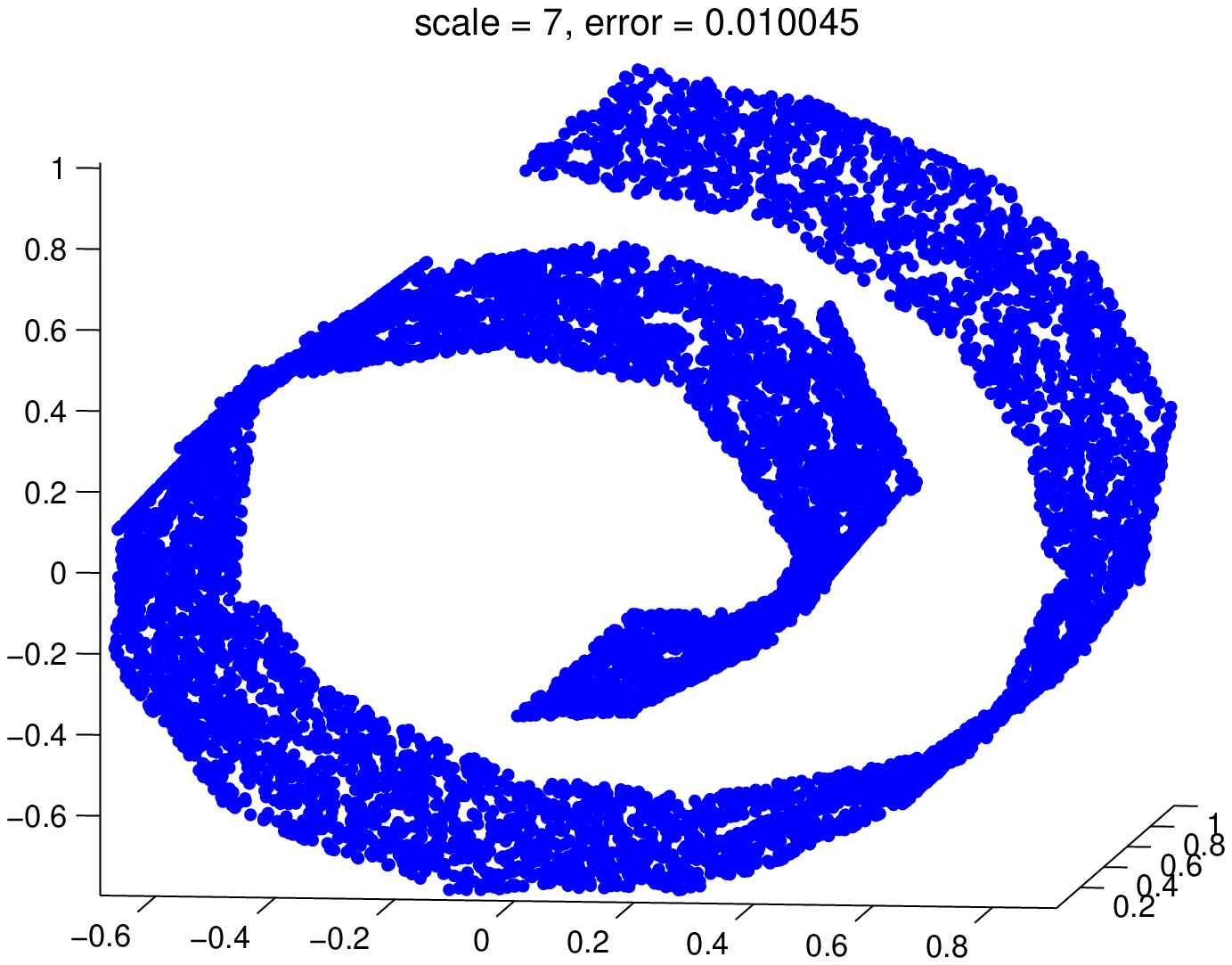}
\includegraphics[width=.32\columnwidth]{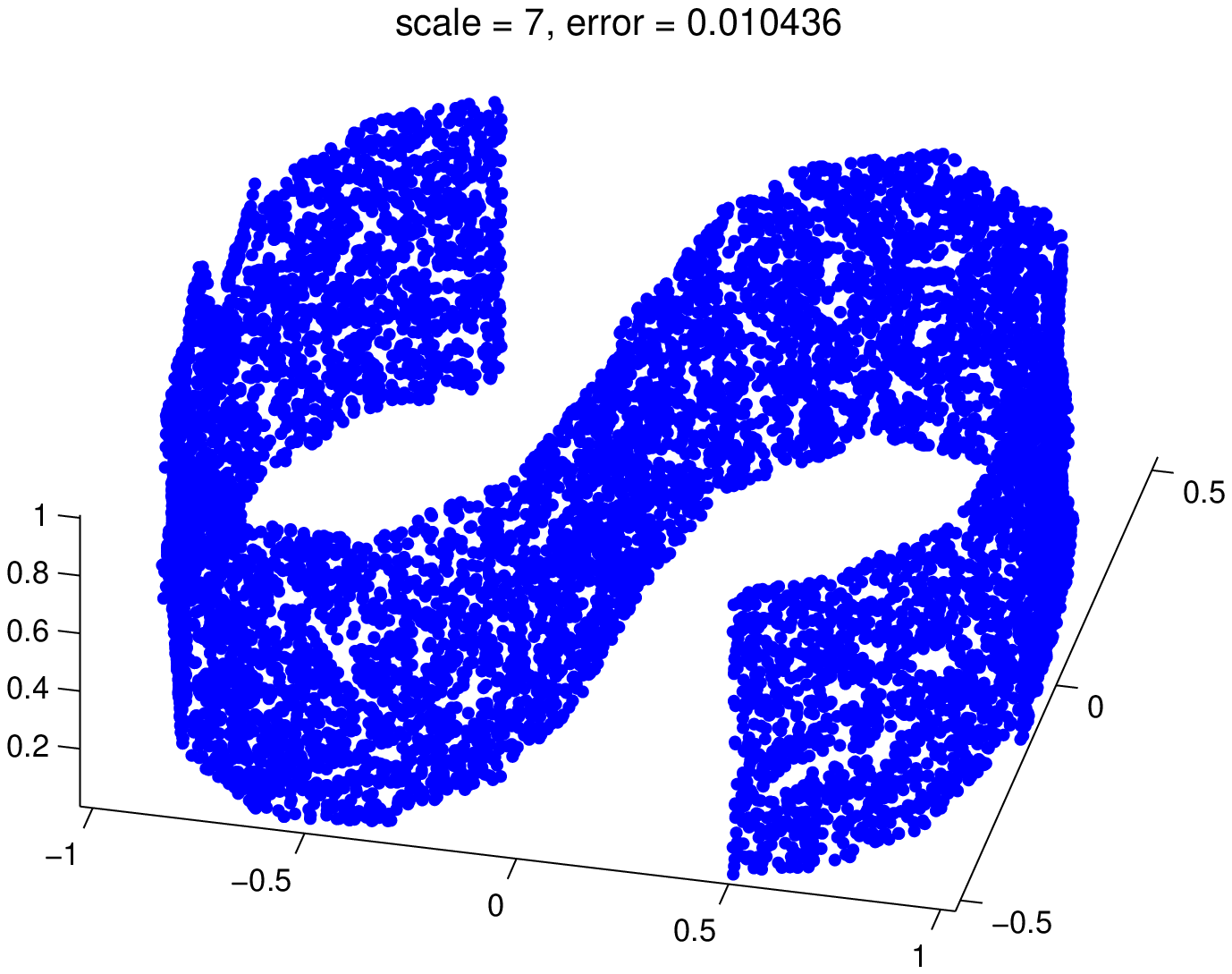}
\includegraphics[width=.32\columnwidth]{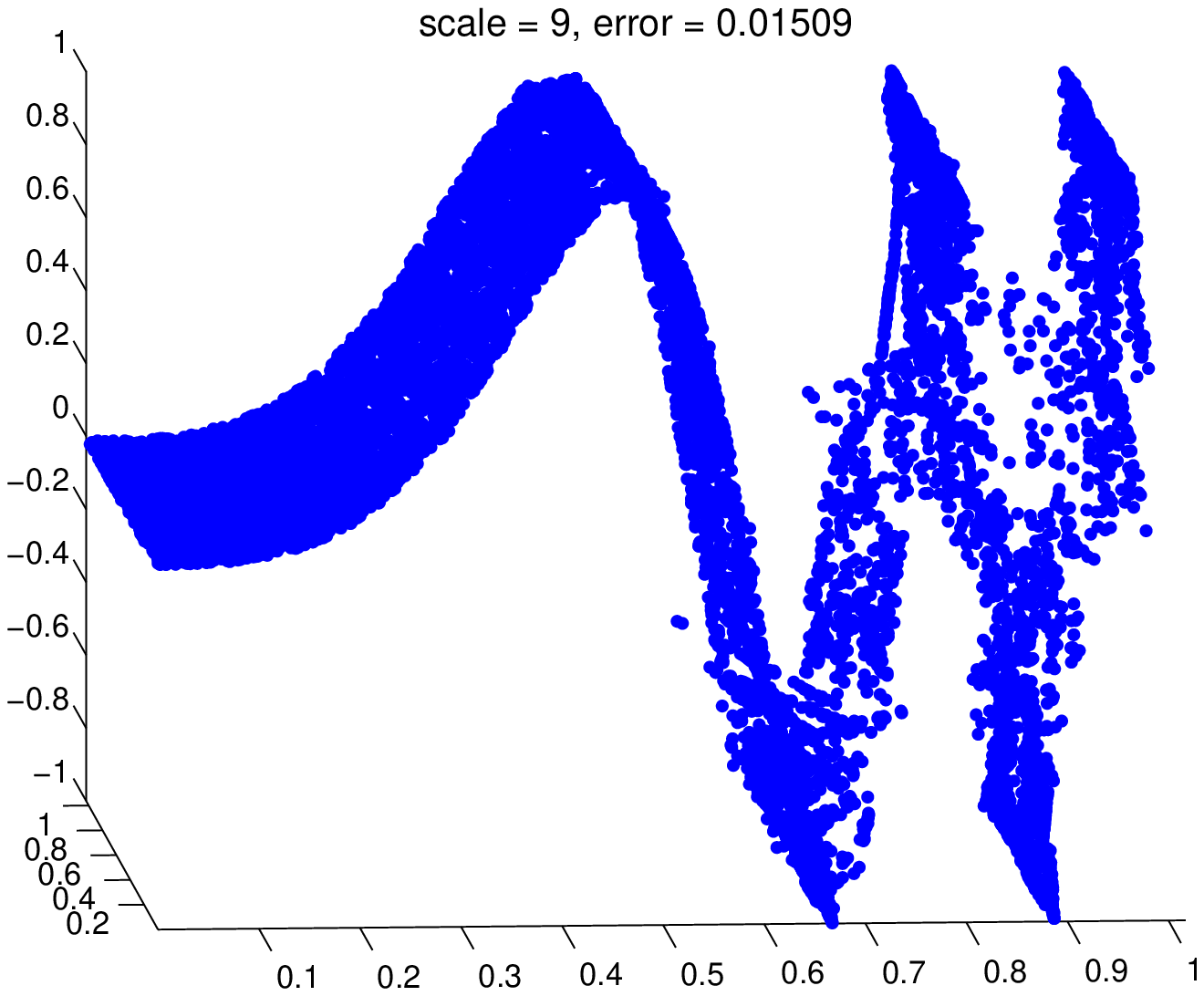}
\\
\includegraphics[width=.32\columnwidth]{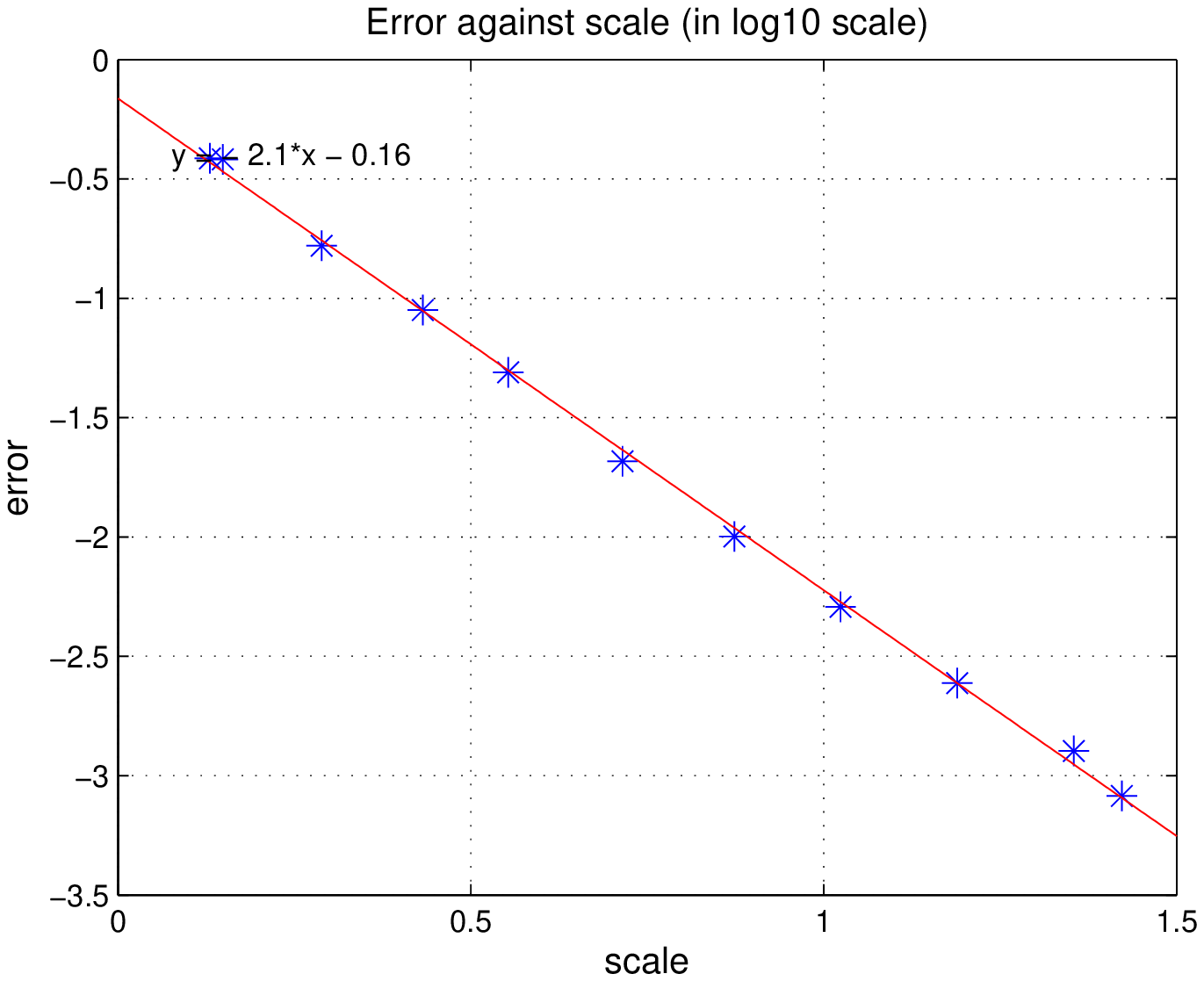}
\includegraphics[width=.32\columnwidth]{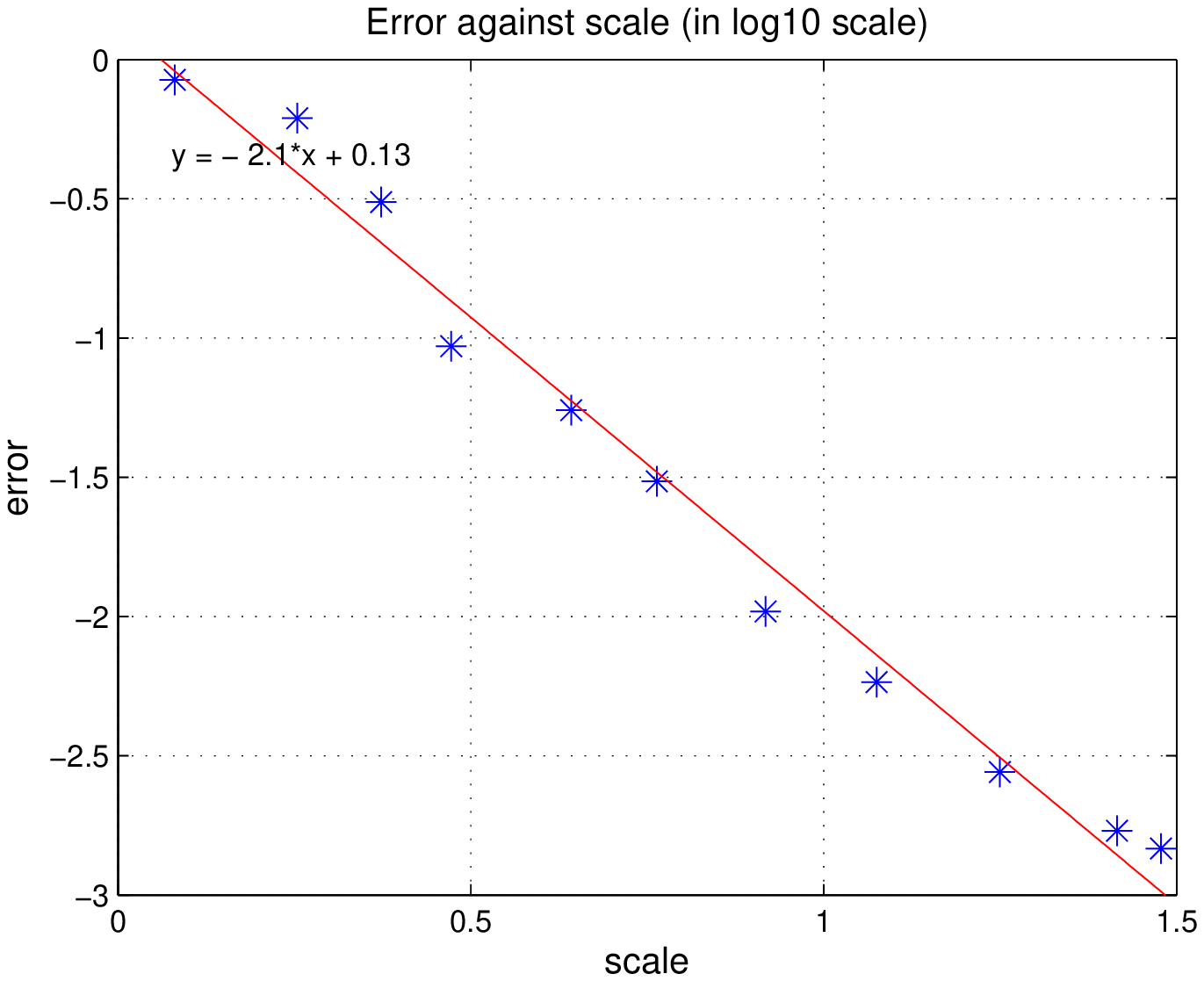}
\includegraphics[width=.32\columnwidth]{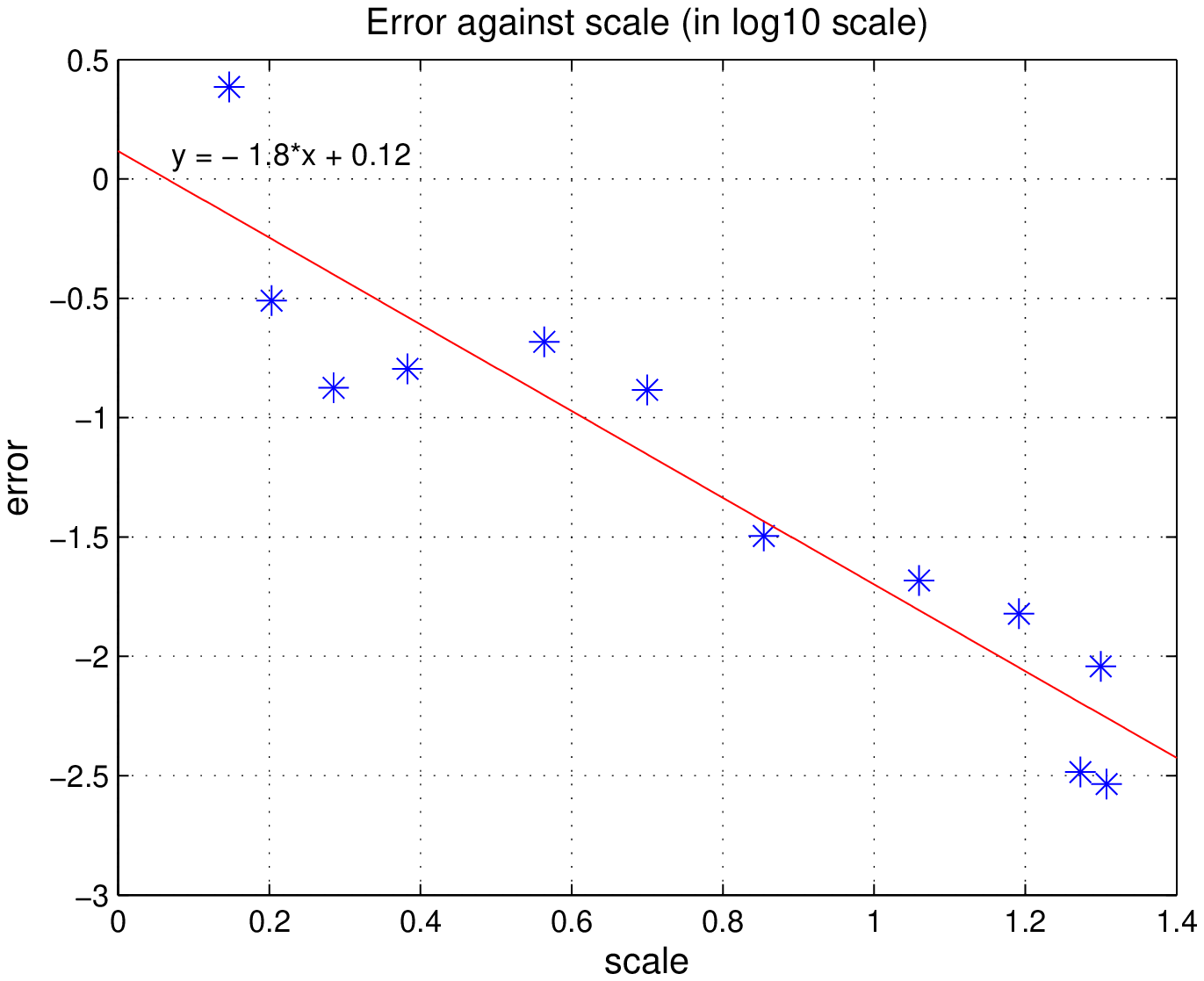}
\caption{Top and Middle: Reconstructions by the algorithm of the three toy data sets in Fig.~\ref{f:toyData} at two selected scales.
Bottom: Reconstruction errors as a function of scale.}
\label{f:IGWT_toyData}
\end{figure}

We threshold the wavelet coefficients to study the compressibility of the wavelet coefficients and
the rate of change of the approximation errors (using compressed wavelet coefficients). For this end,
we use a smaller precision $10^{-5}$ so that the algorithm can examine a larger interval of thresholds.
We first threshold the wavelet coefficients of the \textit{Oscillating2DWave} data at the level $.01$
and plot in Fig.~\ref{f:oscillating2dwave_thres} the reduced matrix of wavelet coefficients and the corresponding best reconstruction of the manifold (i.e., at the finest scale).
Next, we threshold the wavelet coefficients of all three data sets at different levels (from $10^{-5}$ to $1$) and plot in Fig.~\ref{f:toyData_distortionCurves} the compression and error curves.

\begin{figure}
\includegraphics[width=.48\columnwidth]{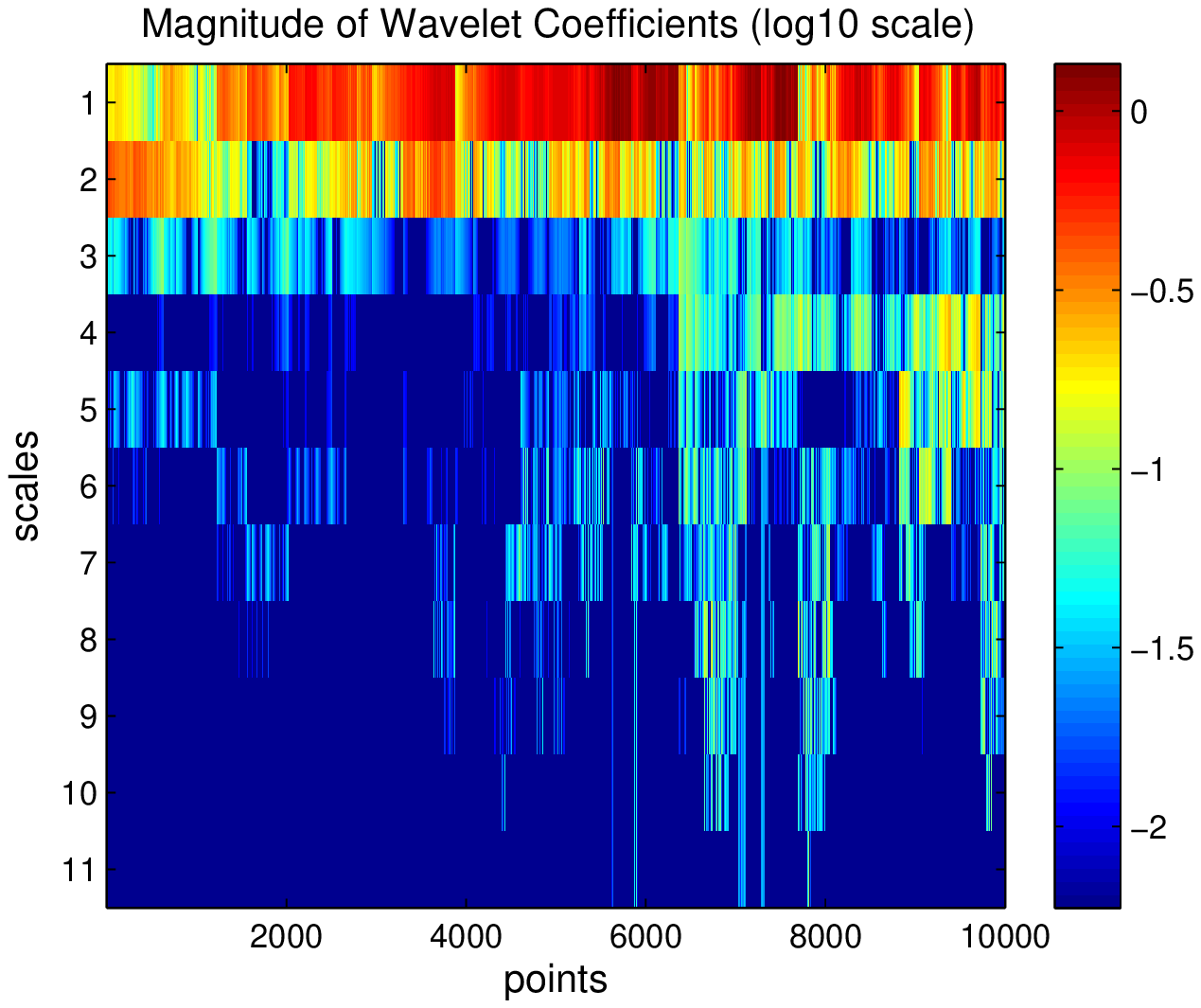}
\includegraphics[width=.48\columnwidth]{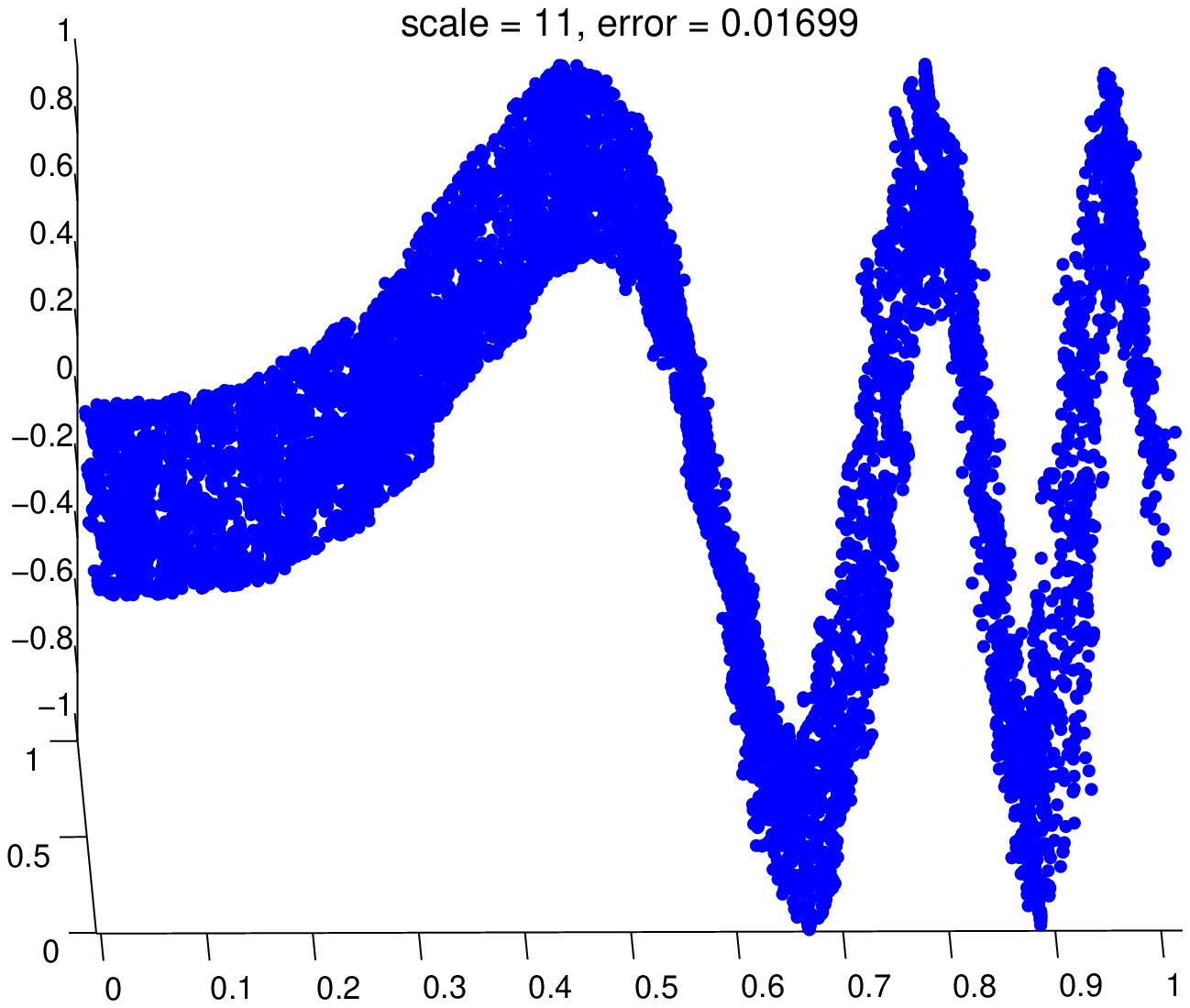}
\caption{We threshold the wavelet coefficients of the \textit{Oscillating2DWave} data at the level of $.01$ and prune the dyadic tree accordingly.
The figure, from left to right, respectively shows the reduced matrix of wavelet coefficients (only their magnitudes), and the corresponding best approximation of the manifold.}
\label{f:oscillating2dwave_thres}
\end{figure}

\begin{figure}[t]
\includegraphics[width=.48\columnwidth]{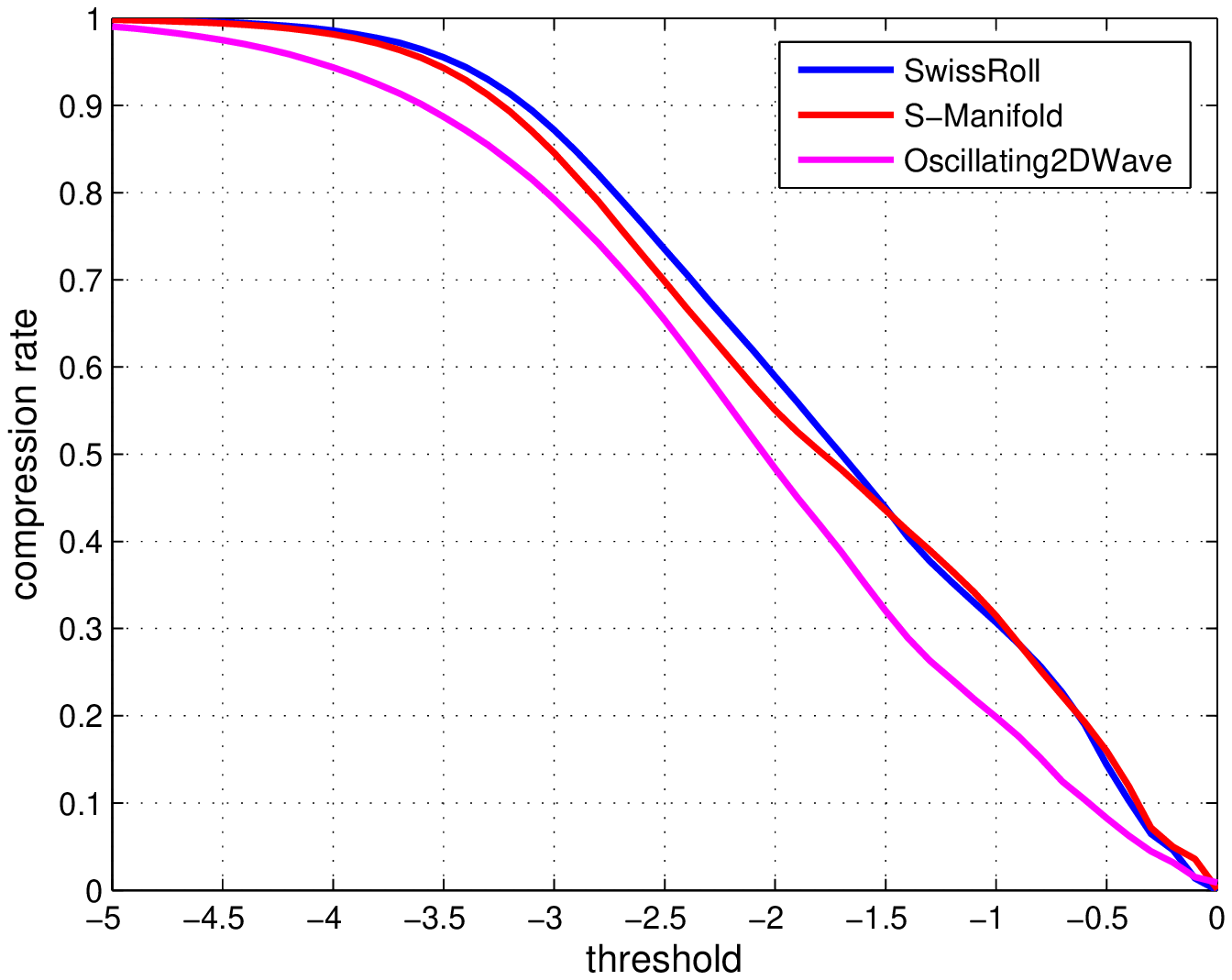}
\includegraphics[width=.48\columnwidth]{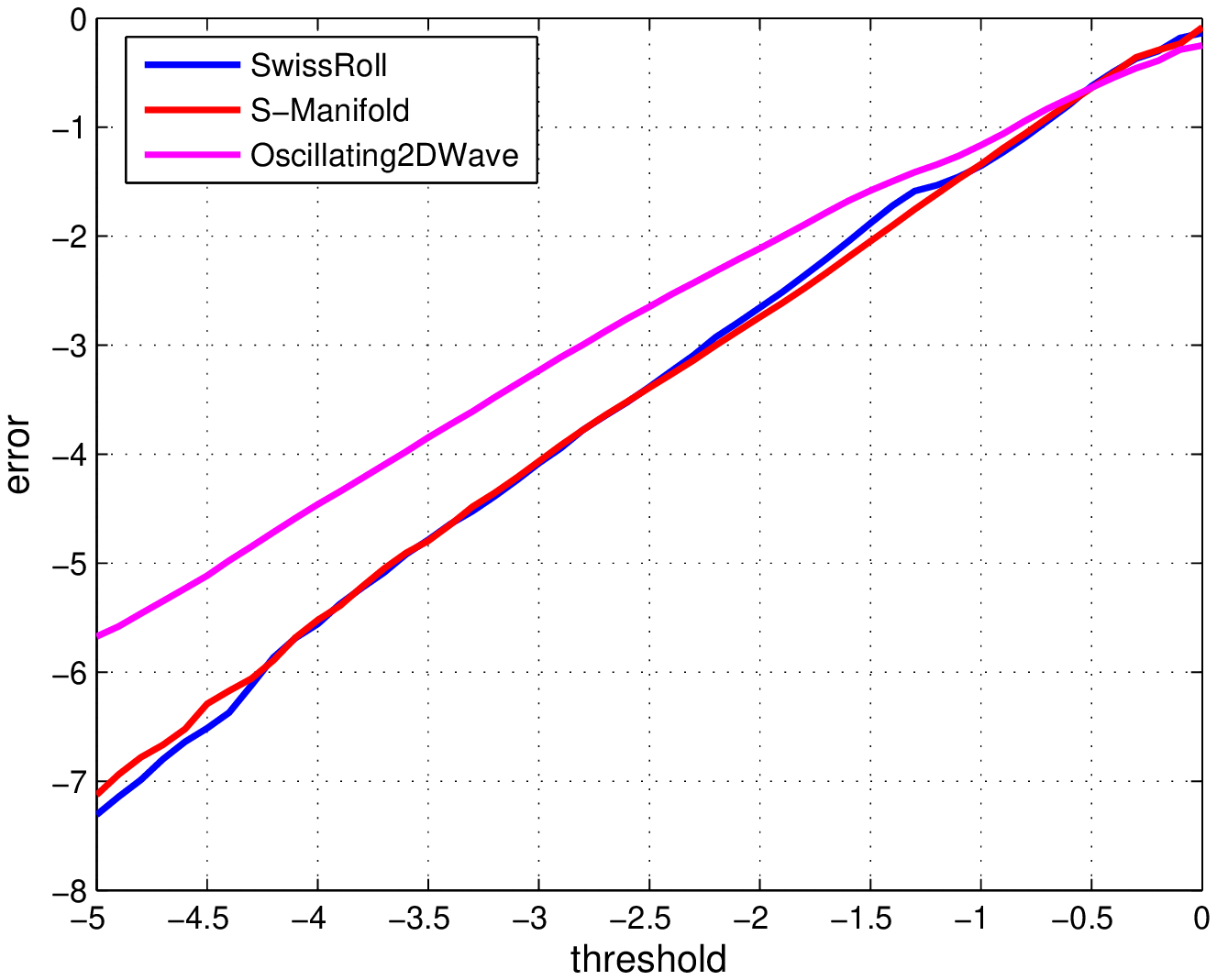}
\caption{Left: the compression ratio of the matrix of the wavelet coefficients shown in Fig.~\ref{f:FGWT_toyData}. Right: the corresponding approximation errors. The linearity is consistent with Theorem \ref{t:GWT}, and essentially says that thresholding level $\delta$ generates approximation errors of order at most $O(\delta)$.}
\label{f:toyData_distortionCurves}
\end{figure}

%
% Real data
%

\subsection{Real data} \label{subsec:realdata}

\subsubsection{MNIST Handwritten Digits}
We first consider the MNIST data set of images of handwritten digits\footnote{Available at \url{http://yann.lecun.com/exdb/mnist/.}}, each of size $28\times 28$.
We use the digits 0 and 1, and randomly sample for each digit 3000 images from the database.
Fig.~\ref{f:Digit1_data} displays a small subset of the sample images of the two digits, as well as all 6000 sample images projected onto the top three PCA dimensions. %and then project the samples into the first $120$ dimensions by SVD.
We apply the algorithm to construct the geometric wavelets and show the wavelet coefficients and the reconstruction errors at all scales in Fig.~\ref{f:Digit1_GWT}.
We select local dimensions for scaling functions by keeping $50\%$ and $95\%$ of the variance, respectively, at the nonleaf and leaf nodes.
We observe that the magnitudes of the coefficients stops decaying after a certain scale. This indicates that the data is not on a smooth manifold.
We expect optimization of the tree and of the wavelet dimensions in future work to lead to a more efficient representation in this case.

%\begin{figure}[ht]
%\centering
%\subfigure[Left: approximation of the data $\M_3$ at scale $3$. Right: $\M_{10}$.  The data is represented in the top three SVD coordinates of the whole %data.]
%{
%\includegraphics[width=0.24\columnwidth]{}
%\includegraphics[width=0.24\columnwidth]{}
%%\label{f:Digit1_1}
%}
%\subfigure[Left: geometric wavelet representation of the data. Right: decay of the coefficients of the data point considered above. The lack of decay %after a certain scale indicates that the data is not on a smooth manifold.]{
%\includegraphics[width=0.24\columnwidth, height=2in]{}
%\includegraphics[width=0.24\columnwidth, height=2in]{}
%%\label{f:Digit1_3}
%}
%\label{f:Digit1_1}
%\caption{Geometric wavelets constructed on a set of $5000$ pictures of the handwritten digit $1$.}
%\end{figure}

%\begin{figure}
%\begin{minipage}{0.32\columnwidth}
%\includegraphics[width=\columnwidth]{}
%\end{minipage}
%\begin{minipage}{0.32\columnwidth}
%\includegraphics[width=\columnwidth]{}
%\end{minipage}
%\begin{minipage}{0.32\columnwidth}
%\centering
%\includegraphics[width=\columnwidth]{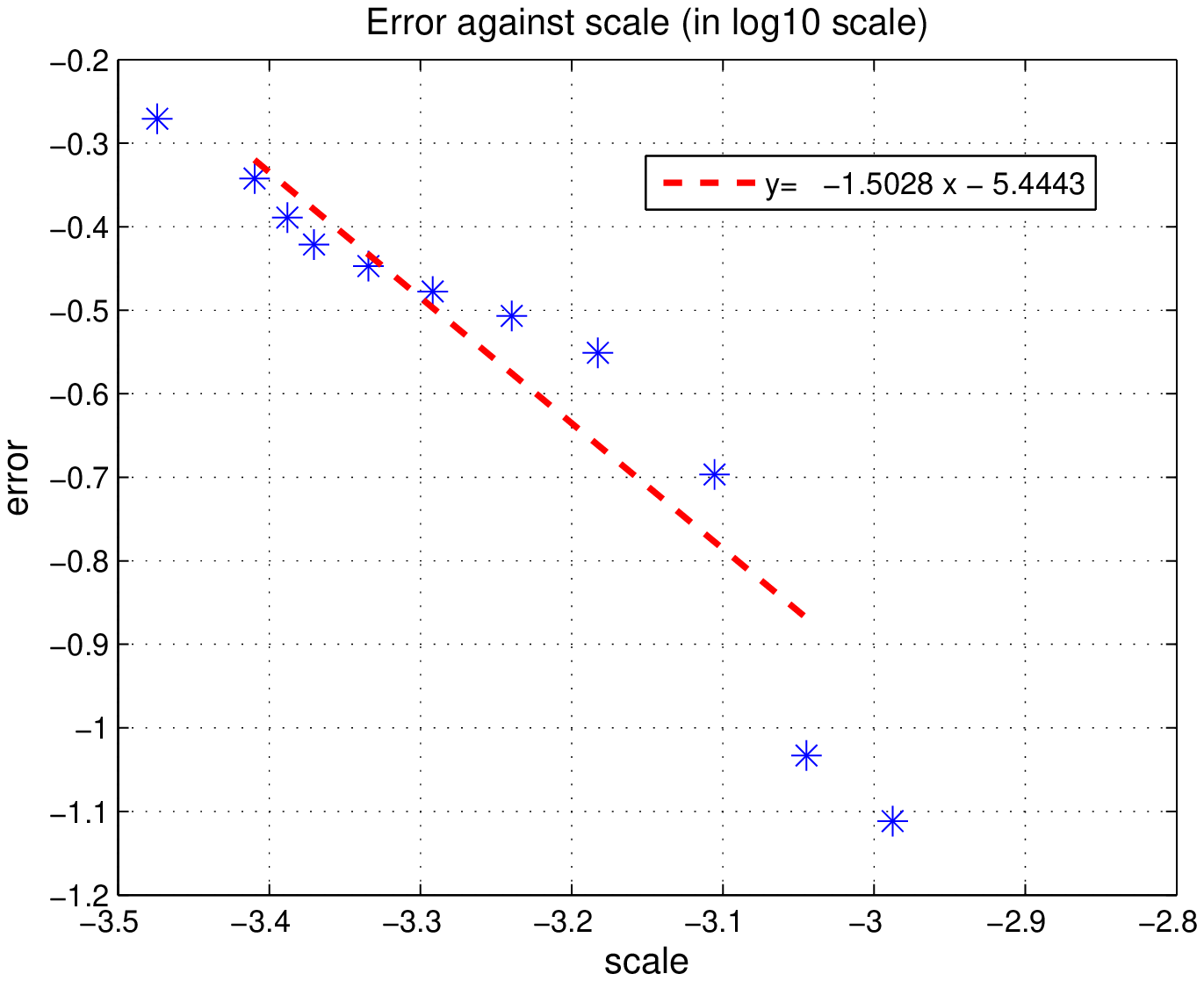}
%\end{minipage}
%\caption{Left: Geometric wavelet representation of the data. Right: Coefficients of a data point. The lack of decay after a certain scale indicates that the data is not on a smooth manifold and higher dimensional approximations are needed at finer scales.}
%\label{f:Digit1_1}
%\end{figure}

\begin{figure}
\includegraphics[width=.48\columnwidth]{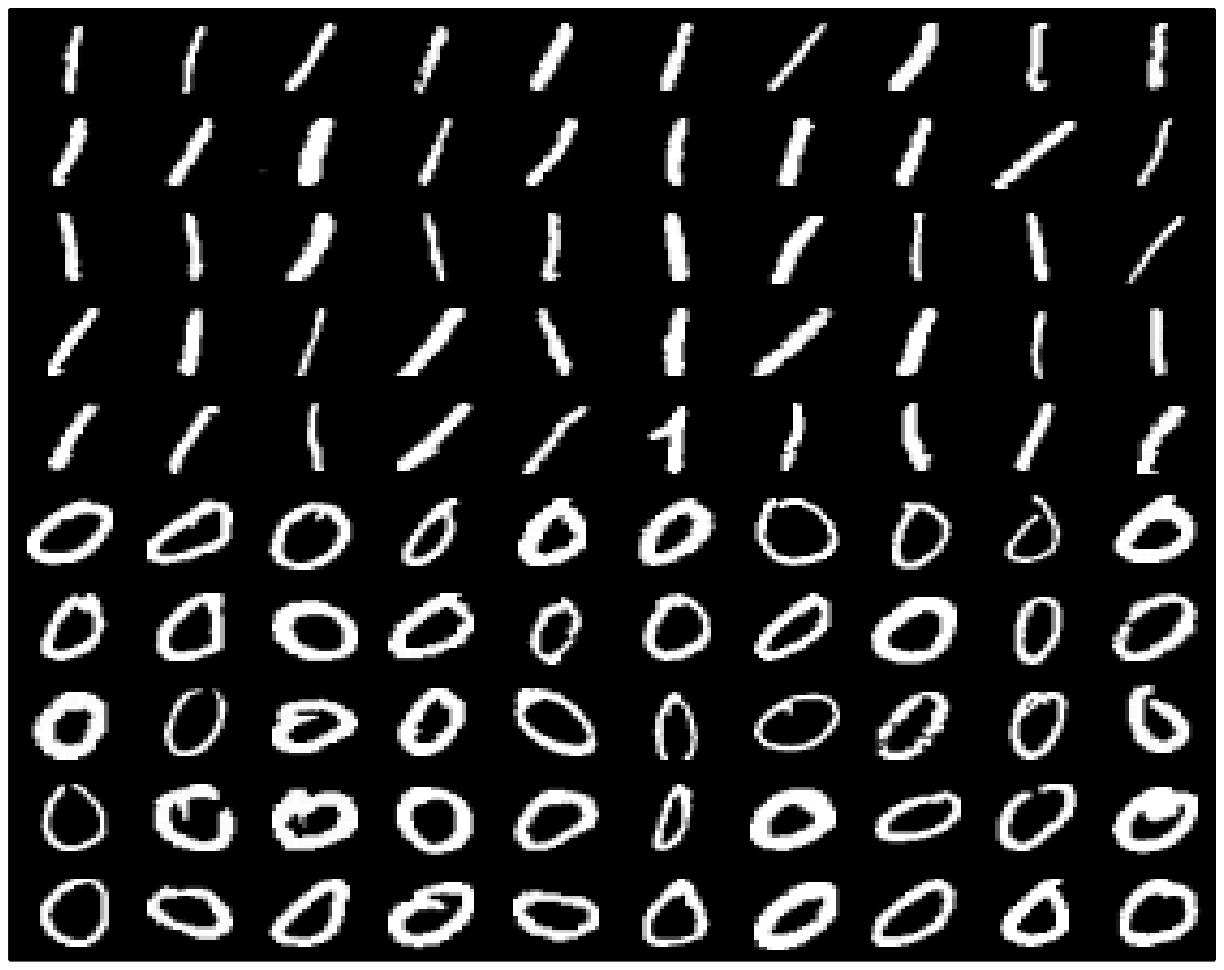}
\includegraphics[width=.48\columnwidth]{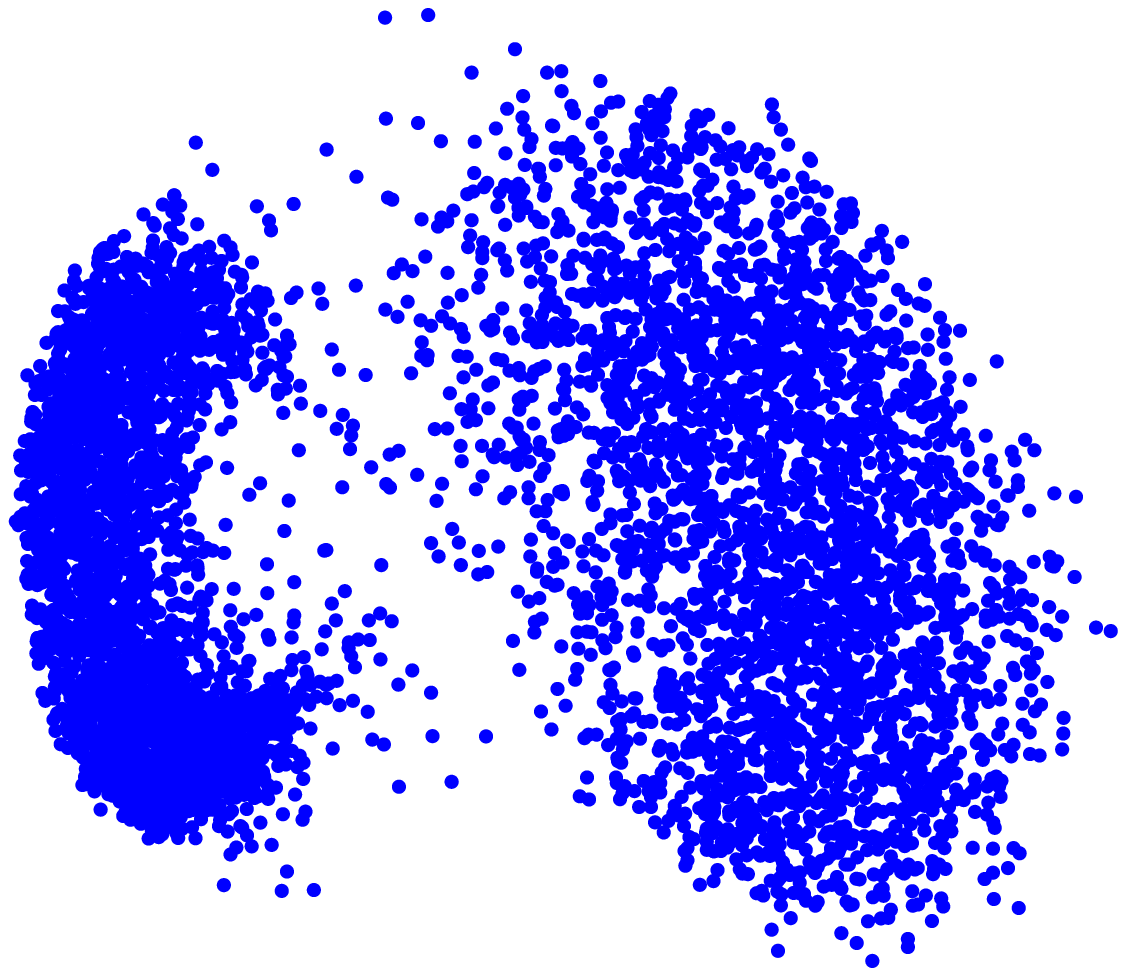}
\caption{Some examples of the MNIST digits 1 and 0 (left) and 6000 sample images shown in top three PCA dimensions (right)}
\label{f:Digit1_data}
\end{figure}

\begin{figure}
\includegraphics[width=.48\columnwidth]{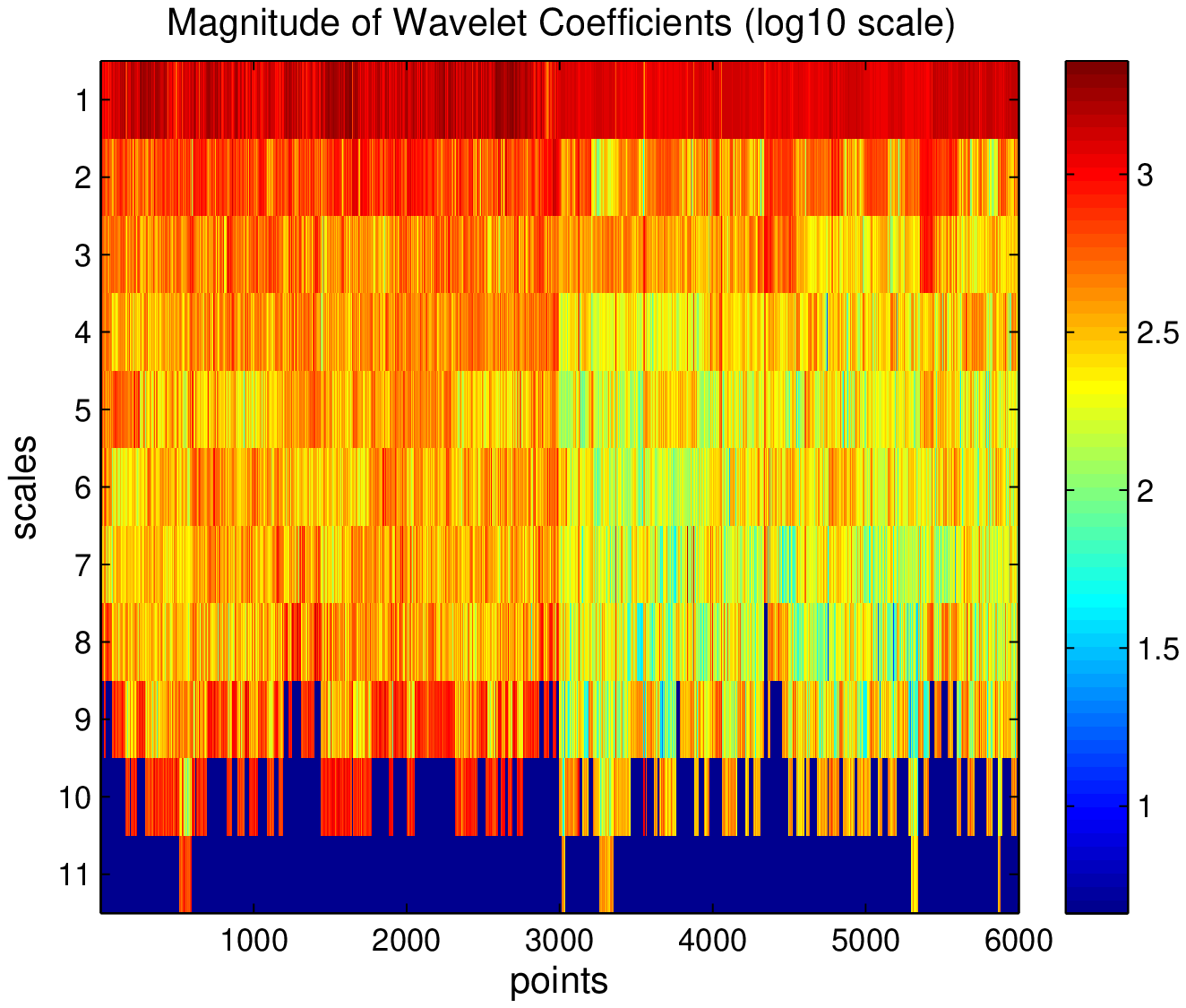}
\includegraphics[width=.48\columnwidth]{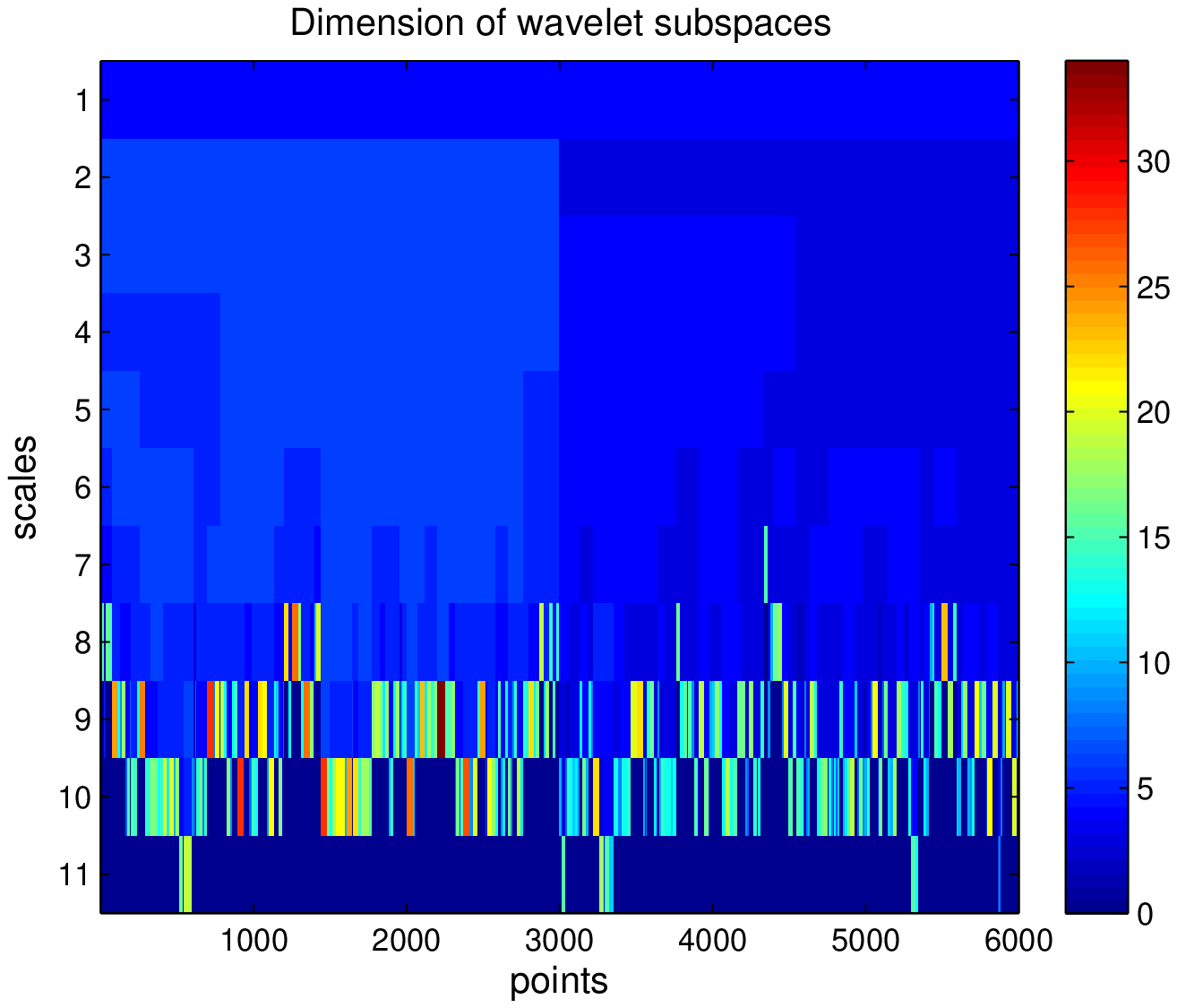}
\\
\includegraphics[width=.48\columnwidth]{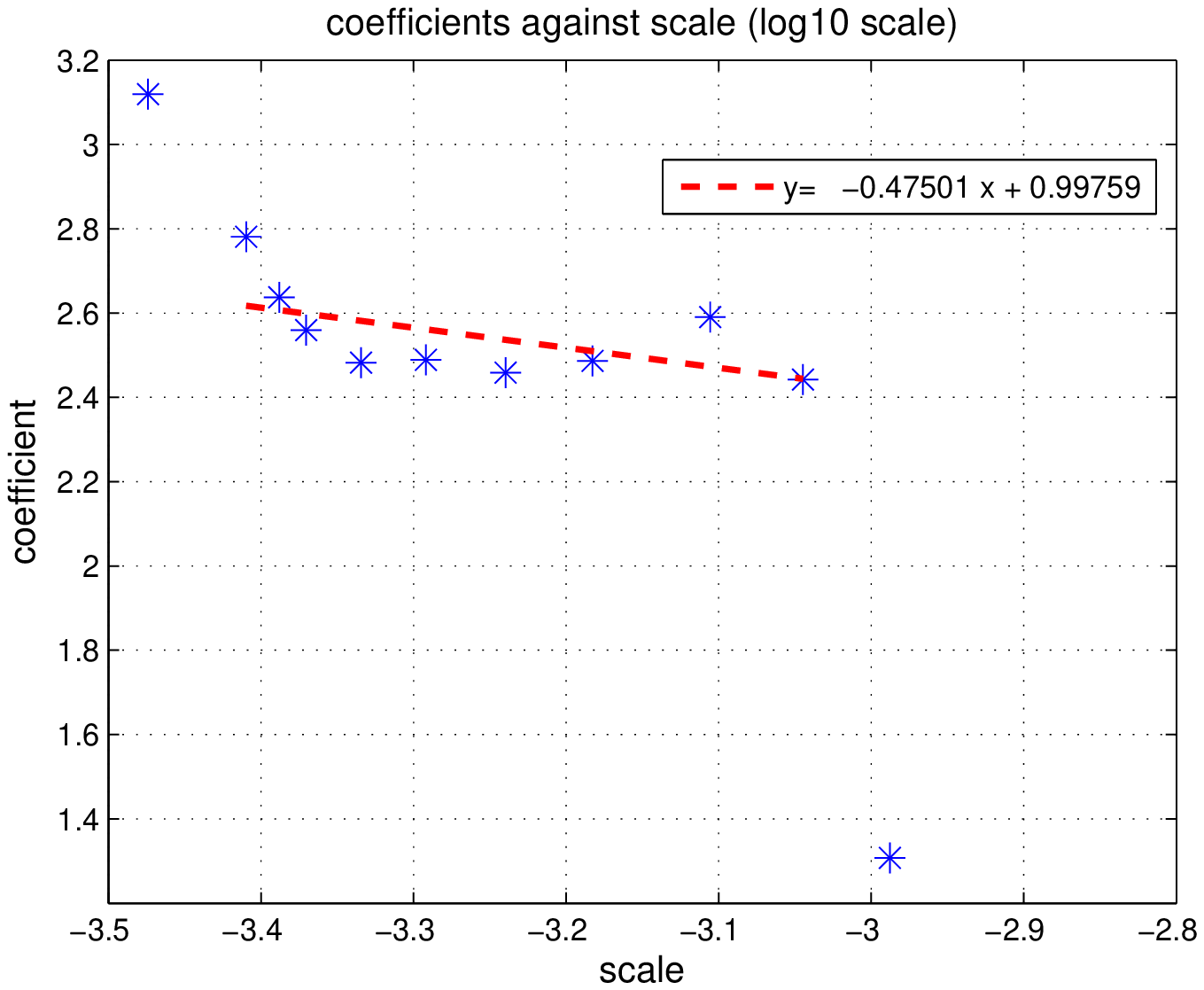}
\includegraphics[width=.48\columnwidth]{Figures/MNISTdigits_errorPlot.eps}
\caption{Top left: geometric wavelet representation of the MNIST digits 1 and 0. As usual, the vertical axis multi-indexes the wavelet coefficients, from coarse (top) to fine (bottom) scales: the block of entries at $(x,j), x\in \C_\jk$ is $\log_{10} |q_{j,x}|$, where $q_{j,x}$ is the vector of geometric wavelet coefficients of $x$ at scale $j$ (see Sec.~\ref{sec:algorithm}). In particular, each row indexes multiple wavelet elements, one for each $k\in\mathcal{K}_j$.
Top right: dimensions of the wavelet subspaces (with the same convention as in the previous plot).
Bottom: magnitude of coefficients (left) and reconstruction error (right) as functions of scale.
The red lines are fitted omitting the first and last points (in each plot) in order to more closely approximate the linear part of the curve. }
\label{f:Digit1_GWT}
\end{figure}

We then fix a data point (or equivalently an image), for each digit, and show in Fig.~\ref{f:Digit1_oneImage} its reconstructed coordinates at all scales and the corresponding dictionary elements (all of which are also images).
We see that at every scale we have a handwritten digit, which is an approximation to the fixed image, and those digits are refined successively to approximate the original data point.
The elements of the dictionary quickly fix the orientation and the thickness, and then they add other distinguishing features of the image being approximated.

\begin{figure}[ht]
\centering
\includegraphics[width=0.48\columnwidth]{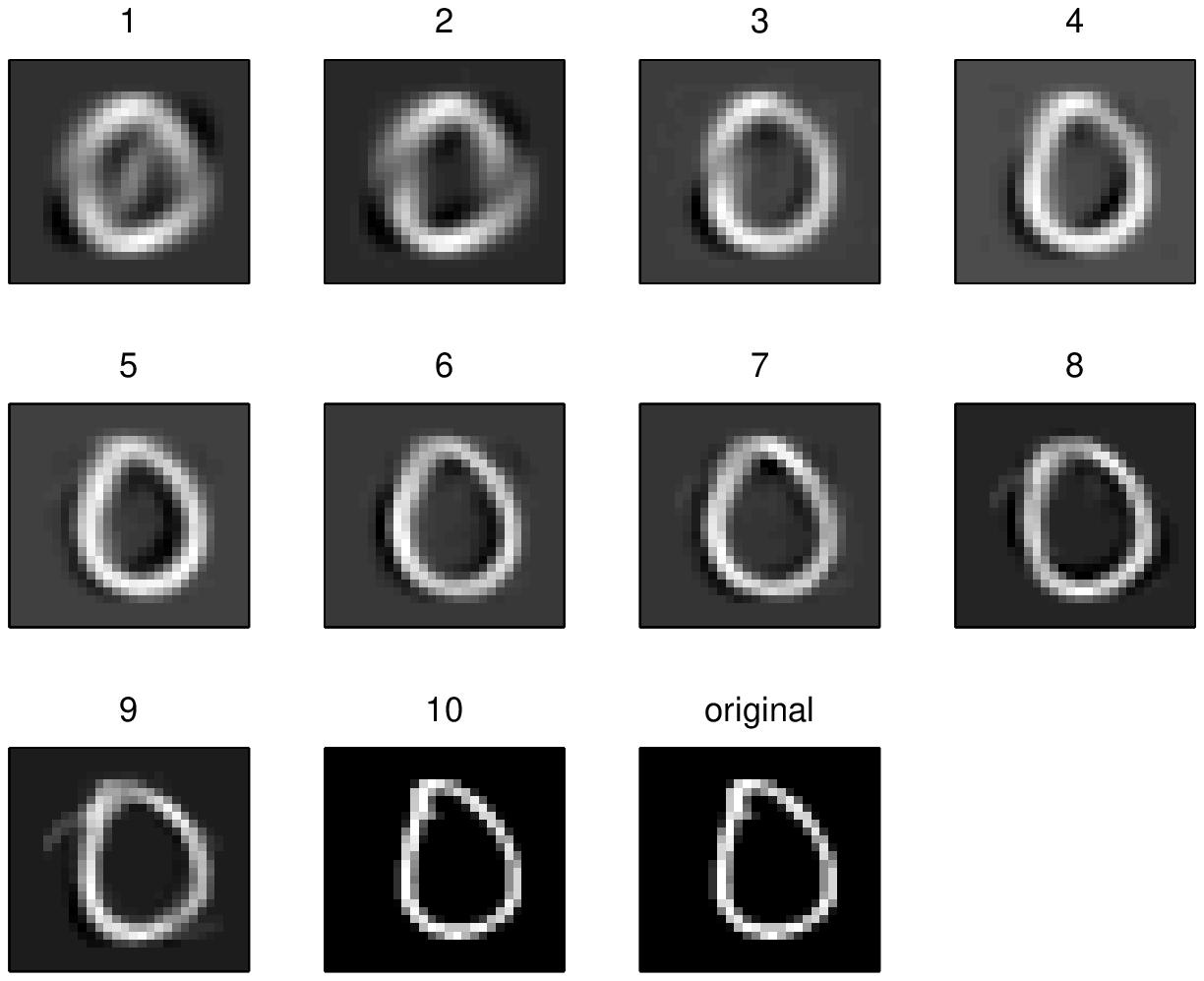}
\includegraphics[width=0.48\columnwidth]{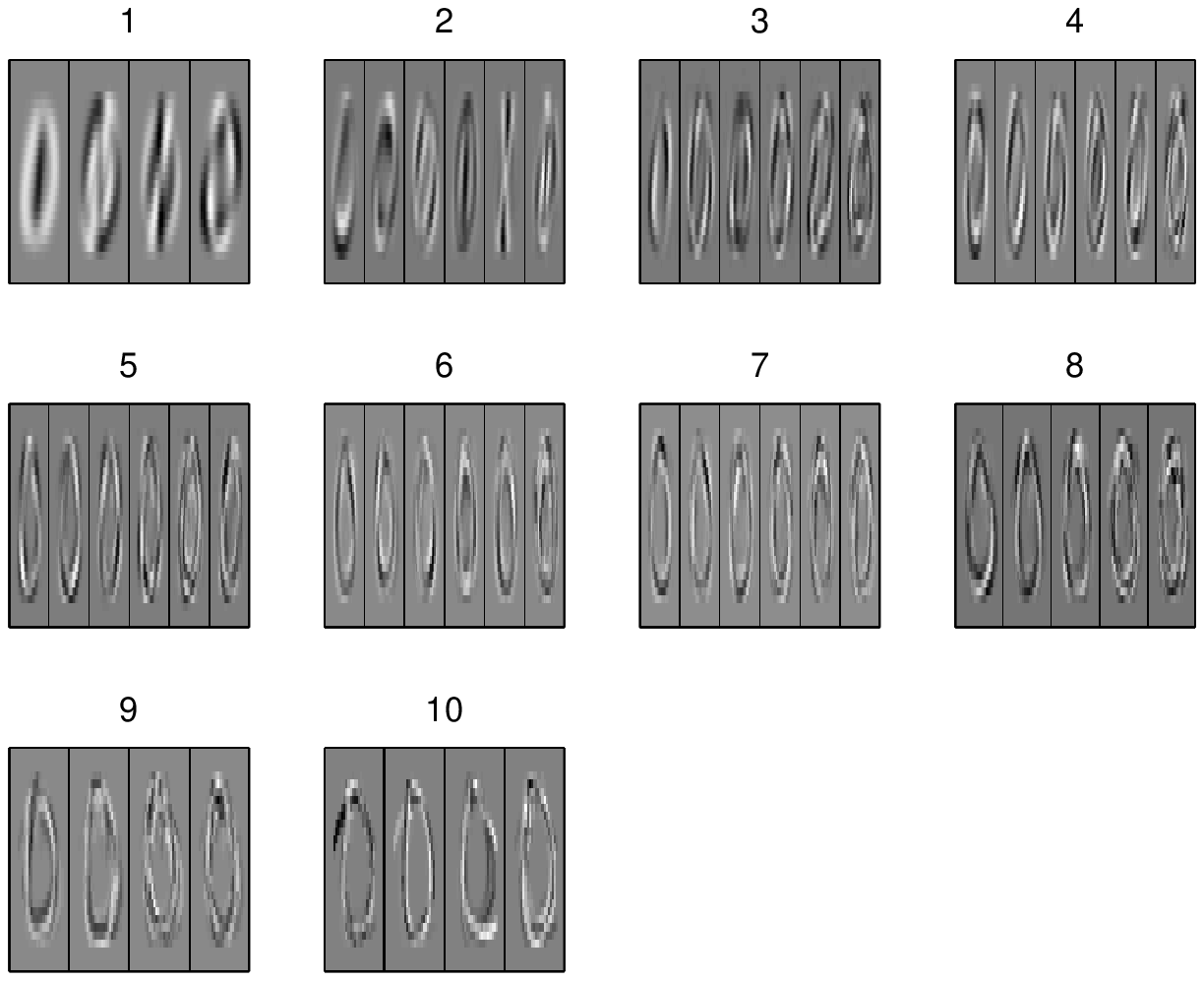}
\includegraphics[width=0.48\columnwidth]{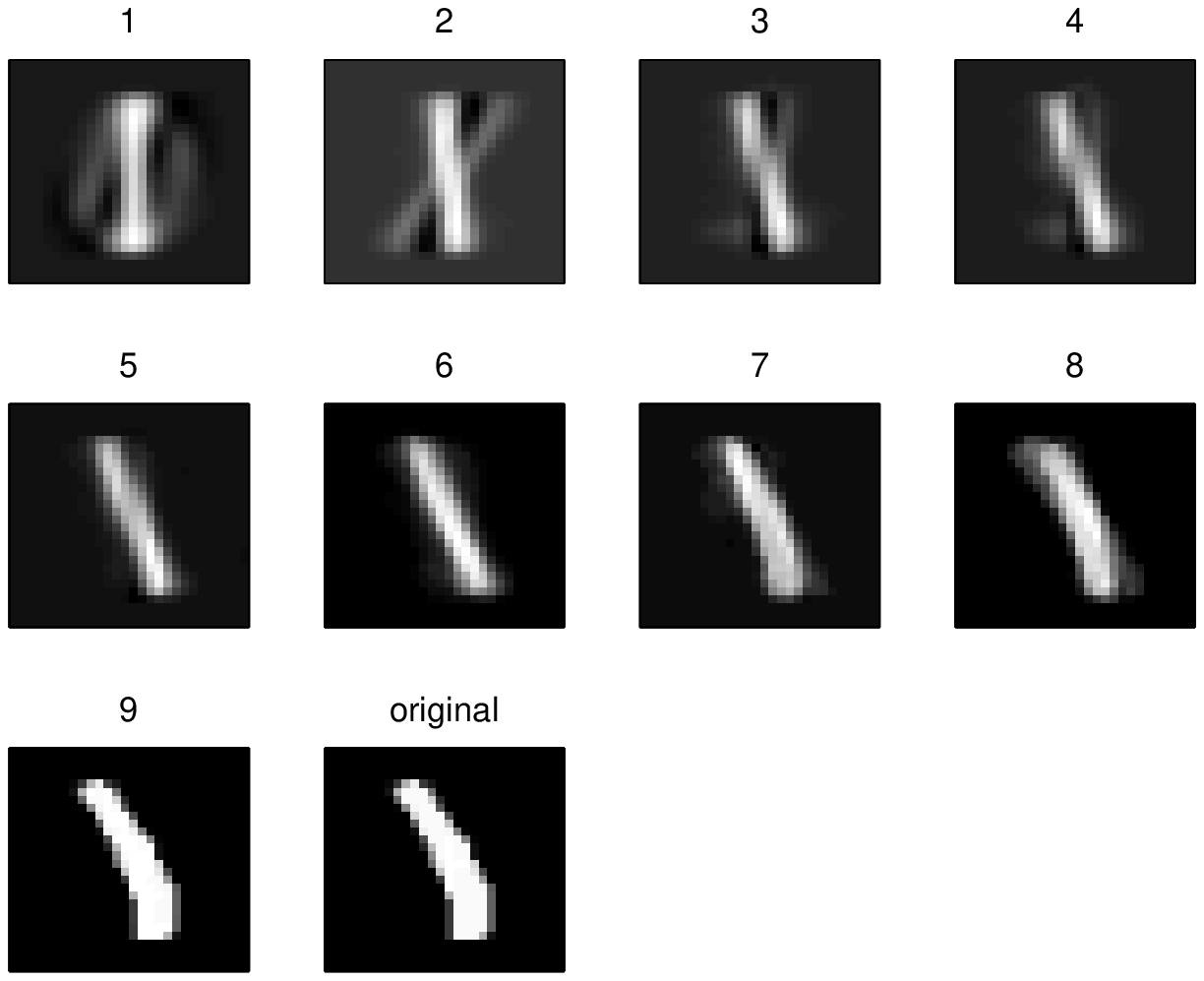}
\includegraphics[width=0.48\columnwidth]{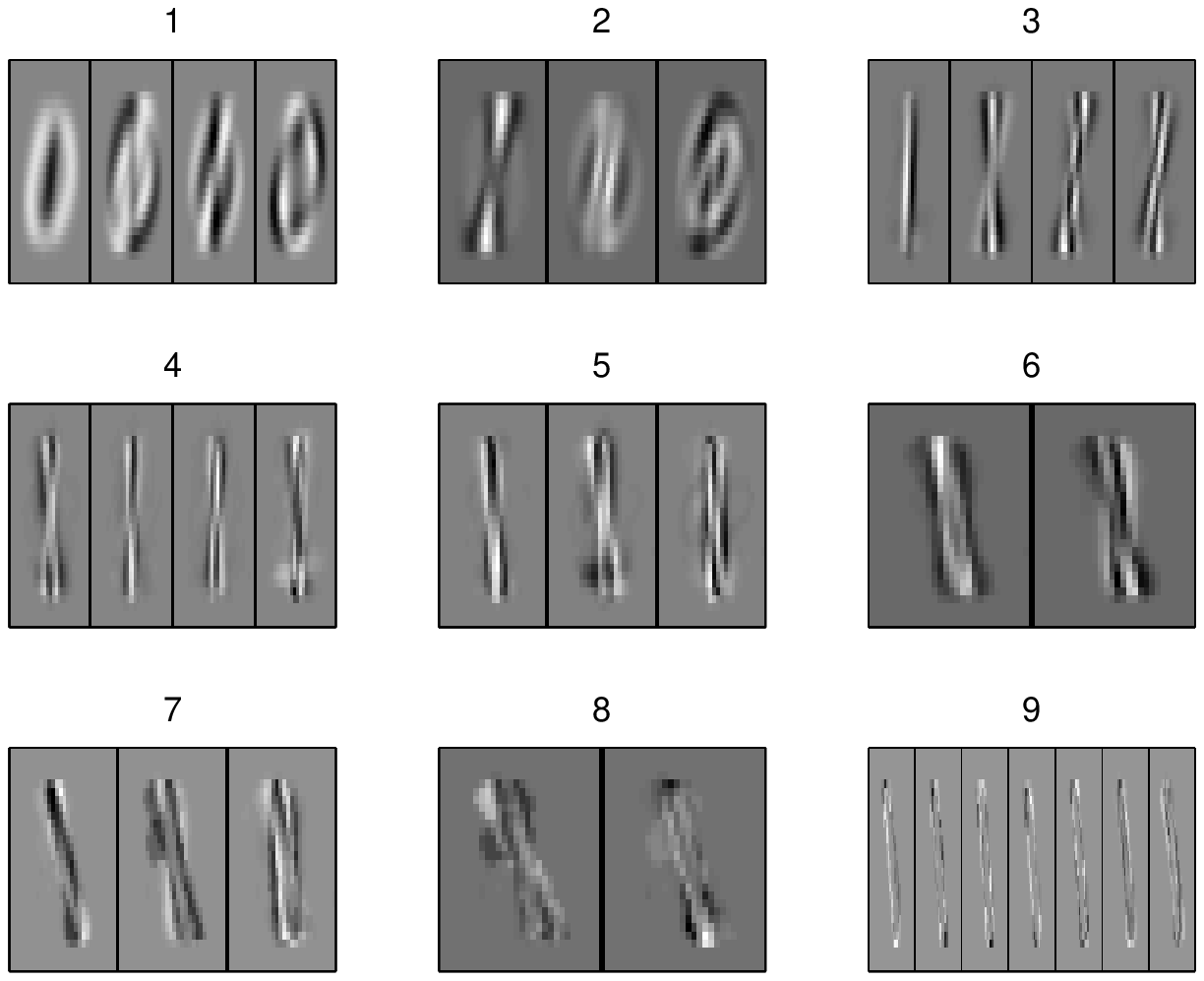}
\caption{Left column: in each figure we plot coarse-to-fine geometric wavelet approximations of the original data point (represented in the last image). Right column: elements of the wavelet dictionary (ordered from coarsest to finest scales) used in the expansion on the left.}
\label{f:Digit1_oneImage}
\end{figure}

\subsubsection{Human Face Images}

We consider the cropped face images in both the Yale Face Database B\footnote{\url{http://cvc.yale.edu/projects/yalefacesB/yalefacesB.html}}
and the Extended Yale Face Database B\footnote{\url{http://vision.ucsd.edu/~leekc/ExtYaleDatabase/ExtYaleB.html}},
which are available for 38 human subjects each seen in frontal pose and under 64 illumination conditions.
(Note that the original images have large background variations, sometimes even for one fixed human subject, so we decide not to use them and solely focus on the faces.)
Among these 2432 images, 18 of them are corrupted, which we discard.
Fig.~\ref{f:YaleB_data} displays a random subset of the 2414 face images.
Since the images have large size ($192\times 168$), to reduce computational complexity we first project the images into the first $500$ dimensions by SVD, keeping about 99.5\% variance.
We apply the algorithm to the compressed data to construct the geometric wavelets and show the wavelet coefficients, dimensions and reconstruction errors at all scales in Fig.~\ref{f:YaleB_GWT}.
Again, we have kept $50\%$ and $95\%$ of the variance, respectively, at the nonleaf and leaf nodes when constructing scaling functions.
Note that both the magnitudes of the wavelet coefficients and the approximation errors have similar patterns with those for the MNIST digits (see Fig.~\ref{f:Digit1_GWT}),
indicating again a lack of manifold structure in this data set.
We also fix an image and show in Fig.~\ref{f:YaleB_oneImage} its reconstructed coordinates at all scales and the corresponding wavelet bases (all of which are also images).
\begin{figure}
\includegraphics[width=.48\columnwidth]{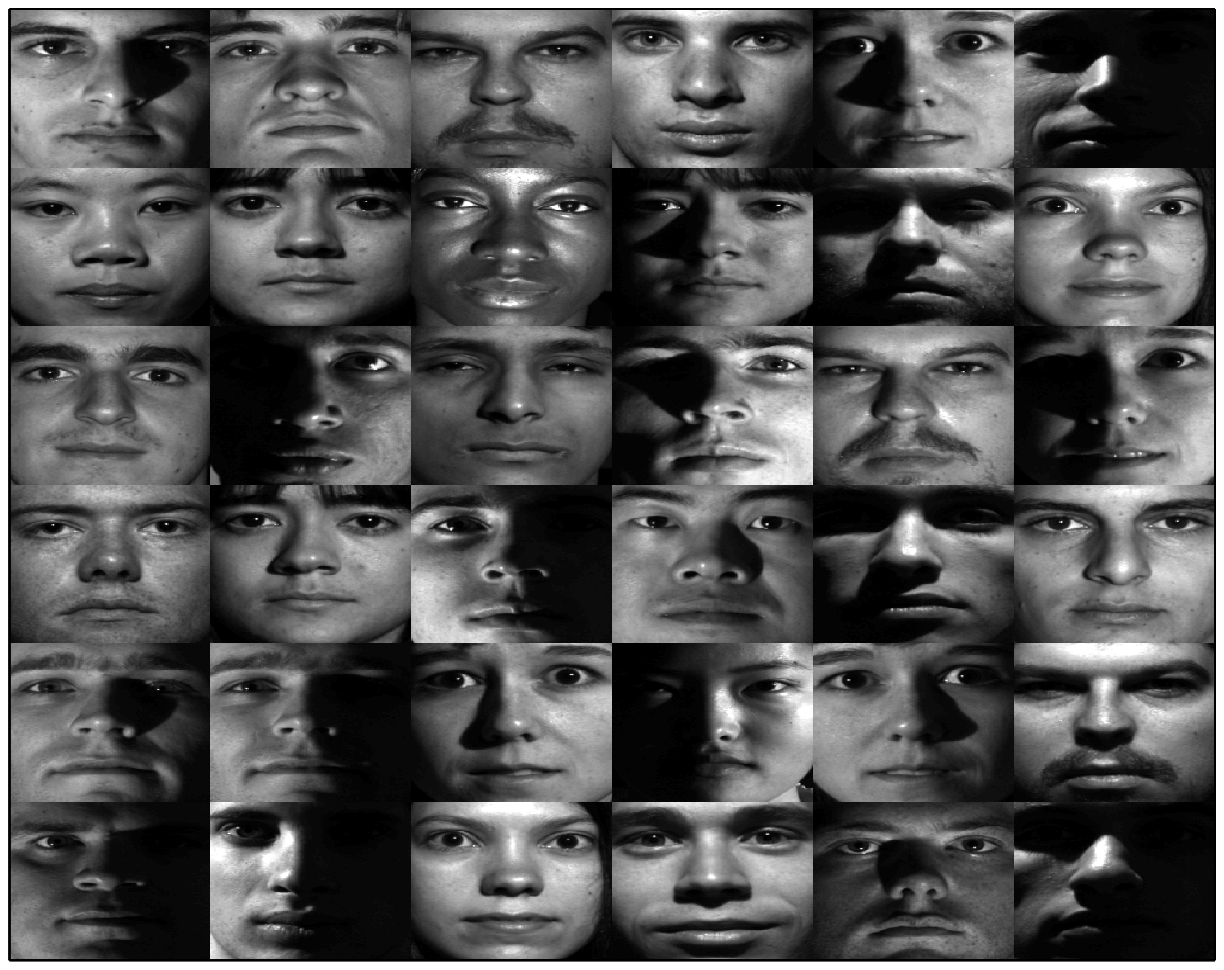}
\includegraphics[width=.48\columnwidth]{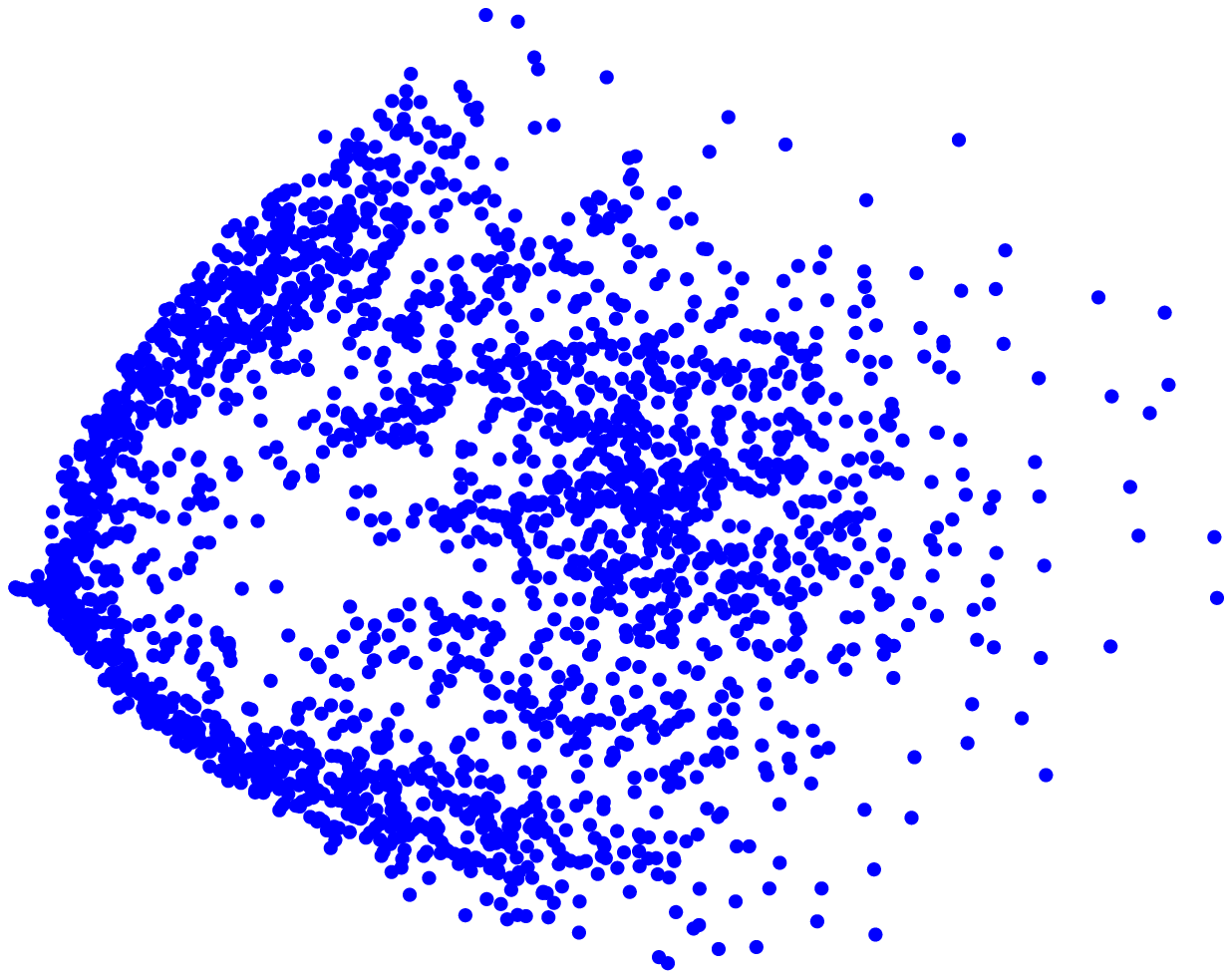}
\caption{Left: A random subset of the 2414 face images (38 human subjects in frontal pose under 64 illumination conditions); Right: the entire data set shown in top three PCA dimensions.}
\label{f:YaleB_data}
\end{figure}

\begin{figure}
\includegraphics[width=.48\columnwidth]{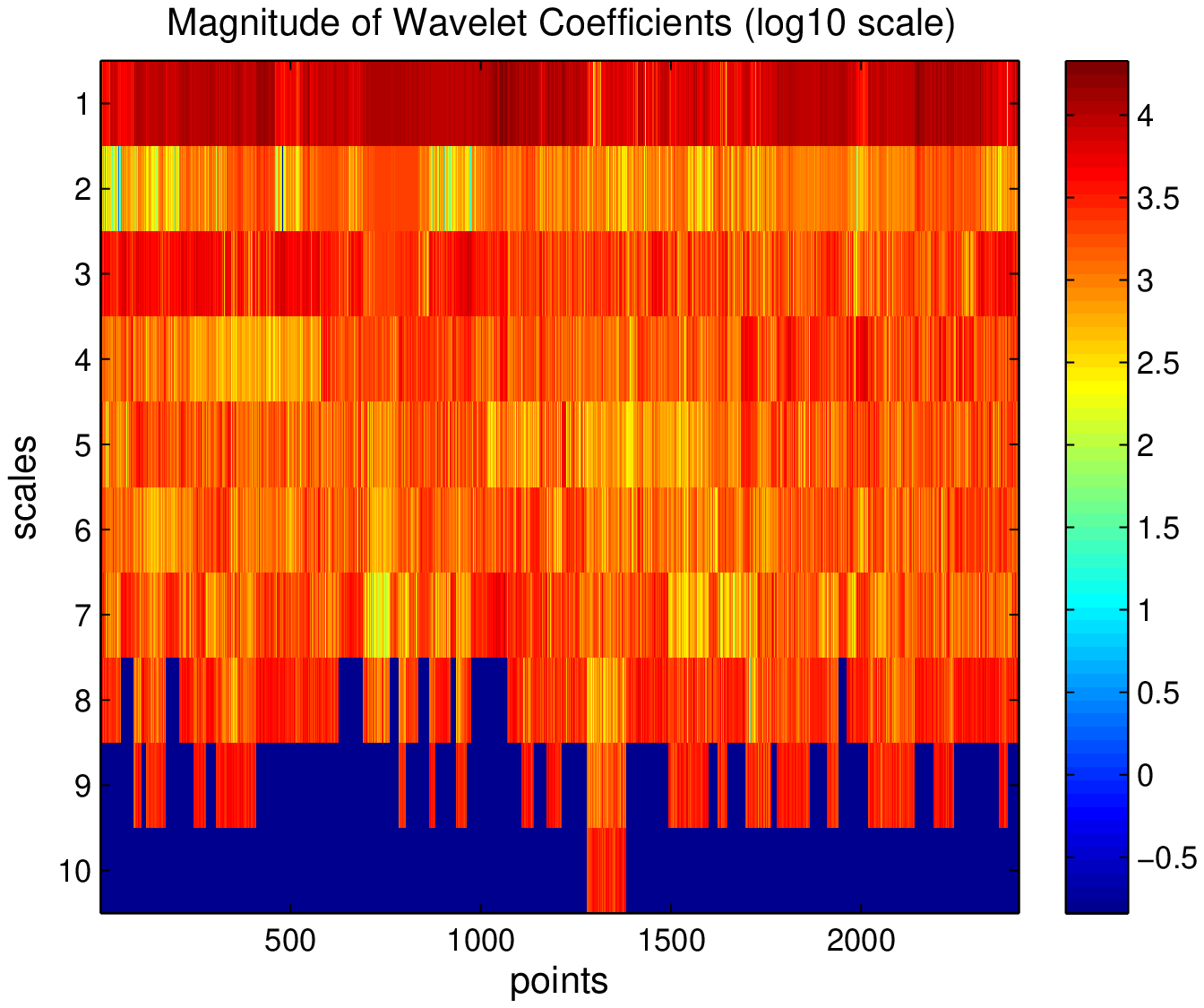}
\includegraphics[width=.48\columnwidth]{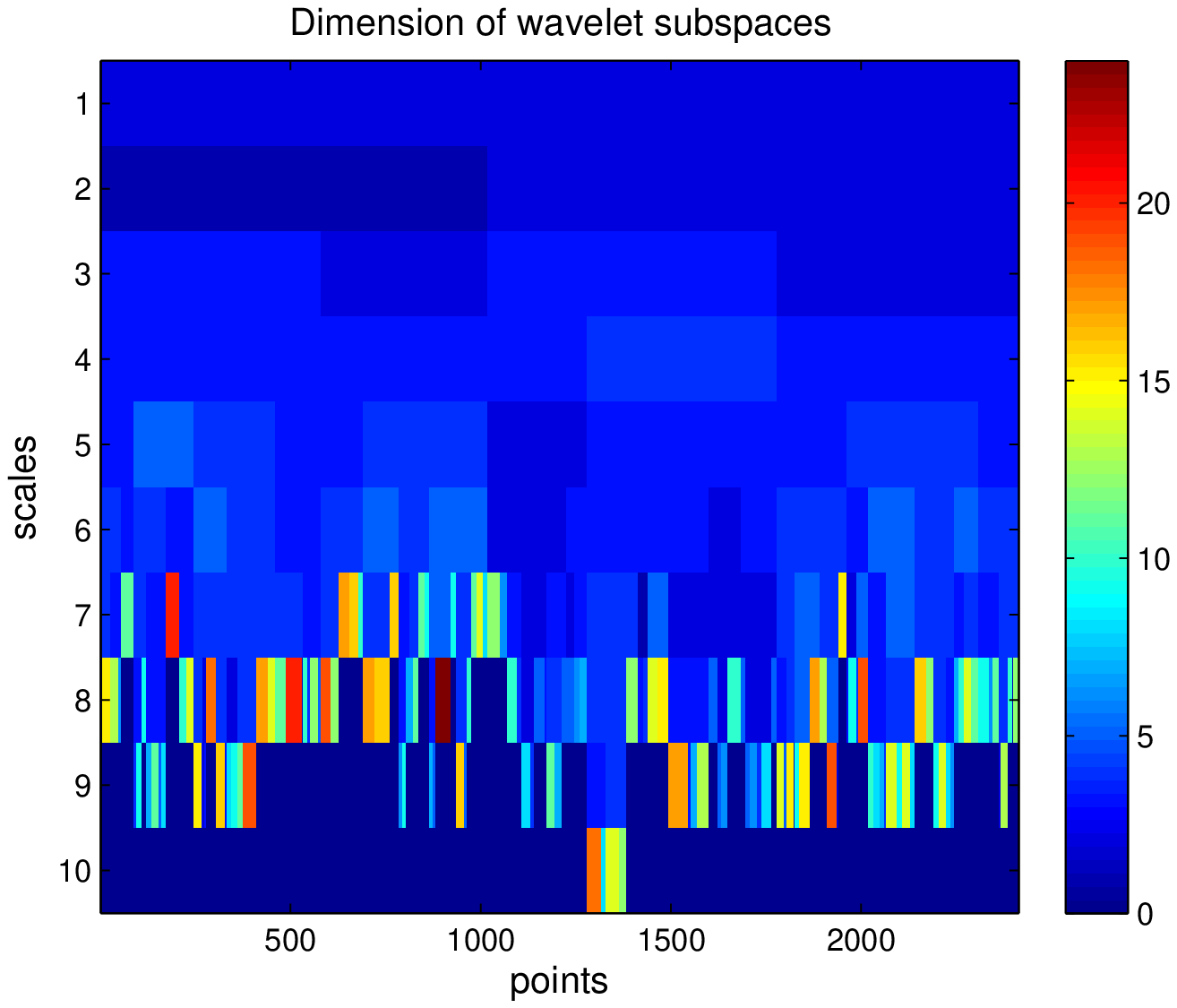}
\\
\includegraphics[width=.48\columnwidth]{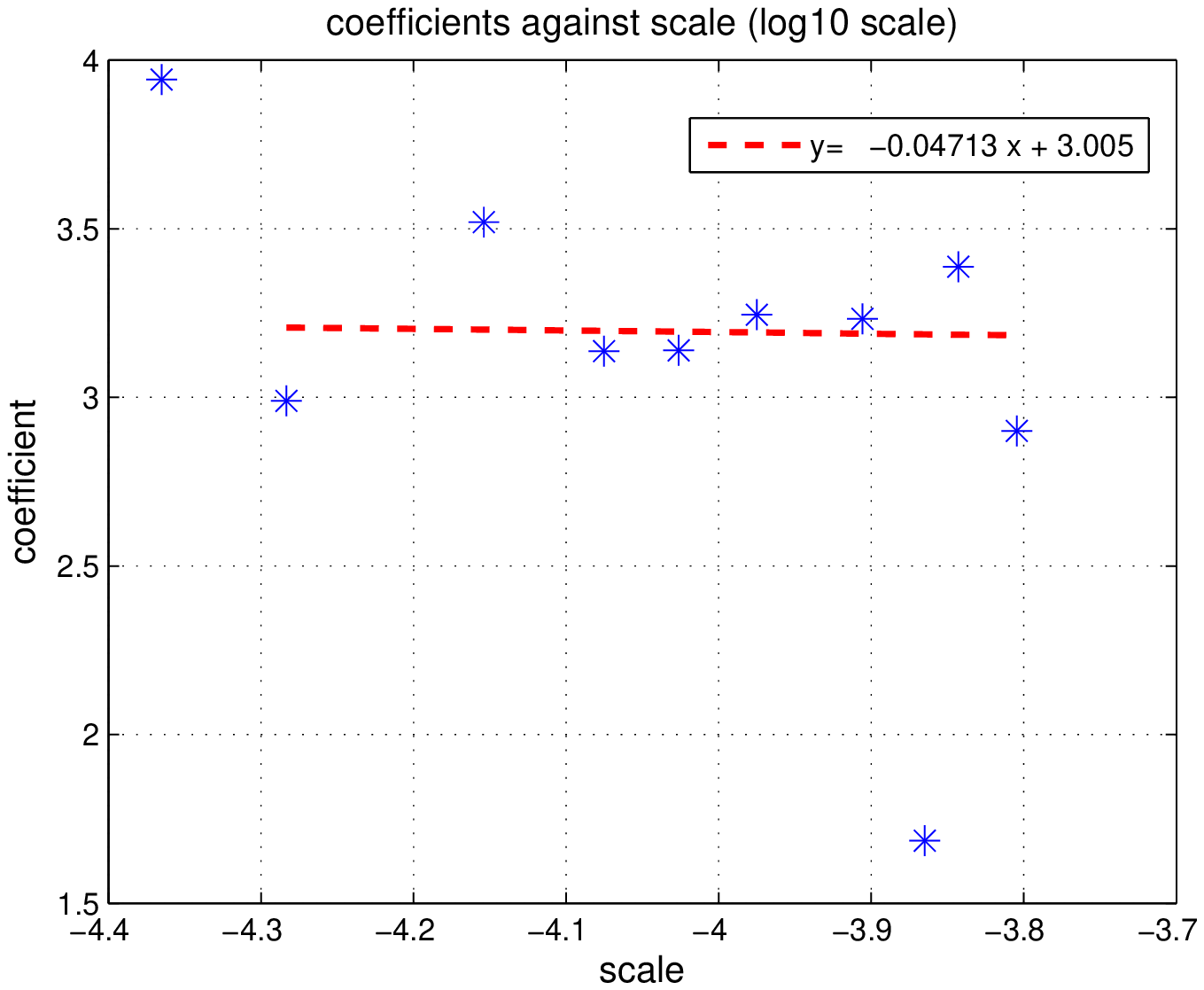}
\includegraphics[width=.48\columnwidth]{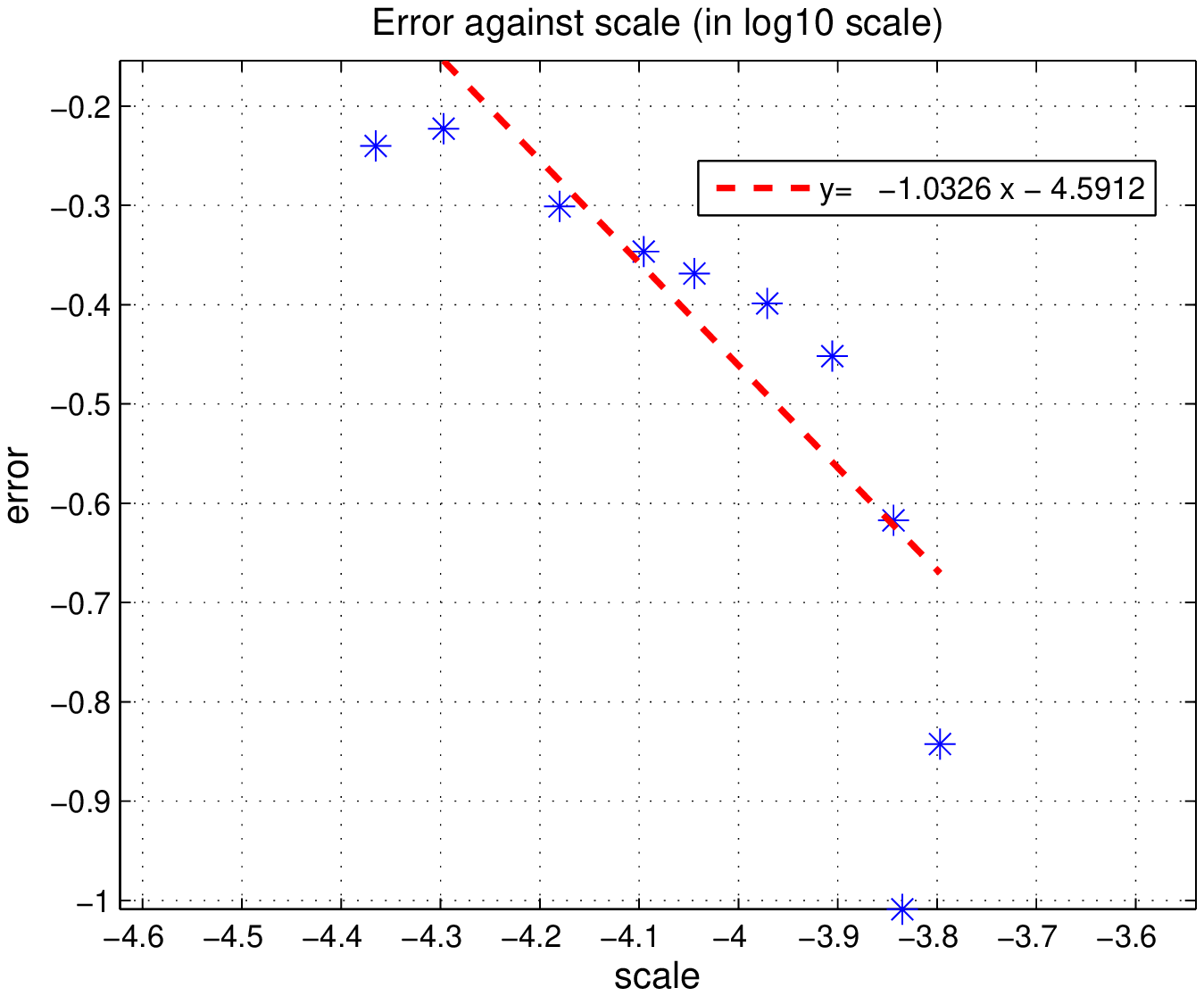}
\caption{Top left: magnitudes of the wavelet coefficients of the cropped faces (2414 images) arranged in a tree. Top right: dimensions of the wavelet subspaces.
Bottom: magnitude of coefficients (left) and reconstruction error (right) as functions of scale.
The red lines are fitted omitting the first and last points (in each plot) in order to more closely approximate the linear part of the curve.}
\label{f:YaleB_GWT}
\end{figure}

\begin{figure}[ht]
\centering
\includegraphics[width=0.48\columnwidth]{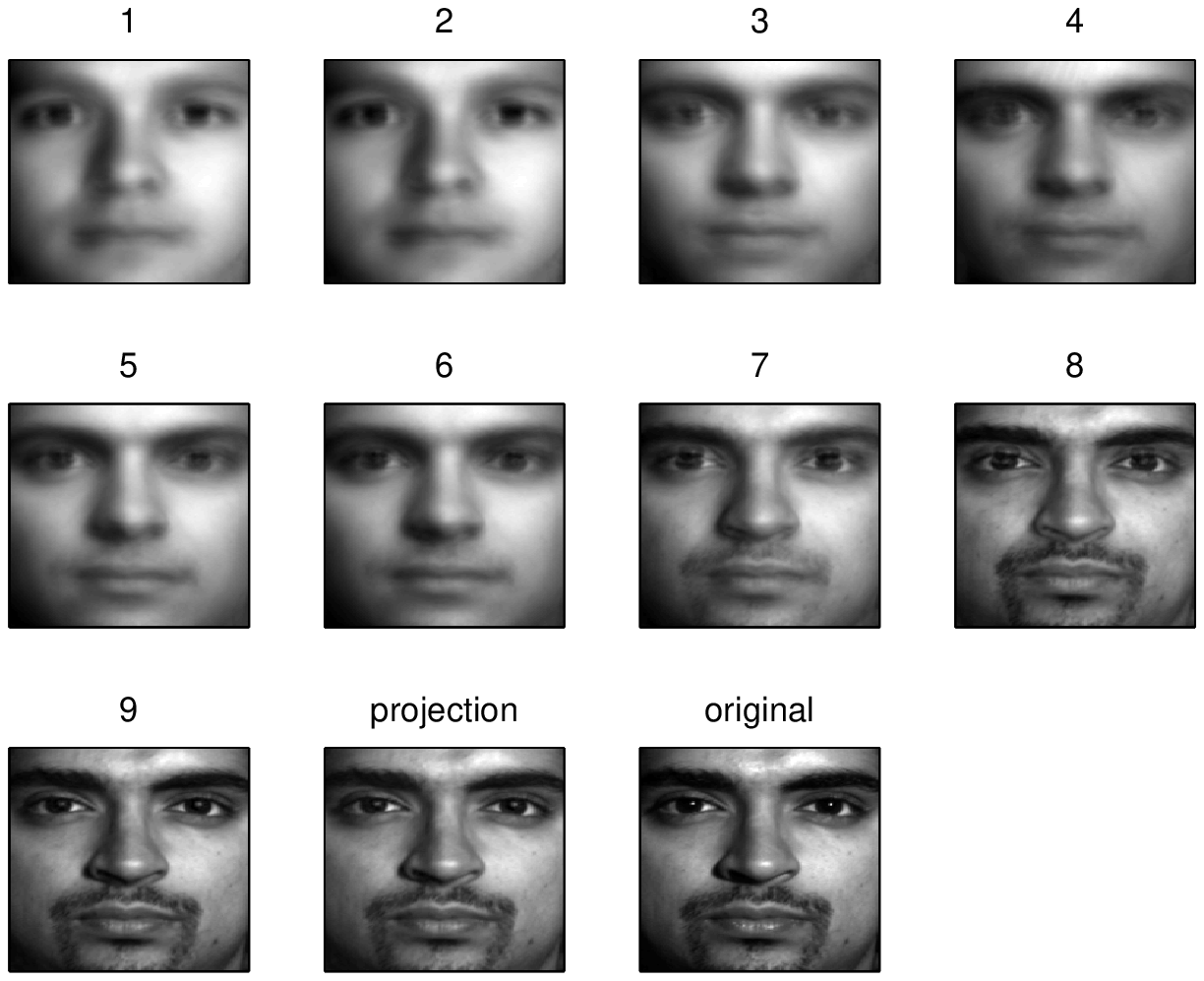}
\includegraphics[width=0.48\columnwidth]{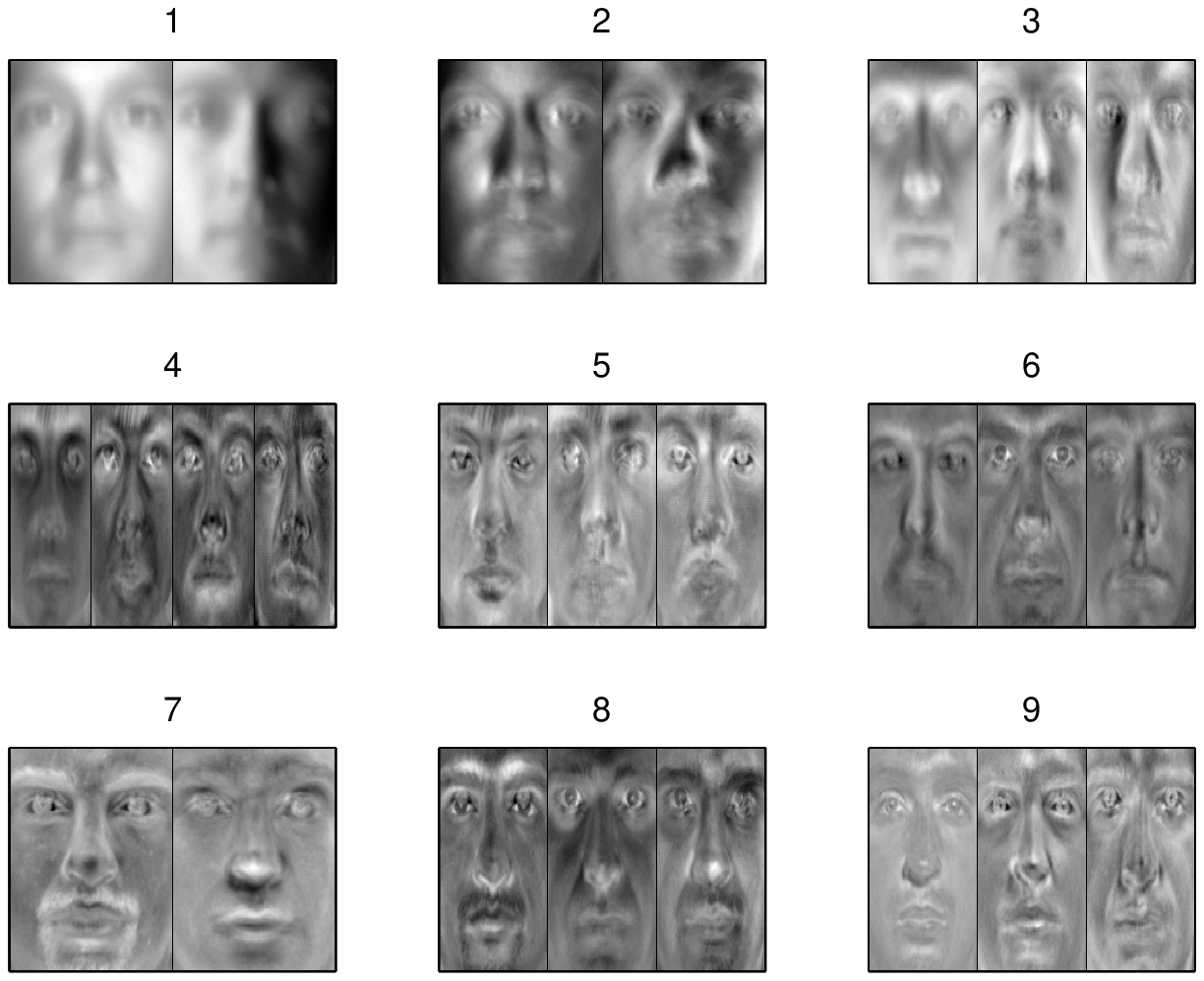}
\caption{Left: in images 1-9 we plot coarse-to-fine geometric wavelet approximations of the projection and the original data point (represented in the last two images). Right: elements of the wavelet dictionary (ordered from coarse to fine in 1-9) used in the expansion on the left.}
\label{f:YaleB_oneImage}
\end{figure}

%
% Orthogonal Geometric Multi-Resolution Analsysis
%
\section{Orthogonal Geometric Multi-Resolution Analsysis}
\label{sec:OGMRA}

%
% Orthogonal greedy encoding
%
%\subsection{Multiscale orthogonal wavelets} \label{sec:OGMRA}

Neither the vectors $Q_{\M_{j+1}}(x)$, nor any of the terms that comprise them, are in general orthogonal across scales.
On the one hand, this is natural since $\M$ is nonlinear, and the lack of orthogonality here is a consequence of that.
On the other hand, the $Q_{\M_{j+1}}(x)$ may be almost parallel across scales or, for example, the subspaces $W_\jpx$ may share directions across scales.
If that was the case, we could more efficiently encode the dictionary by not encoding shared directions twice.
A different construction of geometric wavelets achieves this.
We describe this modification with a coarse-to-fine algorithm, which seems most natural.
We start at scales $0$ and $1$, letting
\begin{equation}
\begin{aligned}
S_{0,x}      =V_{0,x} \quad,\quad
S_{1,x}	=S_{0,x}\oplus  W_{1,x} \quad, \quad U_{1,x} = W_{1,x},
\end{aligned}
\end{equation}
and for $j\ge 1$,
\begin{equation}
\begin{aligned}
U_{j+1,x} 	= P_{S_{j,x}^\perp}\left(W_{j+1,x} \right) \quad,\quad
S_{j+1,x}	= S_{j,x}\oplus U_{j+1,x}
\end{aligned}
\end{equation}
Observe that the sequence of subspaces $S_{j,x}$ is increasing:
$S_{0,x}\subseteq S_{1,x}\subseteq\dots\subseteq S_{j,x}\subseteq\dots$
and the subspace $U_{j+1,x}$ is exactly the orthogonal complement of $S_{j,x}$ into $S_{j+1,x}$.
This is a situation analogous to that of classical wavelet theory.
Also, we may write
\begin{equation}
\begin{aligned}
W_{j+1,x} = U_{j+1,x} \oplus P_{S_{j,x}}(W_{j+1,x})
\end{aligned}
\end{equation}
where the direct sum is orthogonal.
At each scale $j$ we do not need to construct a new wavelet basis for each $W_{j+1,x}$, but we only need to construct a new basis for $U_{j+1,x}$, and express $Q_{j+1,x}(x)$ in terms of this new basis,
and the wavelet and scaling function bases constructed at the previous scales.
This reduces the cost of encoding the wavelet dictionary as soon as $\dim(U_{j+1,x})<\dim(W_{j+1,x})$ which, as we shall see, may occur in both artificial and real world examples.
From a geometrical perspective, this roughly corresponds to the normal space to $\M$ at a point not varying much at fine scales.

Finally, we note that we can define new projections of a point $x$ into these subspaces $S_{j,x}$:
\begin{equation}
s_{j,x} = P_{S_{j,x}} (x-c_{j,x}) + c_{j,x}.
\end{equation}
Note that since $V_{j,x}\subseteq S_{j,x}$, $s_{j,x}$ is a better approximation than $x_j$ to $x$ at scale $j$ (in the least squares sense).
Also,
\begin{equation}
s_{j+1,x} - s_{j,x} = U_{j+1,x} U_{j+1,x}^* (x-c_{j+1,x}) + (I-P_{S_{j,x}})(c_{j+1,x}-c_{j,x}).
\end{equation}

\begin{figure}[htbp]
\centering
\fbox{
\begin{minipage}[t]{0.95\columnwidth}
\small{\bf {\tt OrthoGMRA = OrthogonalGMRA}\,\,$(X_n,\tau_0,\epsilon)$}
\vspace{.25 cm}

// \small{{\bf Input:}}\\
// $X_n$: a set of $n$ samples from $\M$ \\
// $\tau_0$: some method for choosing local dimensions \\
// $\epsilon$: precision

\vskip 0.1cm

// \small{{\bf Output:}} \\
// A tree $\mathcal{T}$ of dyadic cells $\{\C_\jk\}$ with their local means $\{\cjk\}$,
and a family of orthogonal geometric wavelets $\{U_\jk\}$, and corresponding translations $\{w_\jk\}$

\vskip 0.2cm

Construct the cells $\C_\jk$, and form a dyadic tree $\mathcal{T}$ with local centers $\ctr_\jk$.
\vskip0.05cm
Let $\cov_{0,k}=|C_{0,k}|^{-1}\sum_{x\in C_{0,k}} (x-\ctr_{0,k})(x-\ctr_{0,k})^*$, for $k\in \mathcal{K}_{0}$, and compute $\mathrm{SVD}(\cov_{0,k})=\Phi_{0,k} \Sigma_{0,k} \Phi_{0,k}^*$ (where the dimension of $\Phi_{0,k}$ is determined by $\tau_0$).
\vskip0.05cm
Set $j=0$ and $\Psi_{0,k}:=\Phi_{0,k}, w_{0,k} :=\ctr_{0,k}$
%\vskip 0.25cm
%W.l.o.g., let $J$ also be the height of $\mathcal{T}$.
\vskip0.05cm
Let $J$ be the maximum scale of the tree
\vskip0.05cm
{\bf while $j<J$}

\begin{enumerate}
  \item[] {\bf for $k\in\mathcal{K}_j$} \\
  Let $\Phi^{(cum)}_\jk = [\Psi_{\ell, k''}]_{0\leq \ell \leq j}$ be the union of all wavelet bases of the cell $C_\jk$ and its ancestors.
  If the subspace spanned by $\Phi^{(cum)}_\jk$ can approximate the cell within the given precision $\epsilon$,
  then remove all the offspring of $C_\jk$ from the tree. Otherwise, do the following.
  \begin{itemize}
     \item[] Compute $\cov_{j+1,k'}$ and $\Phi_{j+1,k'}$, for all $k'\in\children(j,k)$, as above
     \item[] For each $k'\in\children(j,k)$, construct the wavelet bases $U_\jpkp$ as the complement of $\Phi_{j+1,k'}$
in  $\Phi^{(cum)}_\jk$.  The translation $w_\jpkp$ is the projection of $c_{j+1,k'}-c_{j,k}$ into the space orthogonal to that spanned by the $\Phi^{(cum)}_\jk$.
  \end{itemize}
  \item[] {\bf end}
  \item[]  $j=j+1$
\end{enumerate}
{\bf end}

\end{minipage}}
\caption{Pseudo-code for the construction of an Orthogonal Geometric Multi-Resolution Analysis.}
\label{f:orthoGMRAalgo}
\end{figure}

\begin{figure}[htbp]
\centering
\fbox{
\begin{minipage}[t]{0.95\columnwidth}
\small{{\tt $\{q_\jx\}=$orthoFGWT(orthoGMRA$,x)$}}
\vspace{.1in}

// \small{{\bf Input:}} orthoGMRA structure, $x\in\M$

// \small{{\bf Output:}} A sequence $\{q_\jx\}$ of wavelet coefficients

\vskip 0.2cm
$r=x$\\
\noindent {\bf for $j=J$ down to $0$}

\begin{itemize}
\item[]   $q_\jx = U_\jx^*(r-\ctr_\jx)$
\item[]   $r = r - (U_\jx\cdot q_\jx +w_\jx)$
\end{itemize}
{\bf end}

%$\hat x\{0\} = \Phi_{0,x}^*(x-\ctr_{0,x})$

\end{minipage}}
\caption{Pseudo-code for the Forward Orthogonal Geometric Wavelet Transform}
\label{f:orthoFGWTalgo}
\end{figure}

\begin{figure}[htbp]
\centering
\fbox{
\begin{minipage}[t]{0.95\columnwidth}
\small{{\tt $\hat x=$orthoIGWT(orthoGMRA,$\{q_\jx\}$)}}
\vspace{.2cm}

// \small{{\bf Input:}} orthoGMRA structure, wavelet coefficients $\{q_\jx\}$

// \small{{\bf Output:}} Approximation $\hat x$ at scale $J$
\vskip 0.2cm

%$x =  \Phi_{0,x} \cdot \hat x\{0\} + \ctr_{0,x}$
$\hat x = 0$\\
{\bf for $j=0$ to $J$}

\begin{enumerate}
\item[]  $\hat x = \hat x + U_\jx q_\jx  + w_\jx$
\end{enumerate}
{\bf end}
%$x = x+\Phi_{(J,k)_x}\hat x_c(J)+x_{(J,k)_x}$
%{\bf end}
\end{minipage}}
\caption{Pseudo-code for the Inverse Orthogonal Geometric Wavelet Transform}
\label{f:orthoIGWTalgo}
\end{figure}

We display in Figs.~\ref{f:orthoGMRAalgo}, \ref{f:orthoFGWTalgo}, \ref{f:orthoIGWTalgo}
pseudo-codes for the orthogonal GMRA and the corresponding forward and inverse transforms.
The reader may want to compare with the corresponding routines for the regular GMRA construction, displayed in Figs.~\ref{f:GMRAalgo}, \ref{f:FGWTalgo}, \ref{f:IGWTalgo}.
Note that as the name suggests, the wavelet bases $\Psi_\jk$ along any path down the tree are mutually orthogonal.
Moreover, the local scaling function at each node of such a path is effectively the union of the wavelet bases of the node itself and its ancestors.
Therefore, the Orthogonal GMRA tree will have small height if the data set has a globally low dimensional structure, i.e., there is small number of normal directions in which the manifold curves.

%
% A connection to Fourier analysis and Littlewood-Paley decompositions
%
\subsection*{Example: A connection to Fourier analysis}
\label{s:band-limited}

\begin{figure}[t]
\includegraphics[width=.32\columnwidth]{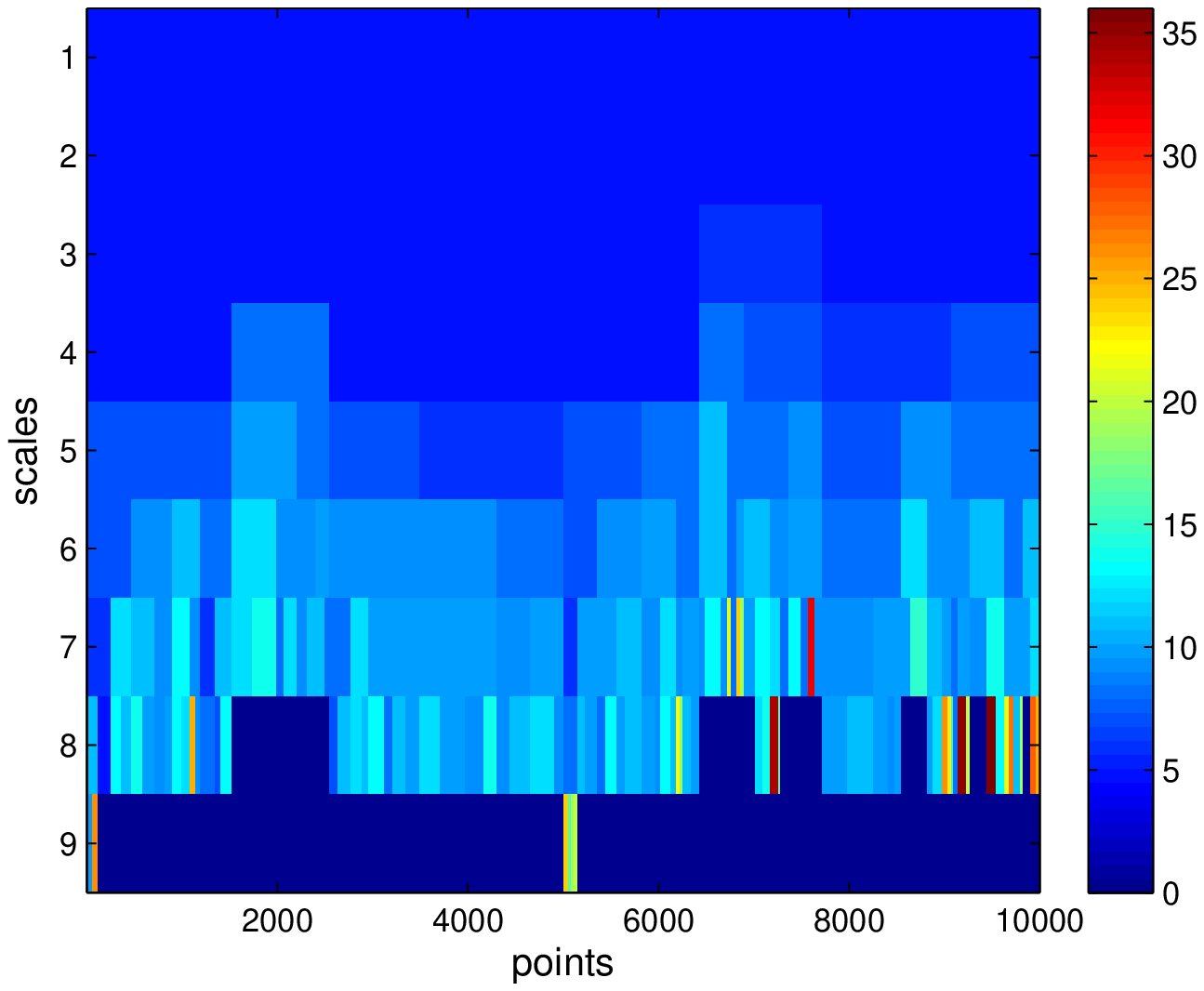}
\includegraphics[width=.32\columnwidth]{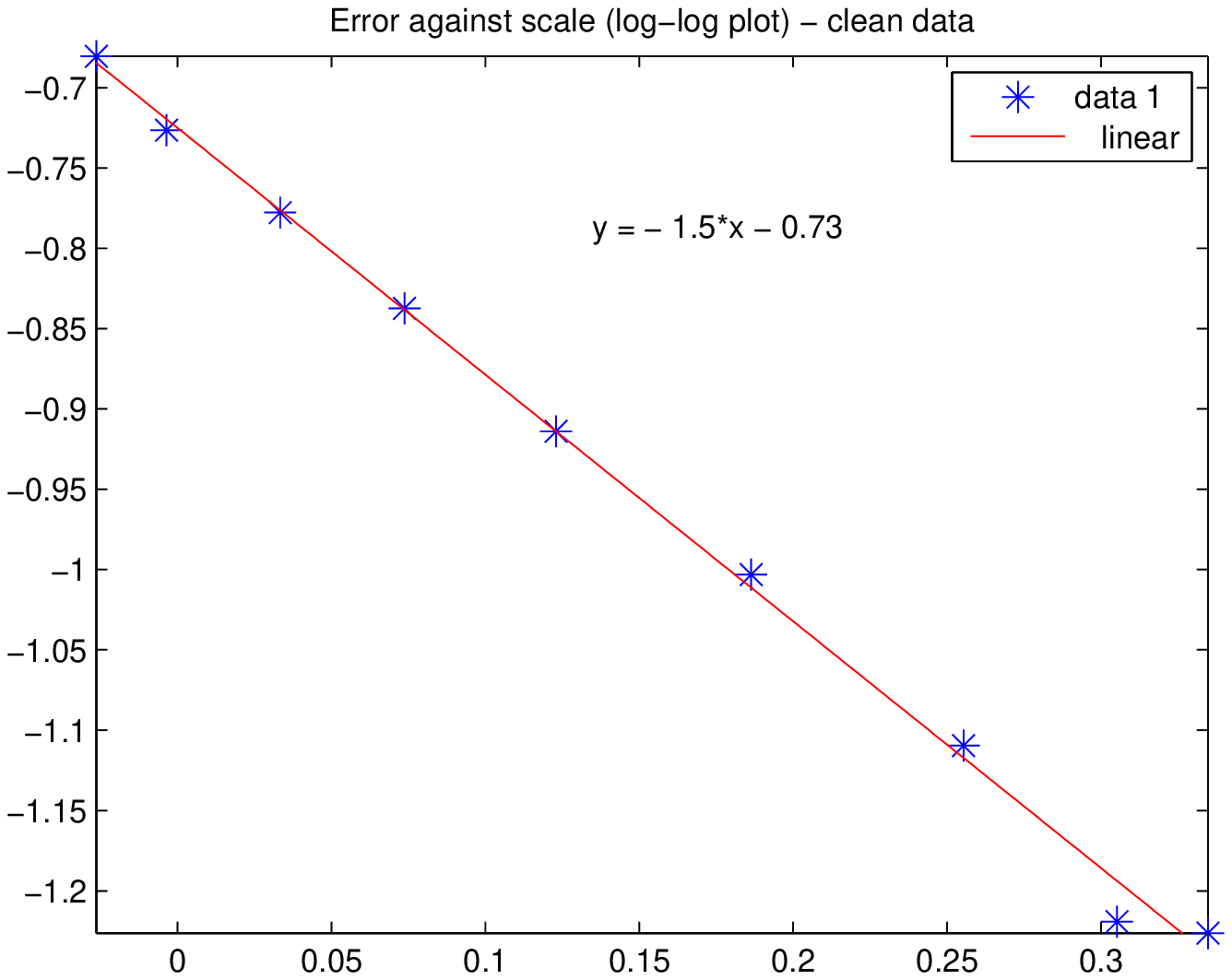}
\includegraphics[width=.32\columnwidth]{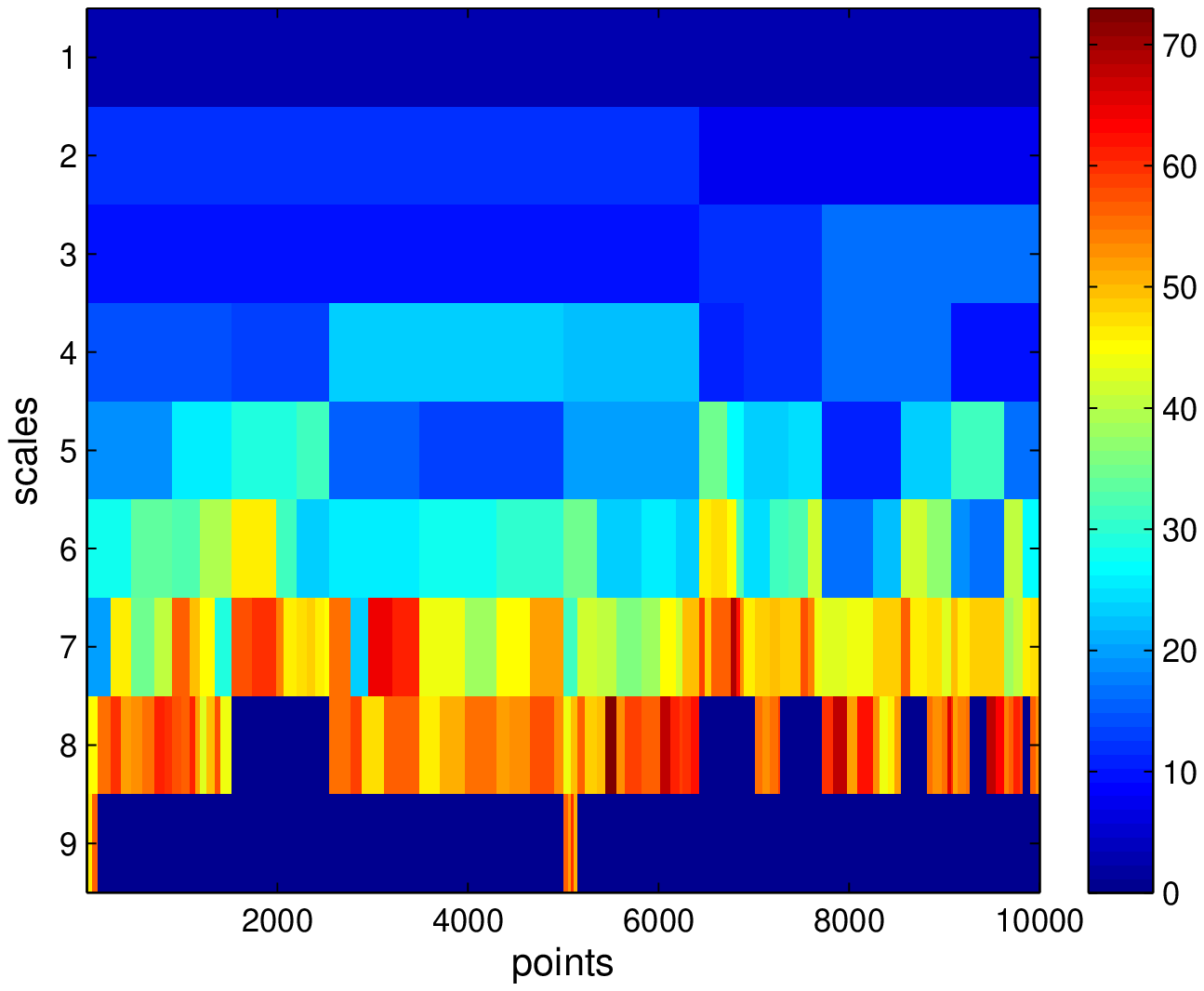}
\caption{We construct an Orthogonal Geometric Multi-Resolution Analysis (see Sec.~\ref{sec:OGMRA}) on a random sample of $10000$ band-limited functions. Left: dimension of the GMRA wavelet subspaces. Center: approximation error as a function of scale. Right: dominant frequency in each GMRA subspace, showing that frequencies are sorted from low (top, coarse GMRA scales) to high (bottom, fine GMRA scales). This implies that the geometric scaling function subspaces roughly corresponds to a Littlewood-Paley decomposition, and the GWT of a function $f$ corresponds to a rough standard wavelet transform.}
\label{f:BL}
\end{figure}

Suppose we consider the classical space of band-limited functions of band $B$:
\begin{equation}BF_B=\{f : \mathrm{supp.}\,\hat f\subseteq [-B\pi,B\pi]\}\,.\end{equation}
It is well-known that classical classes of smooth functions (e.g.~$W^{k,2}$) are characterized by their $L^2$-energy in dyadic spectral bands of the form $[-2^{j+1}\pi,-2^j\pi]\cup[2^j\pi,2^{j+1}\pi]$, i.e. by the $L^2$-size of their projection onto $BF_{2^{j+1}}\ominus BF_{2^j}$ (some care is in fact needed in smoothing these frequency cutoffs, but this issue is not relevant for our purposes here).
If we observe samples from such smoothness spaces, which kind of dictionary would result from our GMRA construction?
We consider the following example: we generate random smooth (band-limited!) functions as follows:
\begin{equation} f_\omega(x) = \sum_{j=0}^J a_j(\omega) \cos(jx)\end{equation}
with $a_j$ random Gaussian (or bounded) with mean $2^{-\lfloor \frac jJ \rfloor\alpha}$ and standard deviation $2^{-\lfloor \frac jJ \rfloor\alpha}\cdot\frac15$.
These functions are smooth and have comparable norms in a wide variety of smoothness spaces, e.g. $W^{2,2}$, so that they may thought of as approximately random samples from the unit ball in such space, intersected with band-limited functions.
We construct a GMRA on a random sample from this family of functions and see that it organizes this family of functions in a Littlewood-Paley type of decomposition:
the scaling function subspace at scale $j$ roughly corresponds to $BF_{2^{j+1}}\ominus BF_{2^j}$, and the GMRA of a point is essentially a block Fourier transform, where coefficients in the same dyadic band are grouped together.
This is as expected since the geometry of this data set is that of an ellipsoid with axes of equal length in each dyadic frequency band, and decreasing length as $j$ increases.
It follows that the coefficients in the FGWT of a function $f$ measure the energy of $f$ in dyadic bands in frequency, and is therefore an approximate FFT of sorts.
Finally, observe that the cost of the FGWT of a point $f$ is comparable to the cost of the Fast Fourier Transform.

%
% Greedy algorithms
%
\section{Variations, greedy algorithms, and optimizations} \label{sec:variations}

We discuss several techniques for reducing the encoding cost of the geometric wavelet dictionary and/or speeding up the decay of the geometric wavelet coefficients.

%
% Splitting of $W_\jpx$ for more efficient encoding of the wavelet dictionary
%
\subsection{Splitting of the wavelet subpaces} \label{subsec:splitting}
Fix a cell $\C_\jk$. For any $k' \in\children(j,k)$,
we may reduce the cost of encoding the subspace $W_\jpkp$ by splitting it into a part that depends only on $(\jk)$ and another on $(\jpkp)$:
\begin{equation}
W^\cap_\jk := \cap_{k'\in\children(j,k)} W_\jpkp
\end{equation}
and $W_\jpkp^\perp$ be the orthogonal complement of $W_\jk^\cap$ in $W_\jpkp$.
We may choose orthonormal bases $\Psi_\jk^\cap$ and $\Psi_\jpkp^\perp$ for $W_\jk^\cap$ and $W_\jpkp^\perp$ respectively,
and let $Q_\jk^\cap$, $Q_\jpkp^\perp$ be the associated orthogonal projections.
For the data in $\C_\jpkp$, we have therefore constructed the geometric wavelet basis
\begin{equation}
\Psi_\jpkp=[\Psi_\jk^\cap|\Psi_\jpkp^\perp]\,,
\label{e:Psijpxfewermw}
\end{equation}
together with orthogonal splitting of the projector
%$$\Psi_\jpkp \Psi_\jpkp^*=\Psi_\jk^\cap\left(\Psi_\jk^\cap\right)^*+\Psi_\jpkp^\perp\left(\Psi_\jpkp^\perp\right)^*\,,$$
\begin{equation}
Q_\jpkp = Q_\jk^\cap + Q_\jpkp^\perp,
\end{equation}
where the first term in the right-hand side only depends on the parent $(\jk)$, and the children-dependent information necessary to go from coarse to fine is encoded in the second term.
This is particularly useful when $\dim\left(W_\jk^\cap\right)$ is large relative to $\dim\left(W_\jpkp\right)$.

%
% A fine-to-coarse strategy with no tangential corrections
%
\subsection{A fine-to-coarse strategy with no tangential corrections}
\label{subsec:notangent}

In this variation, instead of the sequence of approximations $x_j=\Paff_{\V_{j,x}}(x)$ to a point $x\in \M$, we will use the sequence  $\tilde x_j=\Paff_{\V_{j,x}}(\tilde x_{j+1})$, for $j<J$, and $\tilde x_J:=x_J$.
The collection $\widetilde{\M}_j$ of $\tilde x_j$ for all $x \in \M$ is a coarse approximation to the manifold $\M$ at scale $j$.
This roughly corresponds to considering only the first term in \eqref{e:QMj2}, disregarding the tangential corrections.
The advantage of this strategy is that the tangent planes and the corresponding dictionary of geometric scaling functions do not need to be encoded.
The disadvantage is that the point $\tilde x_j$ does not have the same clear-cut interpretation as $x_j$ has, as it is not anymore the orthogonal projection of $x$ onto the best (in the least square sense) plane approximating $C_\jx$. Moreover, $\tilde x_j$ really depends on $J$: if one starts the transform at a different finest scale, the sequence changes.
Notwithstanding this, if we choose $J$ so that $||x_J-x||<\epsilon$, for some precision $\epsilon>0$, then this sequence does provide an efficient multi-scale encoding of $x_J$ (and thus of $x$ up to precision $\epsilon$).

The claims above become clear as we derive the equations for the transform:
%Noting that $\tilde x_{j+1}-\ctr_\jpx \in \myspan{\Phi_\jpx}$, we have
\begin{equation}
\begin{aligned}
Q_{\widetilde{\M}_j}(\tilde x_{j+1})&:= \tilde x_{j+1}-\tilde x_{j}=\tilde x_{j+1}-P_\jx(\tilde x_{j+1}-\ctr_\jx)-\ctr_\jx\\
&=\left(I-\Phi_\jx\Phi_\jx^*\right)\left((\tilde x_{j+1}-\ctr_\jpx)+(\ctr_\jpx-\ctr_\jx)\right).
\end{aligned}
\end{equation}
Noting that $\tilde x_{j+1}-\ctr_\jpx \in \myspan{\Phi_\jpx}$, we obtain
\begin{align}
Q_{\widetilde{\M}_j}(\tilde x_{j+1})
%&=\left(\left(\Psi_\jk^\cap\right) \left(\Psi_\jk^\cap\right)^*+ \left(\Psi_\jpkp^\perp\right)\left(\Psi_\jpkp^\perp\right)^*\right)(x-\ctr_\jpx)\\
%&\quad\quad + \left(I-\Phi_\jx\Phi_\jx^*\right)(\ctr_\jpx-\ctr_\jx)\\
%&\quad = \left(Q_\jk^\cap+Q_\jpx^\perp\right)\left(\tilde x_{j+1}-\ctr_\jpx\right)\\
& = \Psi_\jpx \Psi_\jpx^*\left(\tilde x_{j+1}-\ctr_\jpx\right)+ w_\jpx, %\left(I-\Phi_\jx\Phi_\jx^*\right)(\ctr_\jpx-\ctr_\jx).
\label{e:Qs}
\end{align}
where $\Psi_\jpx, w_\jpx$ are the same as in \eqref{e:QMj3}. By definition we still have the multi-scale equation
\begin{equation}
\tilde x_{j+1} = \tilde x_j + Q_{\widetilde \M_{j+1}}(\tilde x_{j+1})
\end{equation}
for $\{\tilde x_j\}$ defined as above.

%
% Extensions to points outside $\M$
%
\subsection{Out-of-sample extension}

In many applications it will be important to extend the geometric wavelet expansion to points that were not sampled, and/or to points that do not lie exactly on $\M$.
For example, $\M$ may be composed of data points satisfying a model, but noise or outliers in the data may not lie on $\M$.

Fix $x\in \mathbb{R}^D$, and let $J$ be the finest scale in the tree.
Let $c_{J,x}$ be a closest point to $x$ in the net $\{c_{J,k}\}_{k\in\mathcal{K}_J}$; such a point is unique if $x$ is close enough to $\M$.
For $j\le J$, we will let $(j,x)$ be the index of the (unique) cell at scale $j$ that contains $c_{J,x}$.
With this definition, we may calculate a geometric wavelet expansion of the point $P_{J,x}(x)$.
However, $e_{J}(x):=x-P_{J,x}(x)$ is large if $x$ is far from $\M$. We may encode this difference by greedily projecting it onto the family of linear subspaces $W_{J,x},\dots,W_{1,x}$ and $V_{0,x}$,
i.e. by computing
\begin{align}
Q_{\M^\perp,J}(x)&:=Q_{J,x}(e_{J}(x)), \nonumber\\
Q_{\M^\perp,J-1}(x)&:=Q_{J-1,x}(e_{J}(x)-Q_{\M^\perp,J}(x)), \nonumber\\
\dots & \dots \dots \nonumber \\
Q_{\M^\perp,0}(x)&:=P_{0,x}(e_{J}(x)-Q_{\M^\perp,J}(x)-\dots-Q_{\M^\perp,1}(x)).
\end{align}
These projections encode, greedily along the multi-scale ``normal'' subspaces $\{Q_{j,x}\}$.

%Observe that the above could have been repeated by substituting any coarser scale $j$ to the finest scale $J$, and one would have obtained, with perfect consistency, only the part of the GWT as defined above, from scale $j$ to $0$.

The computational complexity of this operation is comparable to that of computing two sets of wavelet coefficients, plus that of computing the nearest neighbor of $x$ among the centers $\{c_{J,k}\}_{k\in\mathcal{K}_J}$ at the finest scale. By precomputing a tree for fast nearest neighbor computations, this essentially requires $O(\log(|\mathcal{K}_J|))$ operations.
Also, observe that $|\mathcal{K}_J|$ in general does not depend on the number of points $n$, but on the precision in the approximation specified in the tree construction.

%
% Spin-cycling
%
\subsection{Spin-cycling: multiple random partitions and trees}
Instead of one multi-scale partition and one associated tree, in various situations it may be advantageous to construct multiple multi-scale partitions and corresponding trees. This is because a single partition introduces somewhat arbitrary cuts and possible related artifacts in the approximation of $\M$, and in the construction of the geometric wavelets in general.
Generating multiple partitions or families of approximations is a common technique in signal processing. For example, in \cite{Coifman95translation-invariantde-noising} it is shown that denoising by averaging the result of thresholding on multiple shifted copies of the Haar system is as optimal (in a suitable asymptotic, minimax sense) as performing the same algorithm on a single system of smoother wavelets (and in that paper the technique was called \emph{spin-cycling}).
In the study of approximation of metric spaces by trees \cite{Bartal96probabilisticapproximation}, it is well understood that using a suitable weighted average of metrics of suitably constructed trees is much more powerful than using a single tree (this may be seen already when trying to find tree metrics approximating the Euclidean metric on an interval).

In our context, it is very natural to consider a family of trees and the associated geometric wavelets, and then perform operations on either the union of such geometric wavelet systems (which would be a generalization of sorts of tight frames, in a geometric context), or perform operations on each system independently and then average. In particular, the construction of trees via cover trees \cite{LangfordICML06-CoverTree} is very easily randomized, while still guaranteeing that each instance of such trees is well-balanced and well-suited for our purposes. We leave a detailed investigation to a future publication.

%
% Data Representation and Compression
%
\section{Data representation and compression} \label{sec:representation}

A generic point cloud with $n$ points in $\R^D$ can trivially be stored in space $Dn$.
If the point cloud lies, up to, say, a least-squares error (relative or absolute) $\epsilon$ in a linear subspace of dimension $\dimX_\epsilon\ll \dimamb$, we could encode $n$ points in space
\begin{equation}
\underbrace{\dimamb\dimX_\epsilon}_{\begin{smallmatrix}\text{cost of}\\  \text{encoding basis}\end{smallmatrix}}+\underbrace{n \dimX_\epsilon}_{\begin{smallmatrix}\text{cost of encoding} \\ \text{$n$\ points}\end{smallmatrix}}= \dimX_\epsilon(\dimamb+n),
\label{e:costSVDepsilon}
\end{equation}
which is clearly much less than $n\dimamb$. % as soon as $\dimX_\epsilon<\dimamb$.
In particular, if the $\dimX$-dimensional point cloud lies is a $\dimX$-dimensional subspace, then $\dimX_\epsilon=d$ and
\begin{equation}
\dimX(\dimamb+n)\,.
\label{e:costSVD}
\end{equation}

Let us compute the cost of encoding with a geometric multi-resolution analysis a manifold $\M$ of dimension $\dimX$ sampled at $n$ points, and fix a precision $\epsilon>0$.
We are interested in the case $n\rightarrow+\infty$.
The representation we use is, as in \eqref{e:WD}:
\begin{equation}x\sim x_J= P_{\M_0}(x)+\sum_{j=1}^{J} Q_{\M_j}(x),\end{equation}
where we choose the smallest $J$ such that $||x-x_J||<\epsilon$. In the case of a $\mathcal{C}^2$ manifold, $J = \log_2 \epsilon^{-\frac 12}$ because of Theorem \ref{t:GWT}.
However, $\dimX_\epsilon$ as defined above with global SVD may be as large as $\dimamb$ in this context, even for $\dimX=1$.

Since $\M$ is nonlinear, we expect the cost of encoding a point cloud sampled from $\M$ to be larger than the cost \eqref{e:costSVD} of encoding a $d$-dimensional flat $\M$; however the geometric wavelet encoding is not much more expensive, having a cost:
\begin{equation}
\underbrace{dD+2\epsilon^{-\frac d2}(d^\perp+2^{-d}d^\cap+2)D}_{\begin{smallmatrix}\text{cost of}\\  \text{encoding basis}\end{smallmatrix}}+\underbrace{nd(1+\log_2 \epsilon^{-\frac12})}_{\begin{smallmatrix}\text{cost of encoding} \\ \text{$n$\ points}\end{smallmatrix}}
\end{equation}
In Sec. \ref{subsec:SVDcompare} we compare this cost with that in \ref{e:costSVDepsilon} on several data sets.
To see that the cost of the geometric wavelet encoding is as promised, we start by counting the geometric wavelet coefficients used in the multi-scale representation.
Recall that $d^w_\jx = \mathrm{rank}(\Psi_\jx)$ is the number of wavelet coefficients at scale $j$ for the given point $x$. Clearly, $d^w_\jk \leq d$.
Then, the geometric wavelet transform of all points takes space at most
\begin{equation}n d + \sum_{j=1}^J \sum_x d^w_\jx \le nd+ndJ \le nd(1+\log_2 \epsilon^{-\frac12}),\end{equation}
independently of $D$. The dependency on $n,d$ is near optimal, and this shows that data points have a sparse, or rather, compressible, representation in terms of geometric wavelets.
Next we compute the cost of the geometric wavelet dictionary, which contains the geometric wavelet bases $\Psi_\jk$, translations $w_\jk$, and cell centers $c_\jk$.
If we add the tangential correction term as in \eqref{e:QMj3}, then we should also include the geometric scaling functions $\Phi_\jk$ in the cost.
Let us assume for now that we do not need the geometric scaling functions.
Define
\begin{align}
d_\jk^\cap &:= \mathrm{rank}(\Psi_\jk^\cap), \\
d_\jpkp^\perp &:= \mathrm{rank}(\Psi_\jpkp^\perp)
\end{align}
and assume that $d_\jk^\cap \le d^\cap$, $d_\jpkp^\perp \le d^\perp$ for fixed constants $d^\cap, d^\perp \leq d$.
The cost of encoding the wavelet bases $\{\Psi_\jk\}_{k\in\mathcal{K}_j, 0\leq j\leq J}$ is at most
\begin{align}
\underbrace{dD}_{\mathrm{cost\ of\ }\Psi_{0,k}} &+\sum_{j=0}^{J-1}\underbrace{2^{d j}}_{\begin{smallmatrix}\text{\# cells} \\ \text{at scale $j$}\end{smallmatrix}} \underbrace{d^\cap D}_{\begin{smallmatrix} \text{cost of}\\ \text{$\Psi_{j,k}^\cap$}\end{smallmatrix}}
+ \underbrace{2^{d (j+1)}}_{\begin{smallmatrix}\text{\# cells} \\ \text{at scale $j+1$}\end{smallmatrix}} \underbrace{d^\perp D}_{\begin{smallmatrix} \text{cost of}\\ \text{$\Psi_{j+1,k'}^\perp$}\end{smallmatrix}} \nonumber \\
& = dD+\frac{2^{dJ}-1}{2^d-1} (d^\cap D + 2^d d^\perp D) \le dD+2\epsilon^{-\frac d2}(d^\perp+2^{-d}d^\cap)D.
\end{align}
The cost of encoding $w_\jk, c_\jk$ is
\begin{equation}
2\sum_{j=0}^J 2^{dj} D \le 2\cdot 2^{dJ+1} \cdot D = 4 D\epsilon^{-\frac d2}.
\end{equation}

Therefore, the overall cost of the dictionary is
\begin{equation}
dD+2\epsilon^{-\frac d2}(d^\perp+2^{-d}d^\cap+2)D.
\end{equation}

In the case that we also need to encode the geometric scaling functions $\Phi_\jk$, we need an extra cost of \begin{equation}\sum_{j=0}^J 2^{dj} dD \leq 2\epsilon^{-\frac d2}dD.\end{equation}

%
% Tree pruning and dictionary optimization
%

\subsection{Pruning of the geometric wavelets tree} \label{subsec:pruning}

In this section we discuss how to prune the geometric wavelets tree with the goal of minimizing the total
cost for \textit{\epsEncoding} a given data set, i.e., encoding the data within the given precision $\epsilon>0$.
Since we are not interested in the intermediate approximations, we will adpot the GMRA version without adding the tangential corrections (see Sec.~\ref{subsec:notangent}) and thus there is no need to encode the scaling functions.
The encoding cost includes both the cost of the dictionary, defined for simplicity as the number of dictionary elements $\{\Psi_\jk, w_\jk, \ctr_\jk\}$ multiplied by the ambient dimension $\dimamb$,
and the cost of the coefficients, defined for simplicity to be the number of nonzero coefficients required to reconstruct the data up to precision $\epsilon$.

\subsubsection{Discussion}
We fix an arbitrary nonleaf node $C_{j,k}$ of the partition tree $\mathcal{T}$
and discuss how to \epsEncode the local data in $C_{j,k}$ in order to achieve
minimal encoding cost.
We assume that the data in the children nodes $C_{j+1,k'}, k'\in\children(j,k)$, has been optimally \epsEncoded
by some methods, with scaling functions $\Phi_{j+1,k'}$ of dimensions $d_{j+1,k'}$
and corresponding encoding costs $\cost_{j+1,k'}$.
For example, when $C_{j+1,k'}$ is a leaf node,
it can be optimally \epsEncoded by using a local PCA plane of minimal dimension $d^\epsilon_{j+1,k'}$, with the corresponding encoding cost
\begin{equation}\label{eq:encodingCosts_aleafNode}
\cost_{j+1,k'} = n_{j+1,k'} \cdot d^\epsilon_{j+1,k'} + D \cdot d^\epsilon_{j+1,k'} +  D,
%= (n_{j,k}+D) \cdot d^\epsilon_{j,k} +D.
\end{equation}
where $n_{j+1,k'}$ is the size of this node.

We consider the following ways of \epsEncoding the data in $C_{j,k}$:
\begin{enumerate}
\item[(I)] using the existing methods for the children $C_{j+1,k'}$ to encode the data in $C_{j,k}$ separately;
\item[(II)] using only the parent node and approximating the local data by a PCA plane of
minimal dimension $d^\epsilon_{j,k}$ (with basis $\Phi^\epsilon_{j,k}$);
\item[(III)] using a multi-scale structure to encode the data in the node $C_{j,k}$,
with the top $d^w_{j,k}$ PCA directions $\Phi^w_{j,k}$ being the scaling function at the parent node and
$d^w_{j+1,k'}$ dimensional wavelets encoding differences between $\Phi_{j+1,k'}$ and $\Phi^w_{j,k}$.
Here, $0\leq d^w_{j,k} \leq d^\epsilon_{j,k}$.
\end{enumerate}

We refer to the above methods as \emph{children-only} encoding, \emph{parent-only} encoding and \emph{wavelet} encoding, respectively.
We make the following comments. First, method (I) leads to the sparsest coefficients for each point,
while method (II) produces the smallest dictionary.
Second, in method (III), it is possible to use other combinations of the PCA directions as the scaling function for the parent,
but we will not consider those in this paper.
Lastly, the children-only and parent-only encoding methods
can be thought of corresponding to special cases of the wavelet encoding method, i.e.,
when $d^w_{j,k}=0$ and $d^w_{j,k}=d^\epsilon_{j,k}$, respectively.

We compare the encoding costs of the three methods above.
Suppose there are $n_{j,k}$ points in the node $C_{j,k}$ and
$n_{j+1,k'}$ points in each $C_{j+1,k'}$, so that $n_{j,k} =
\sum_{k'} n_{j+1,k'}$.
When we encode the data in $C_{j,k}$ with a $d^\epsilon_{j,k}$
dimensional plane, we need space
\begin{equation}\label{eq:encodingCosts_parentOnly}
n_{j,k} \cdot d^\epsilon_{j,k} + D \cdot d^\epsilon_{j,k} +  D.
%= (n_{j,k}+D) \cdot d^\epsilon_{j,k} +D.
\end{equation}
If we use the children nodes to encode the data in $C_{j,k}$, the cost is
\begin{equation}\label{eq:encodingCosts_childrenOnly}
\sum_{k'} \cost_{j+1,k'}.
\end{equation}
The encoding cost of the wavelet encoding method has a more
complex formula, and is obtained as follows.
Suppose that we put at the parent node a $d^w_{j,k}$ dimensional scaling function consisting of the top $d^w_{j,k}$ principal vectors,
where $0 \leq d^w_{j,k}\leq d^\epsilon_{j,k}$, and that $\Psi_{j+1,k'}$ are the corresponding wavelet bases for the children nodes.
Let $d^\cap_{j,k}\geq 0$ be the dimension of the intersection of the wavelet functions,
and write $d^w_{j+1,k'} = d^\cap_{j,k} + d^\perp_{j+1,k'}$. Note that the intersection only needs to be stored once for all children.
Then the overall encoding cost is
\begin{align}\label{eq:encodingCosts_wavelet}
\cost^w_{j, k}& =\underbrace{\sum_{k'} \cost_{j+1,k'} - d_{j+1,k'} (n_{j+1,k'}+D)}_\text{\it
children excluding the scaling functions and coefficients} \nonumber
+ \quad \underbrace{n_{j,k} \cdot d^w_{j,k} + D \cdot d^w_{j,k} +  D}_\text{\it the parent} \nonumber \\
&+ \underbrace{n_{j,k} \cdot d^\cap_{j,k} + D \cdot d^\cap_{j,k}}_\text{\it intersection of children wavelets}
+ \underbrace{\sum_{k'} n_{j+1,k'} \cdot d^\perp_{j+1,k'} + D \cdot
d^\perp_{j+1,k'} + D}_\text{\it children-specific wavelets}
\nonumber \\
& = \underbrace{\sum_{k'} \cost_{j+1,k'} - (d_{j+1,k'}-d^\perp_{j+1,k'})\cdot(n_{j+1,k'}+D)}_\text{\it new cost for children}
+ \underbrace{(n_{j,k}+D) \cdot (d^w_{j,k}+d^\cap_{j,k})}_\text{\it parent and children intersection}\nonumber\\
&\quad  + \underbrace{D + \sum_{k'} D}_\text{\it parent center and wavelet translations}
\end{align}

Once the encoding costs in \eqref{eq:encodingCosts_parentOnly}, \eqref{eq:encodingCosts_childrenOnly}
and \eqref{eq:encodingCosts_wavelet} (for all $0 \leq d^w_{j,k}\leq d^\epsilon_{j,k}$)
are all computed, we pick the method with the smallest cost for encoding the data in $C_{j,k}$, and also update $\Phi_\jk, \cost_{j,k}$ correspondingly. We propose in the next section a pruning algorithm for practical realization of the above ideas.

\subsubsection{A pruning algorithm}
The algorithm requires as input a data set $X_n$
and a precision parameter $\epsilon>0$,
and outputs a forest with orthonormal matrices $\{\Phi_{j,k}\}$ and $\{\Psi_{j,k}\}$
attached to the nodes and an associated cost function $\cost_{j,k}$
defined on every node of the forest quantifying the cost of optimally \epsEncoding the data in that node.

Our strategy is bottom-up. That is, we start at the leaf nodes and $\epsilon$-encode them
by using local PCA planes of minimal dimensions, and let $\{\Phi_{j,k}\}$ and $\{\cost_{j,k}\}$ be their bases and corresponding encoding costs.
We then proceed to their parents and determine the optimal way of encoding them using  \eqref{eq:encodingCosts_parentOnly}, \eqref{eq:encodingCosts_childrenOnly}
and \eqref{eq:encodingCosts_wavelet}.
If the parent-only encoding achieves the minimal encoding cost,
then we remove all the offspring of this node from the tree, including the children.
If the children-only is the best,
then we separate out the children subtrees from the tree and form new trees
(we also remove the parent from the original tree and discard it).
Note that these new trees are already optimized,
thus we will not need to examine them again.
If the wavelet encoding with some $\Phi^w_{j,k}$ (and corresponding wavelet bases $\Psi_{j+1,k'}$) does the best,
then we update $\Phi_{j,k}:=[\Phi^w_{j,k}\, \Phi^\cap_{j,k}]$ and $\cost_{j,k}$ accordingly
and let $\Phi_{j+1,k'}$ store the complement of $\Phi^\cap_{j,k}$ in $\Phi_{j+1,k'}$.
We repeat the above steps for higher ancestors until we reach the root of the tree.
We summarize these steps in Fig.~\ref{f:pruningGWT} below.

\begin{figure}[htbp]
\centering
\fbox{
\begin{minipage}[t]{0.95\columnwidth}
\small{\bf {\tt PrunGMRA = PruningGMRA}\,\,$(X_n,\epsilon)$}
\vspace{.25 cm}x

// \small{{\bf Input:}}\\
// $X_n$: a set of $n$ samples from $\M$ \\
// $\epsilon$: precision

\vskip 0.1cm

// \small{{\bf Output:}} \\
// A forest $\mathcal{F}$ of dyadic cells $\{\C_\jk\}$ with their local means $\{\cjk\}$ and PCA bases $\{\Phi_\jk\}$, and
a family of geometric wavelets $\{\Psi_\jk\},\{w_\jk\}$, as well as encoding costs $\{\cost_\jk\}$, associated to the nodes

\vskip 0.2cm

Construct the dyadic cells $\C_\jk$, and form a tree $\mathcal{T}$ with local centers $\ctr_\jk$.
\vskip0.05cm
For every leaf node in the tree $\mathcal{T}$, compute the minimal dimension $d_{j,k}^\epsilon$
and corresponding basis $\Phi_{j,k}$ and encoding costs $\cost_{j,k}$ for achieving precision $\epsilon$
\vskip 0.1cm
{\bf for $j=J-1$ down to $1$}
\begin{itemize}
\item[] Find all the nonleaf nodes of the tree $\mathcal{T}$ at scale $j$
\item[] For each of the nodes $(j,k), k\in \mathcal{K}_j$,
   \begin{enumerate}
      \item[(1)] Compute the encoding costs of the three methods, i.e., parent-only, children-only, and wavelet,
using equations \eqref{eq:encodingCosts_parentOnly}, \eqref{eq:encodingCosts_childrenOnly}
and \eqref{eq:encodingCosts_wavelet}.
      \item[(2)] Update $\cost_{j,k}$ with the minimum cost.\\
            \textbf{if} parent-only is the best, \\
             delete all the offspring of the node from $\mathcal{T}$, and let $\Phi_{j,k} = \Phi^\epsilon_{j,k}$\\
            \textbf{elseif} children-only is the best,\\
             separate out the children subtrees from $\mathcal{T}$ and form new trees, and also remove and discard the parent node\\
            \textbf{else}\\ %{\% wavelet is the best}\\
             update $\Phi_{j,k}:=[\Phi^w_{j,k} \Phi^\cap_{j,k}]$ and $\cost_{j,k}$ accordingly
and let $\Phi_{j+1,k'}$ store the complement of $\Phi^\cap_{j,k}$ in $\Phi_{j+1,k'}$.\\
            \textbf{end}
   \end{enumerate}
\end{itemize}
\textbf{end}
\end{minipage}}
\caption{Pseudo-code for the construction of the Pruning Geometric Wavelets}
\label{f:pruningGWT}
\end{figure}

\subsection{Comparison with SVD} \label{subsec:SVDcompare}
In this section we compare our algorithm with Singular Value Decomposition (SVD) in terms of encoding cost for various precisions.
We may think of the SVD, being a global analysis, as providing a sort of Fourier geometric analysis of the data, to be contrasted with our GMRA, a multi-scale wavelet analysis.
We use the two real data sets above, together with a new data set, the Science News, which comprises about $1100$ text documents, modeled as vectors in $1000$ dimensions, whose $i$-th entry is the frequency of the $i$-th word in a dictionary (see \cite{CM:MsDataDiffWavelets} for detailed information about this data set).
For GMRA, we now consider three different versions: (1) the regular GMRA, but with the optimization strategies discussed in Secs.~\ref{subsec:splitting} and \ref{subsec:notangent}
(2) the orthogonal GMRA (in Sec.~\ref{sec:OGMRA}) and (3) the pruning GMRA (in Sec.~\ref{subsec:pruning}).
For each version of the GMRA, we threshold the wavelet coefficients to study the rates of change of the approximation errors and encoding costs.
We present three different costs: one for encoding the wavelet coefficients, one for the dictionary, and one for both (see Fig.~\ref{f:ScienceNews_distortionCurves}).

We compare these curves with those of SVD, which is applied in two ways: first, we compute the SVD costs and errors using all possible PCA dimensions;
second, we gradually threshold the full SVD coefficients and correspondingly compress the dictionary (i.e., discard those multiplying identically zero coefficients).
The curves are superposed in the same plots (see the black curves in Fig.~\ref{f:ScienceNews_distortionCurves}).

\begin{figure}
\includegraphics[width=.32\columnwidth]{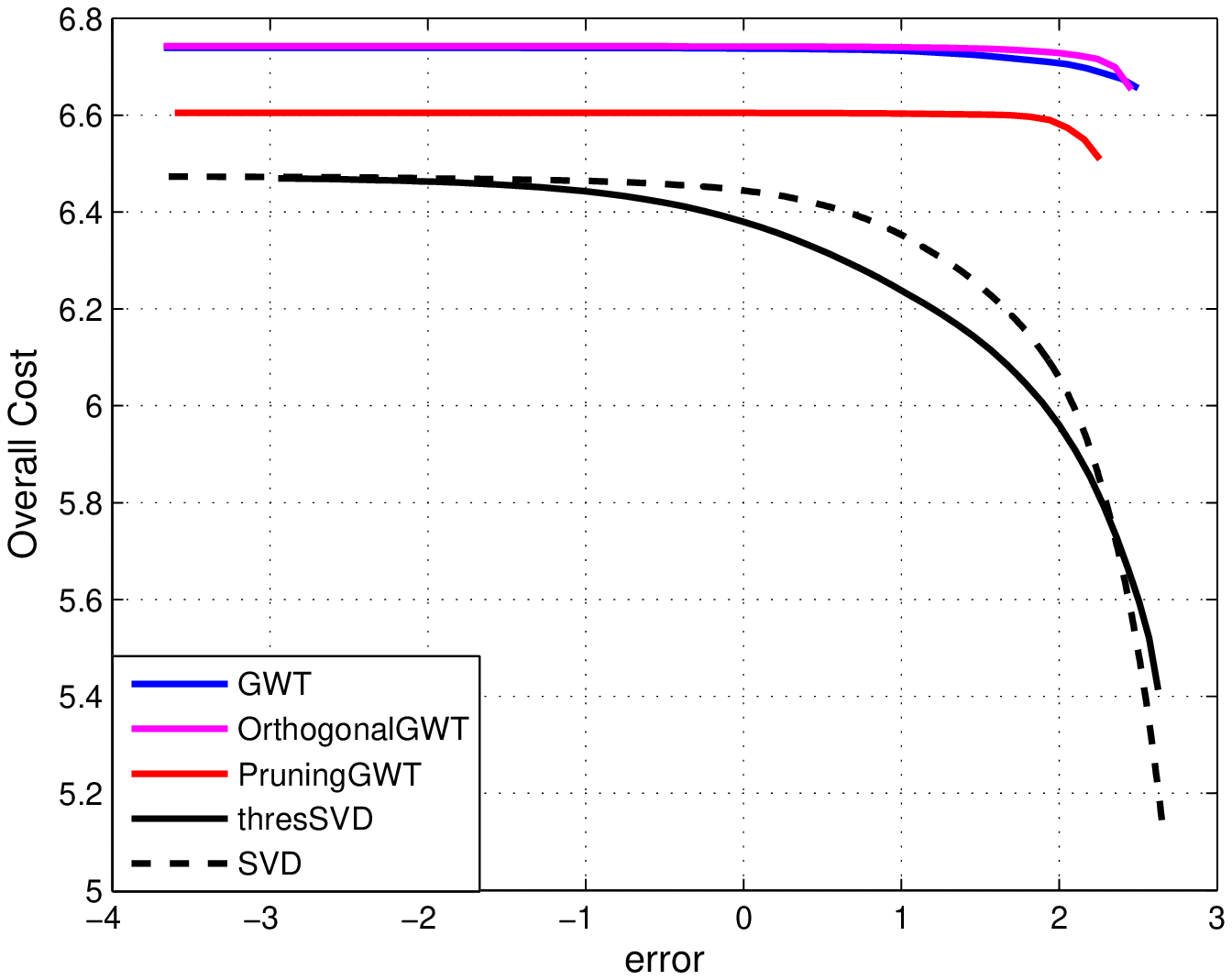}
\includegraphics[width=.32\columnwidth]{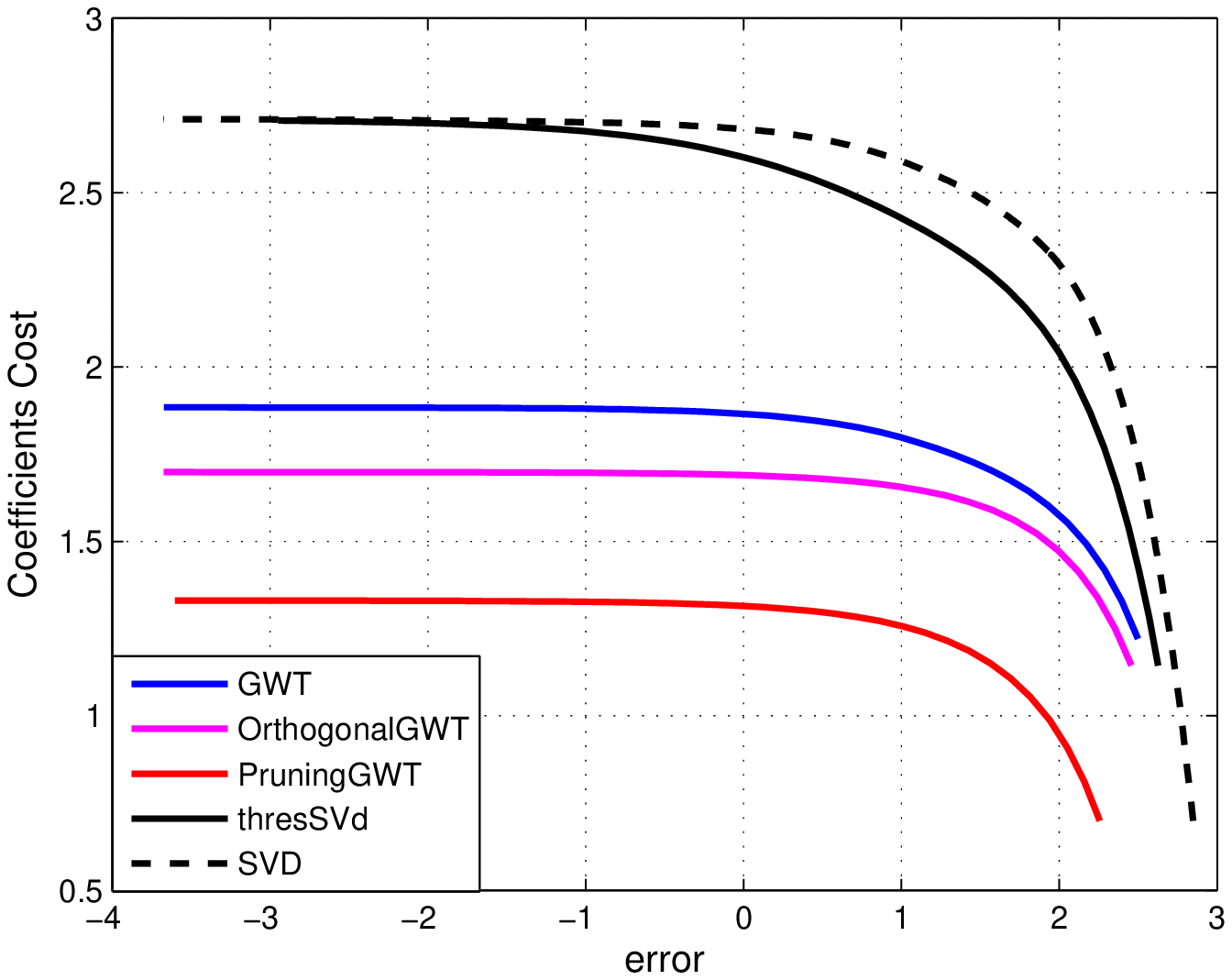}
\includegraphics[width=.32\columnwidth]{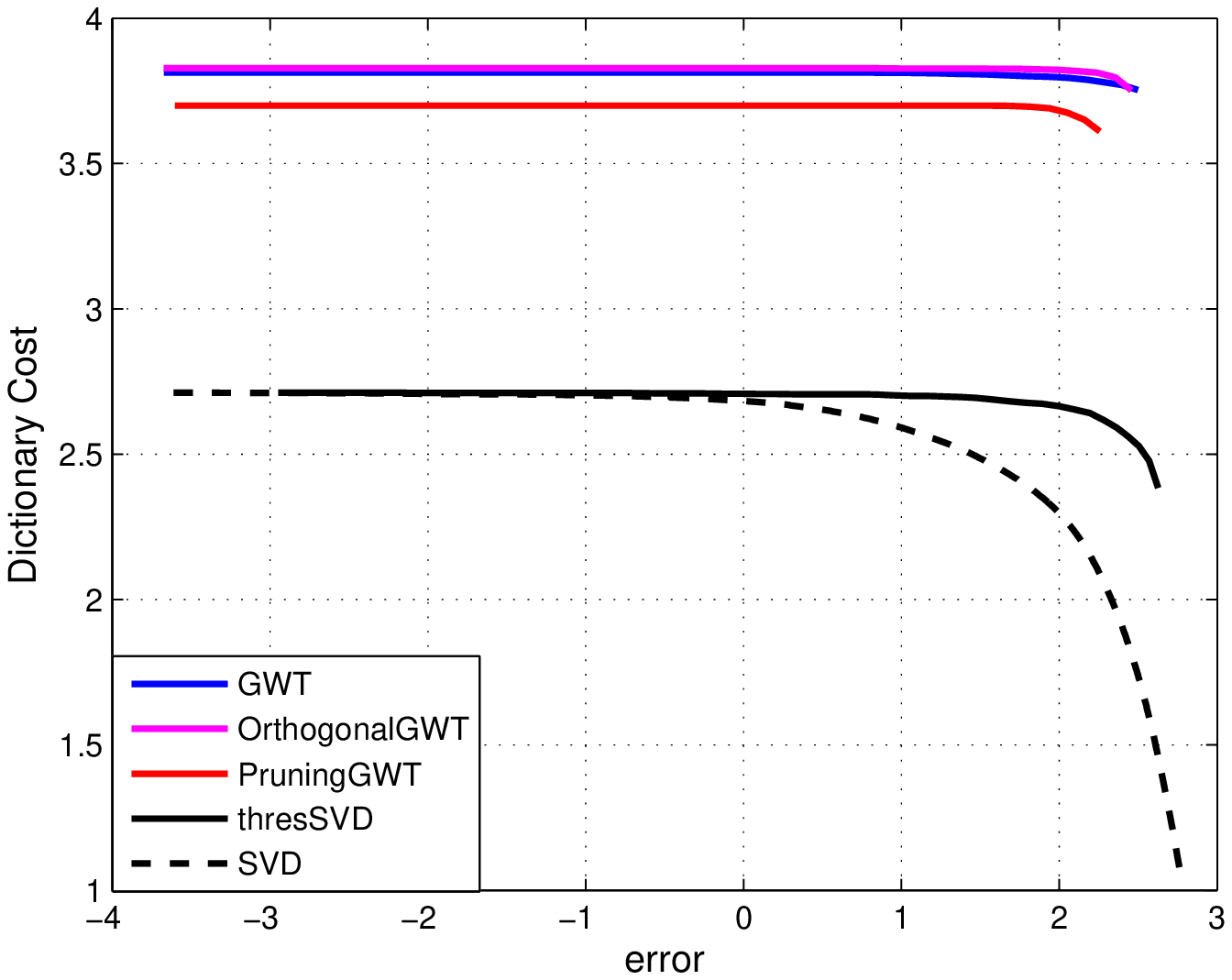}
\\
\includegraphics[width=.32\columnwidth]{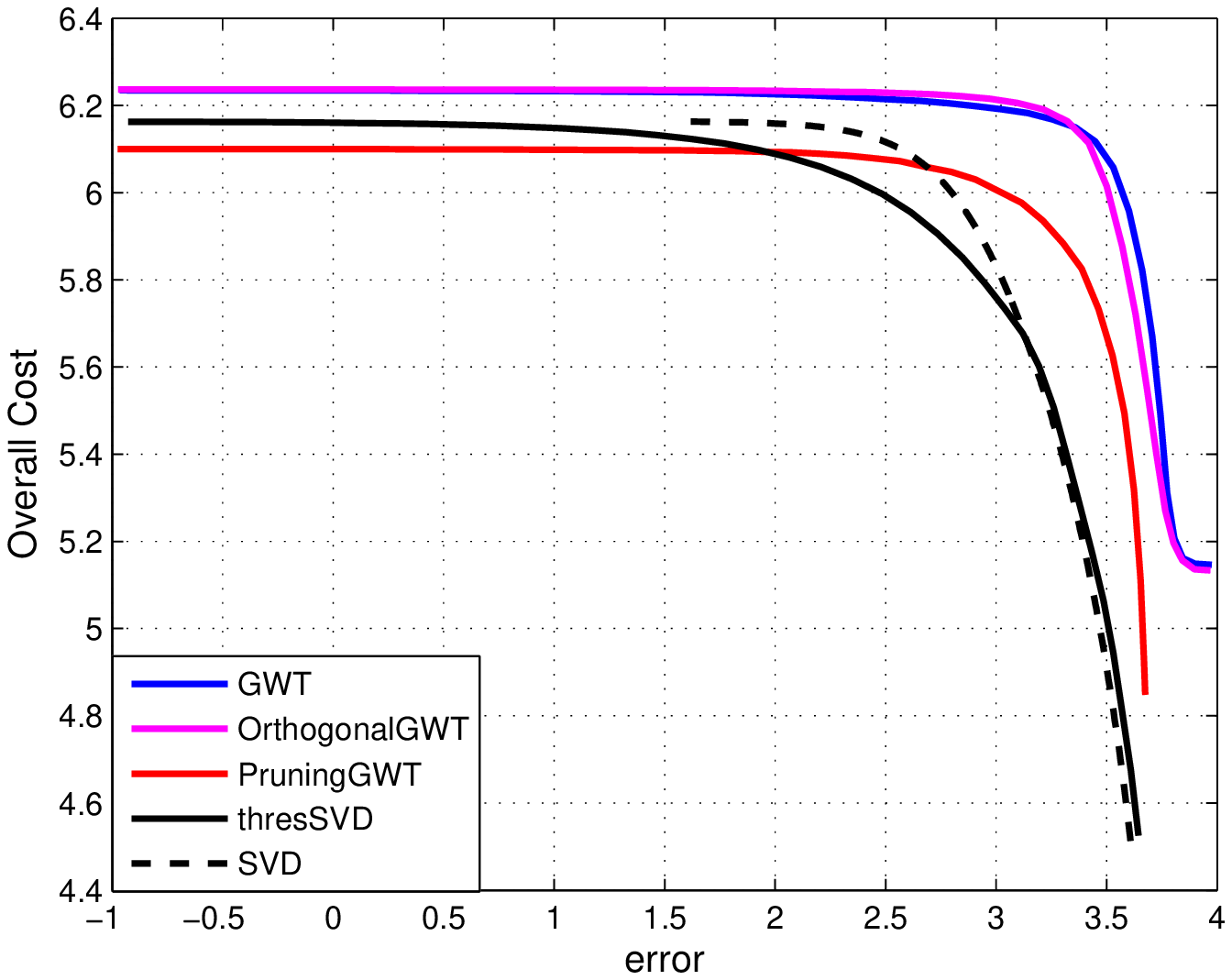}
\includegraphics[width=.32\columnwidth]{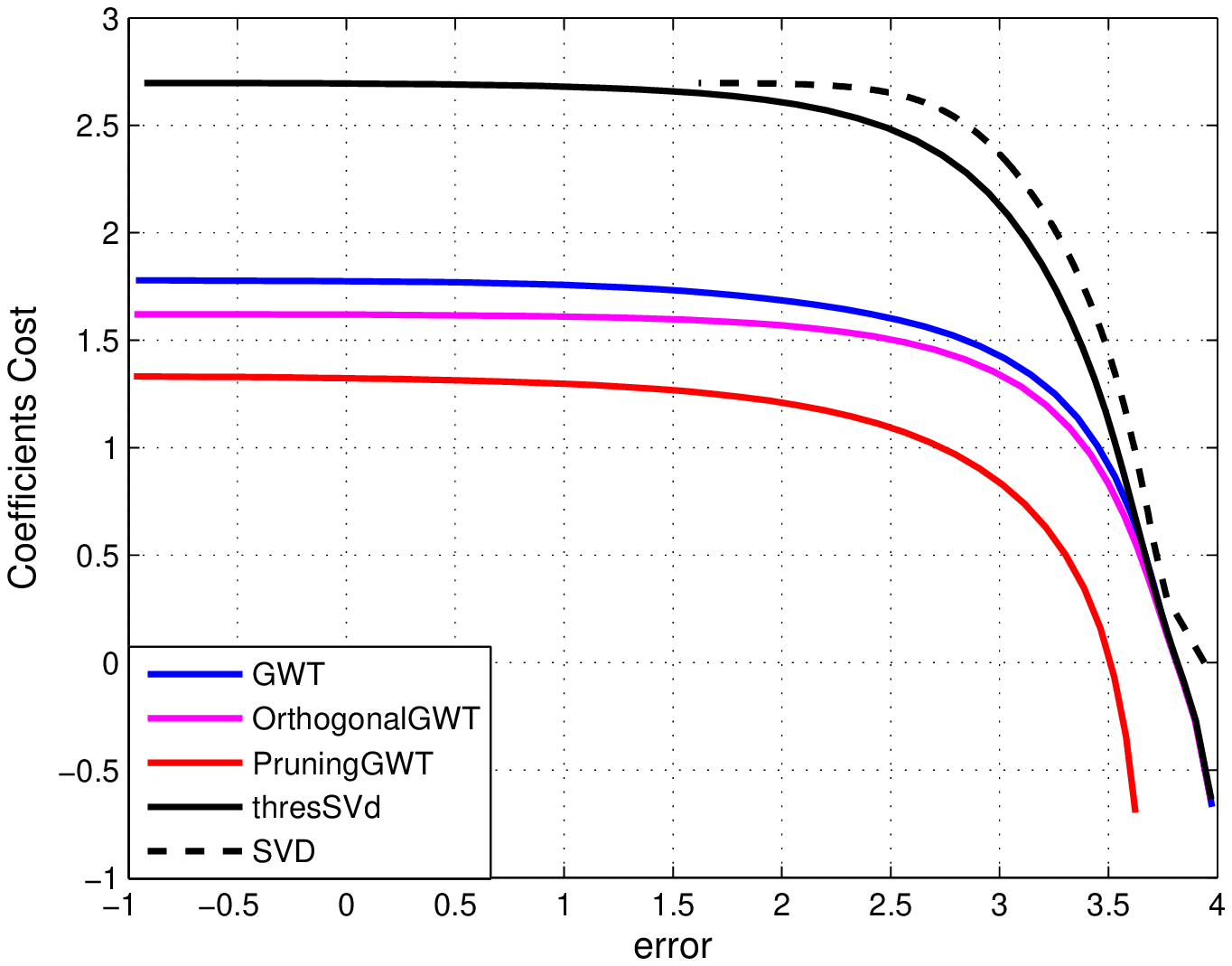}
\includegraphics[width=.32\columnwidth]{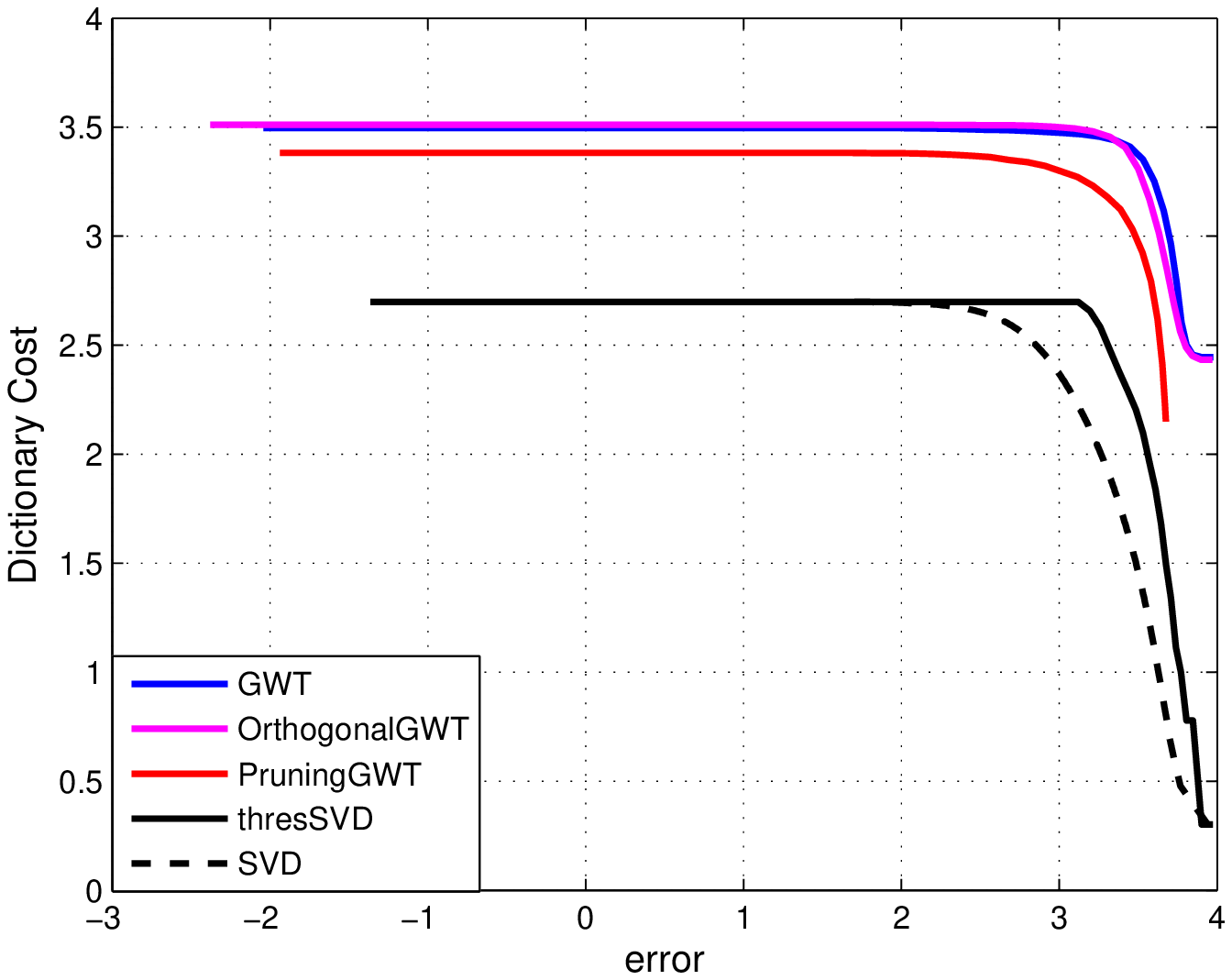}
\\
\includegraphics[width=.32\columnwidth]{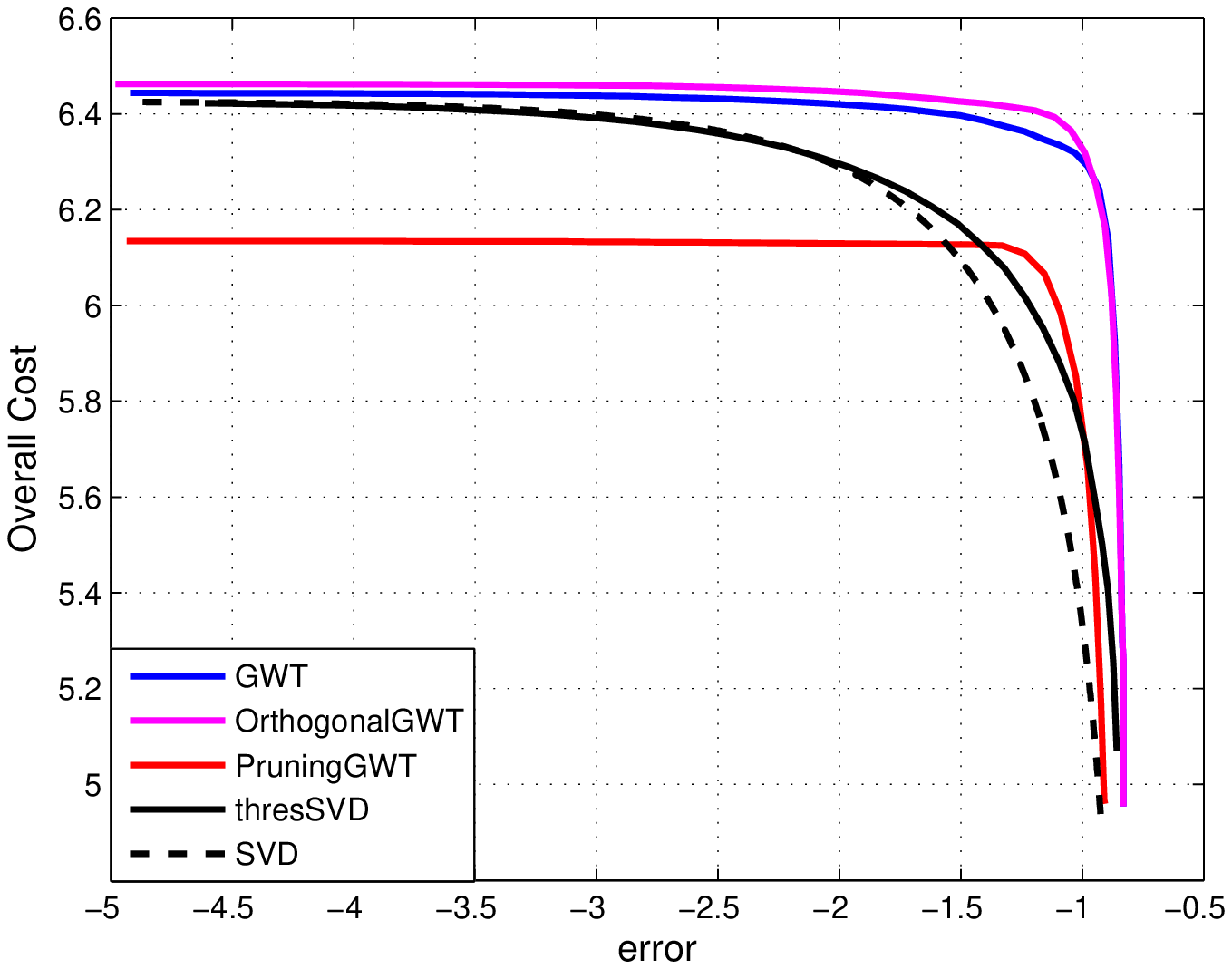}
\includegraphics[width=.32\columnwidth]{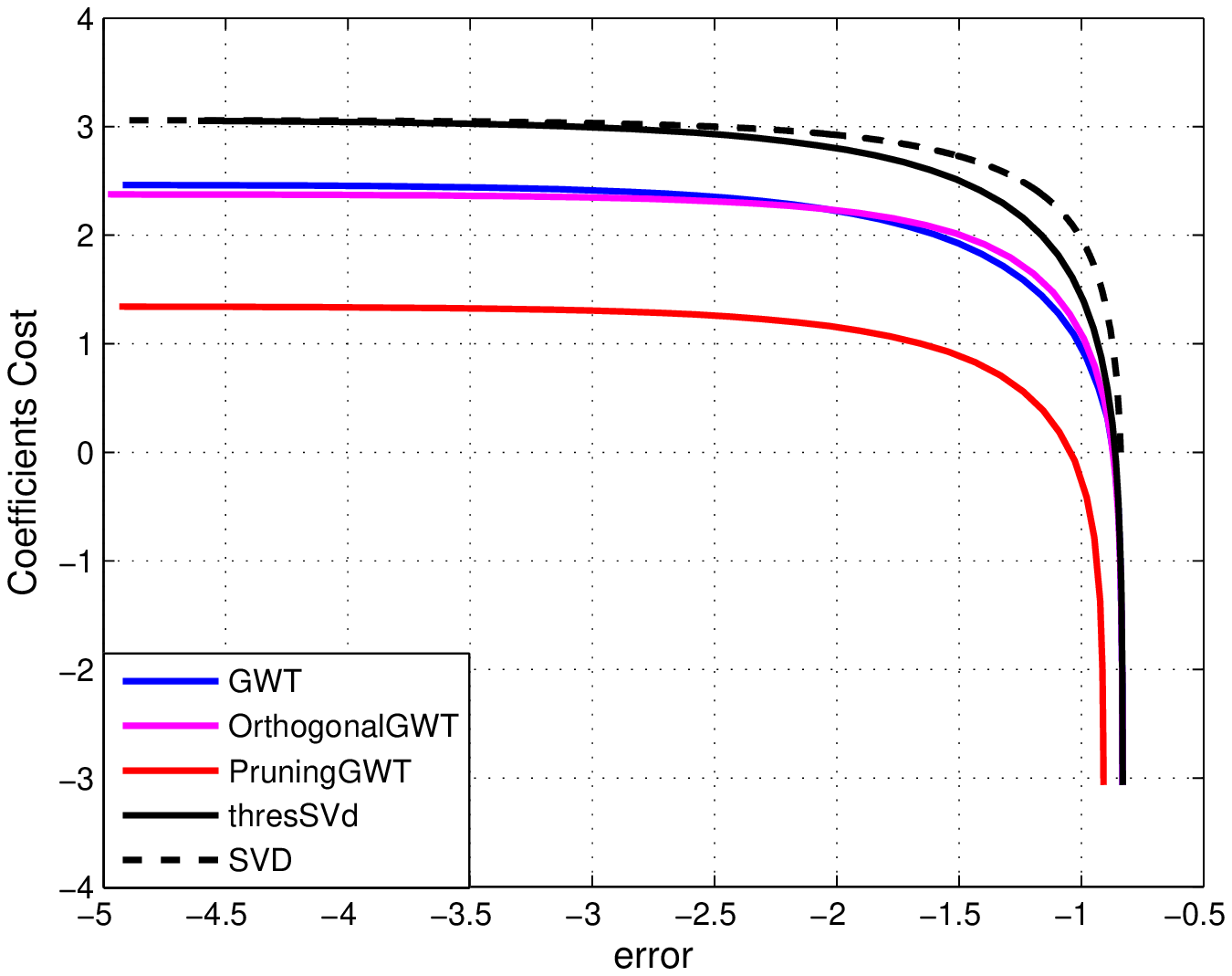}
\includegraphics[width=.32\columnwidth]{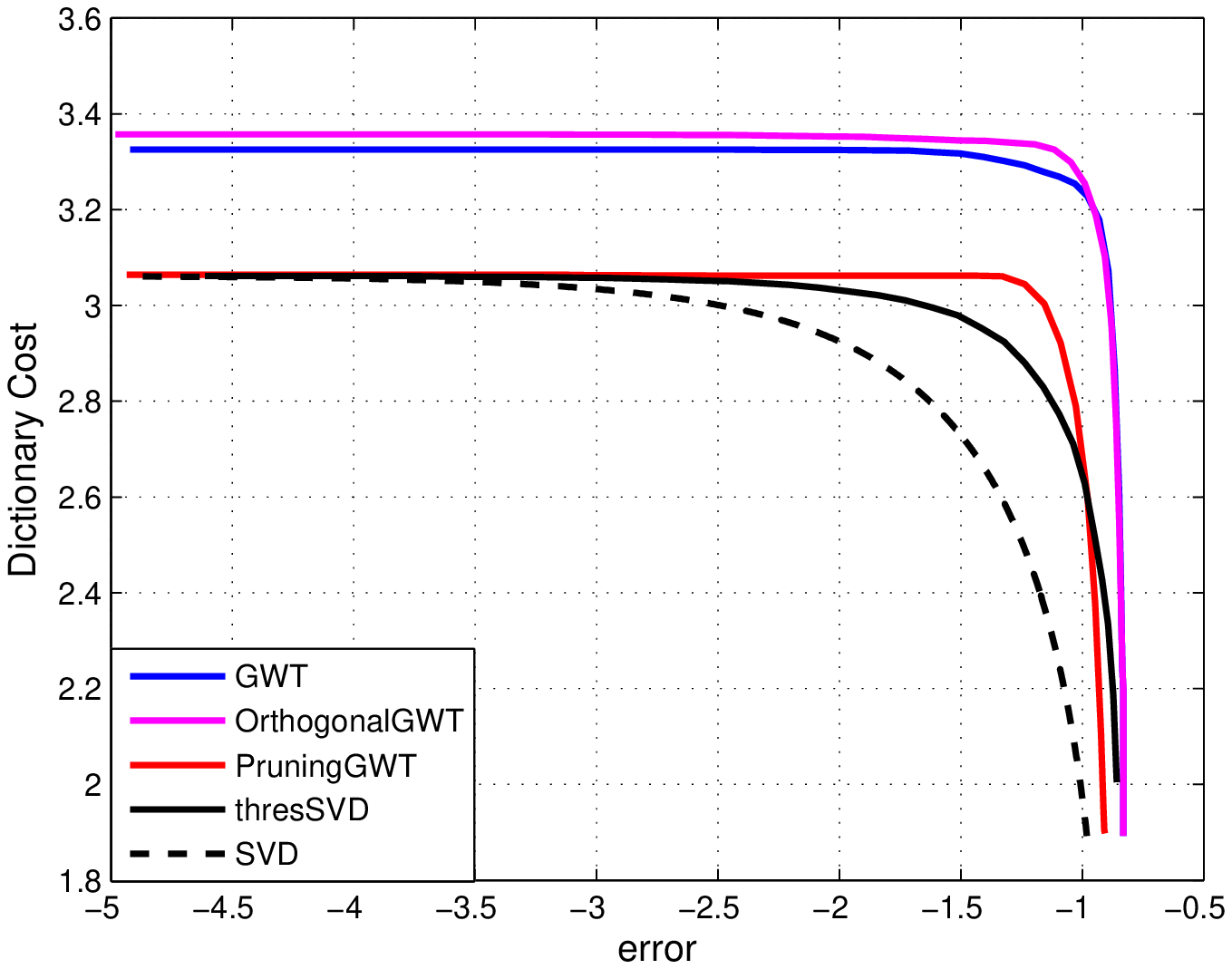}
\caption{Cost-error curves for different kinds of encoding costs (left to right columns: overall, coefficients, dictionary) obtained on the three real data sets (top to bottom rows: MNIST digits, Yale Faces, and Science News)
by the GMRA and SVD algorithms (represented by different curves in different colors).
We see that all GMRA versions outperform SVD and its thresholding version in terms of coefficient costs (middle column), but take more space to store the dictionary (right column).
This makes sense from the sparse coding perspective.
Overall, the pruning GMRA algorithm does the best, while the other two GMRA versions have very close performance with both versions of SVD (see left column).}
\label{f:ScienceNews_distortionCurves}
\end{figure}

%
% Computational considerations
%
\section{Computational considerations} \label{sec:computational}

The computational cost may be split as follows.

\noindent{\em{Construction of proximity graph}}: we find the $k$ nearest neighbors of each of the $n$ points. Using fast nearest neighbor codes (e.g.~cover trees~\cite{LangfordICML06-CoverTree} and references therein) the cost is $\bigO_{d,D}(n\log n)$, with the constant being exponential in $d$, the intrinsic dimension of $\M$, and linear in $D$, the ambient dimension. The cost of computing the weights for the graph is $\bigO(knD)$.

\noindent{\em{Graph partitioning}}: we use METIS~\cite{KarypisSIAM99-METIS} to create a dyadic partition, with cost $\bigO(kn\log n)$. We may (albeit in practice we do not) compress the METIS tree into a $2^{d}$-adic tree; however, this will not change the computational complexity below.

\noindent{\em{Computation of the $\Phi_\jk$'s}}: At scale $j$ each cell $\C_\jk$ of the partition has a number of points $n_\jk=\bigO(2^{-jd} n)$, and there are $|\mathcal{K}_j|=\bigO(2^{jd})$ such $\C_\jk$'s. The cost of computing the rank-$d$ SVD in each $\C_\jk$ is $\bigO(n_\jk D d)$, by using the algorithms of \cite{rst:RandomizedSVD}. Summing over $j=0,1,\ldots, J$ with $J\sim \log_{2^d}n$ we obtain a total cost $\bigO(Dn\log n)$. At this point we have constructed all the $\Phi_\jk$'s. Observe that instead of $J\sim \log_{2^d}n$ we may stop at the coarsest scale at which a predetermined precision $\epsilon$ is reached (e.g.~$J\sim\log_2\frac{1}{\sqrt\epsilon}$ for a smooth manifold). In this case, the cost of this part of the algorithm only depends on $\epsilon$ and is independent of $n$. A similar but more complex strategy that we do not discuss here could be used also for the first two steps.

\noindent{\em{Computation of the $\Psi_\jk$'s}}: For each cell $\C_\jk$, where $j<J$, the wavelet bases $\Psi_\jpkp$, $k'\in\children(\jk)$ are obtained by computing the partial SVD of a $d\times 2^dd$ matrix of rank at most $d$, which takes $\bigO(\dimamb\cdot 2^\dimX \dimX\cdot d)$. Summing this up over all $j<J$, we get a total cost of $\bigO(n\dimamb\dimX^2)$.

Overall, the algorithm costs
\begin{equation}\bigO(n \dimamb(\log(n)+\dimX^2))+\bigO_{\dimX,\dimamb}(n\log n)\,.\end{equation}
The cost of performing the FGWT of a point (or its inverse) is the sum of the costs of finding the closest leaf node, projecting onto the corresponding geometric scaling function plane, and then computing the multi-scale coefficients:
\begin{equation}\underbrace{O_d(\dimamb\log n)}_{\begin{smallmatrix}\text{cost of finding}\\  \text{nearest\ } c_{J,k}\end{smallmatrix}}+\underbrace{\dimX\dimamb}_{\begin{smallmatrix}\text{cost of}\\  \text{projecting on\ } \Phi_{J,x}\end{smallmatrix}}+\underbrace{O(\dimX^2\log \epsilon^{-\frac12})}_{\begin{smallmatrix}\text{cost of multi-scale}\\  \text{transform}\end{smallmatrix}}\,,\end{equation}
with the $O_d$ in the first term subsuming an exponential dependence on $d$.
The cost of the IGWT is similar, but without the first term.

We report some results in practical performance in Fig.~\ref{f:Timings}.

\begin{figure}[t]
\begin{minipage}{0.32\columnwidth}
\includegraphics[width=\columnwidth]{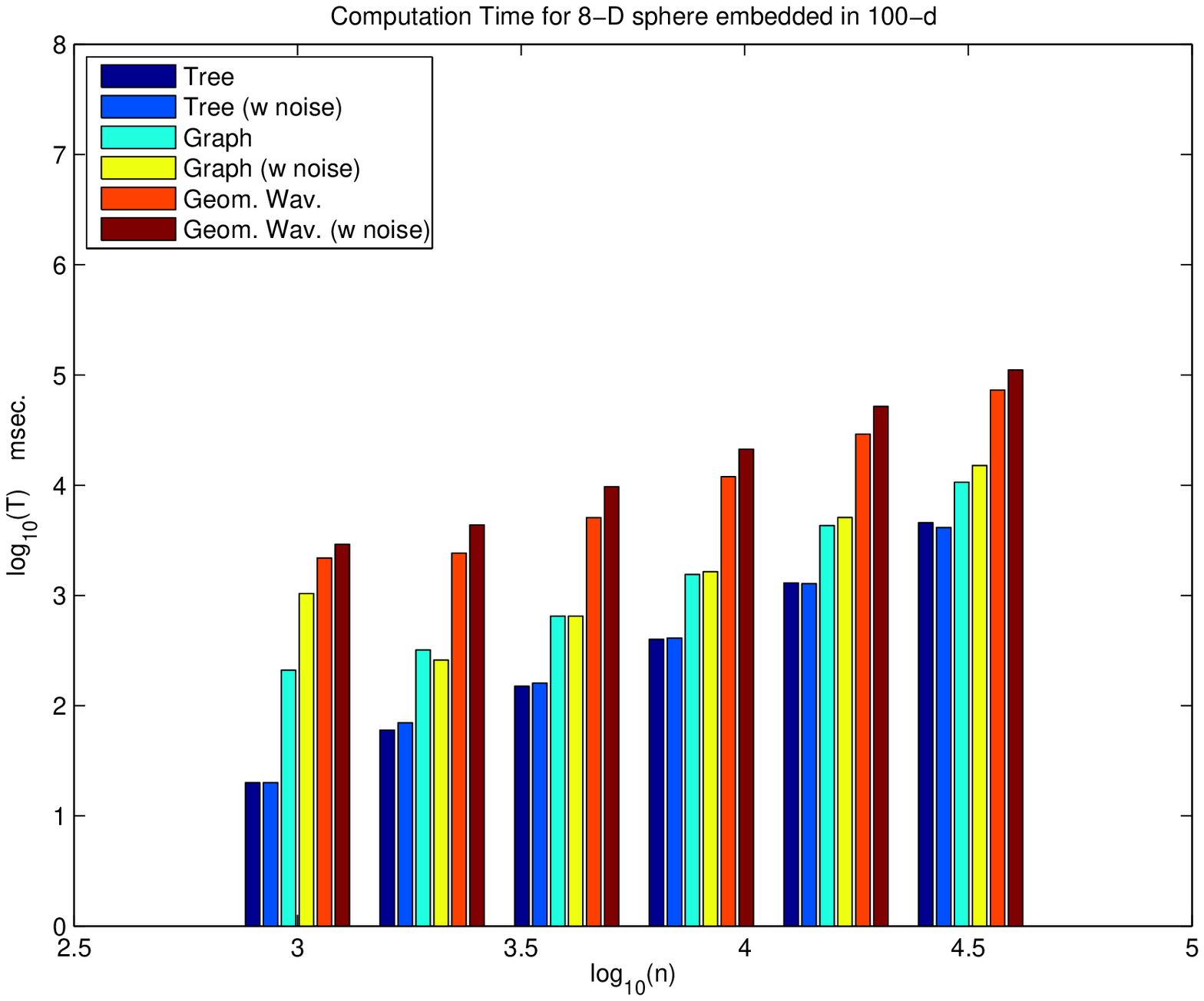}
\end{minipage}
\begin{minipage}{0.32\columnwidth}
\includegraphics[width=\columnwidth]{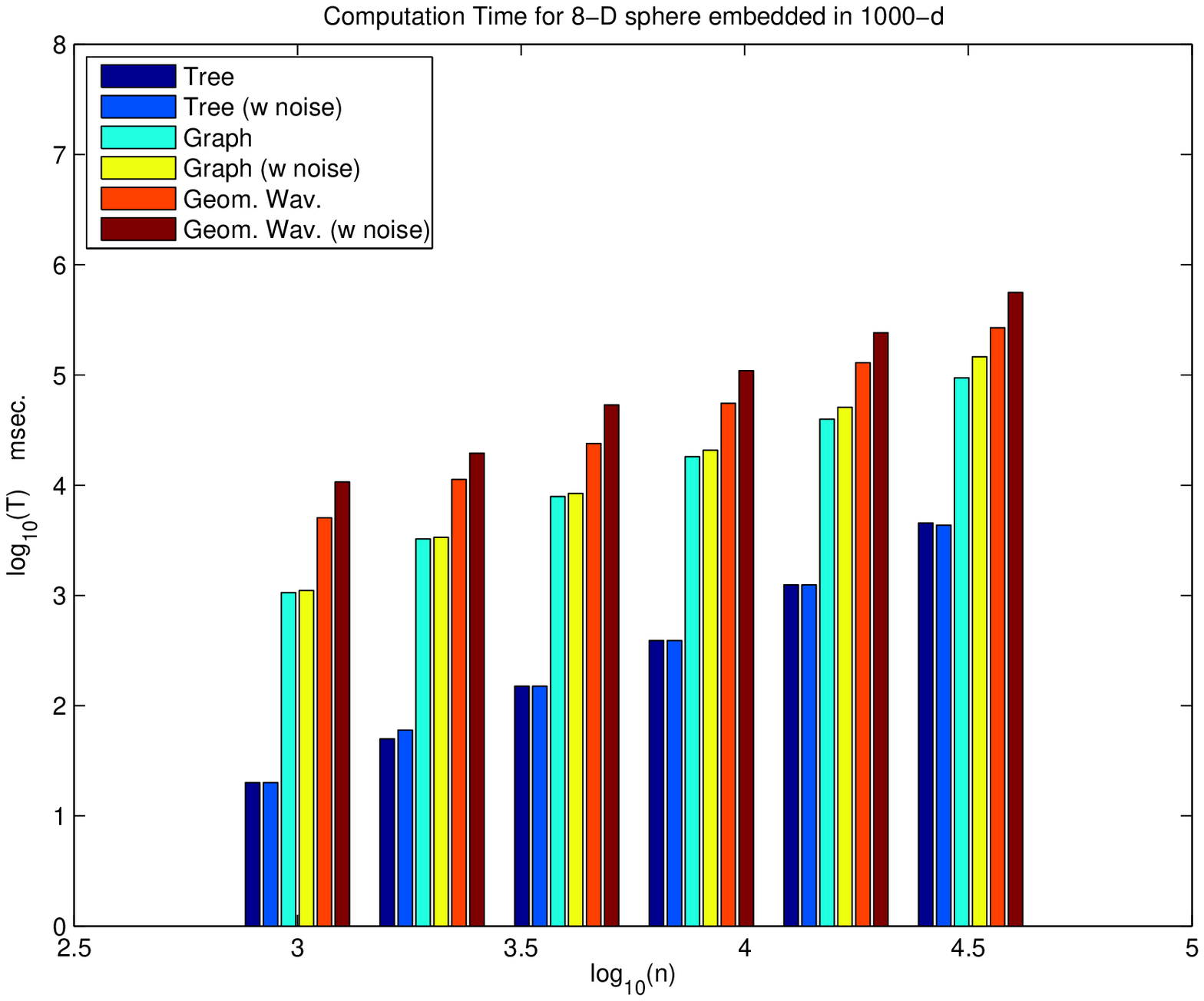}
\end{minipage}
\begin{minipage}{0.32\columnwidth}
\includegraphics[width=\columnwidth]{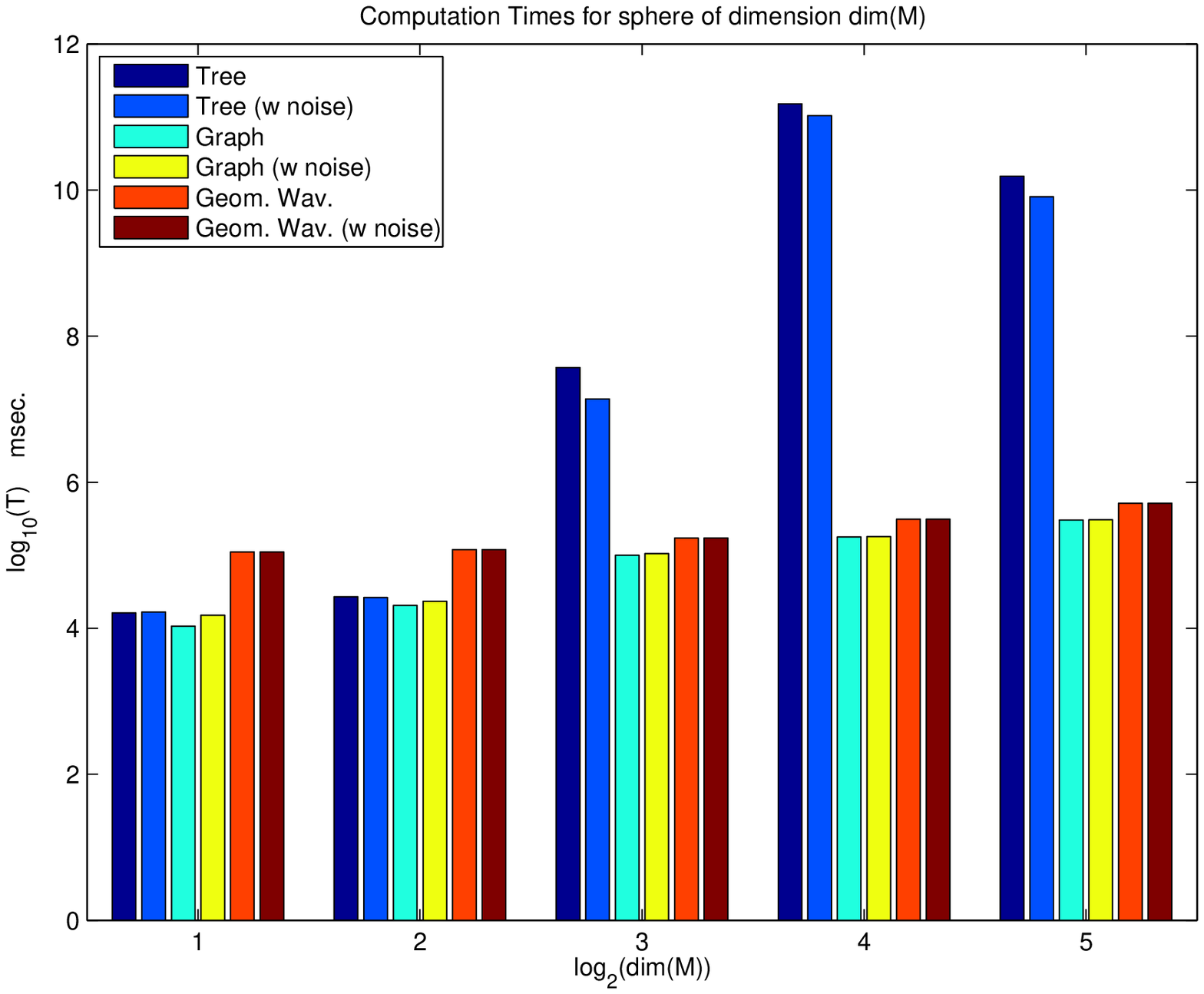}
\end{minipage}
\caption{Timing experiments for the construction of geometric wavelets. We record separately the time to construct the nearest neighbor graph ('Graph'), the multi-scale partitions ('Tree'), and the geometric wavelets ('Geom.~Wav.'). Left: time in \textit{miliseconds} (on the vertical axis, in $\log_{10}$ scale) vs. $n$ (on the horizontal axis, also $\log_{10}$ scale) for $\sphere{d}(n,D,\sigma)$, for $n=1000,2000,4000,8000,16000,32000$, $d=8$, $D=100$, and $\sigma=0,\frac{0.5}{\sqrt D}$. All the computational times grow linearly in $n$, with the noise increasing the computational time of each sub-computation. Center: same as left, but with $D=1000$. A comparison with the experiment on the left shows that the increased ambient dimensionality does not cause, in this instance, almost any increase in the noiseless case, and in the noisy case the increase is a meager factor of $10$, which is exactly the cost of handling vectors which are $10$ times larger in distance computations, with no curse of ambient dimensionality. Right: computation times as a function of intrinsic dimension: we vary $d=2,4,8,16,32$ (in $\log_{10}$ scale on the horizontal axis)), and notice a mild increase in computation time, but with higher variances in the times for the computation of the multi-scale partitions.
%Tests were run on an Apple$\textsuperscript{\textregistered}$ Mac Pro with 2 x 2.93Ghz Quad-Core Intel Xeon$\textsuperscript{\textregistered}$ processors, 32 Gb 1066 Mhz DDR3 RAM, and Matlab$\textsuperscript{\textregistered}$ 7.10 with parallel mode enabled. Code was not optimized; even with no optimization for parallelism employed, Matlab$\textsuperscript{\textregistered}$ automatically parallelized part of the geometric wavelet construction.
}
\label{f:Timings}
\end{figure}

%
% Modeling of distributions
\section{A na\"ive attempt at modeling distributions}
\label{sec:modelingDistribution}

\begin{figure}[t]
\centering
%\begin{minipage}{0.48\textwidth}
\includegraphics[width=0.49\textwidth]{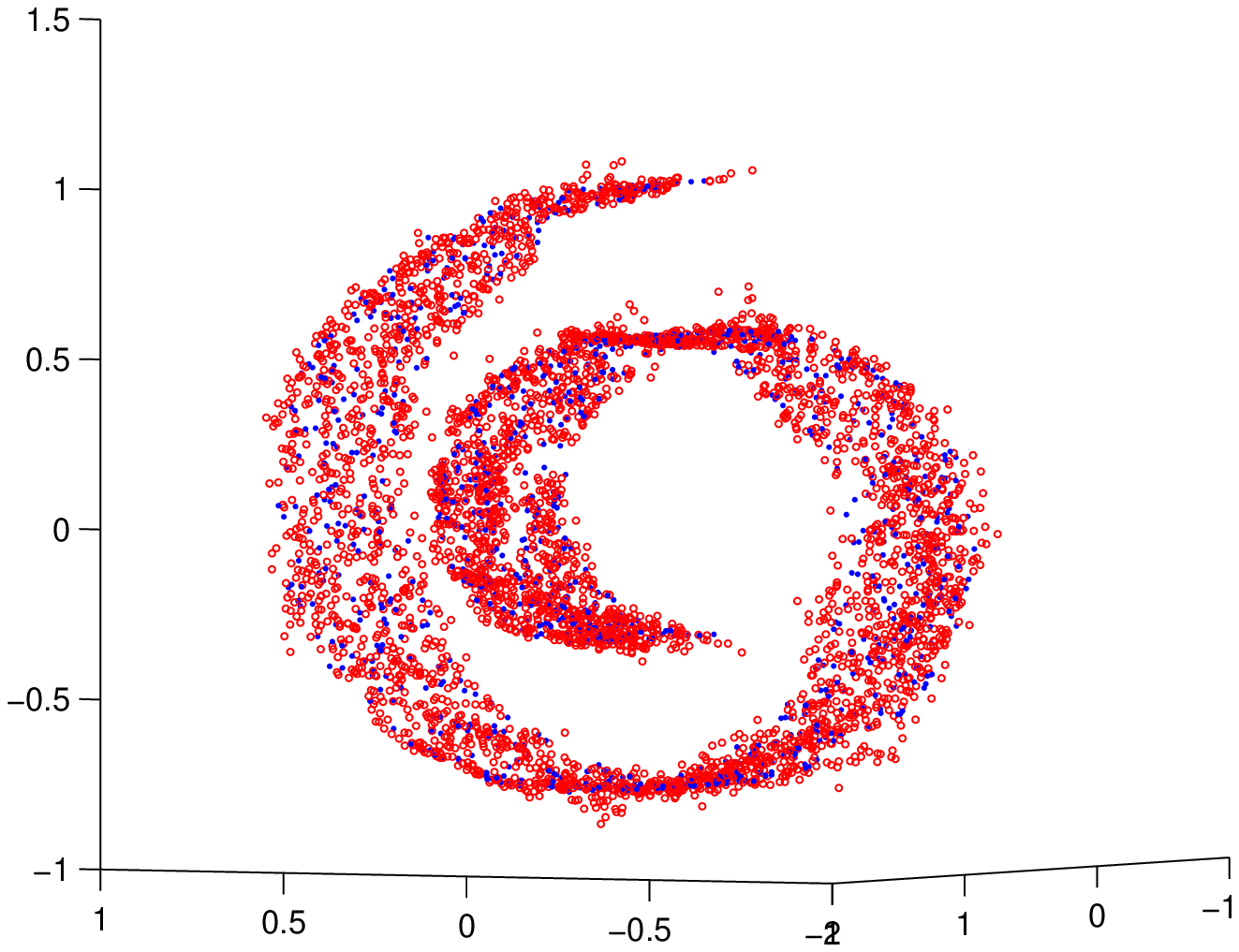}
\includegraphics[width=0.49\textwidth]{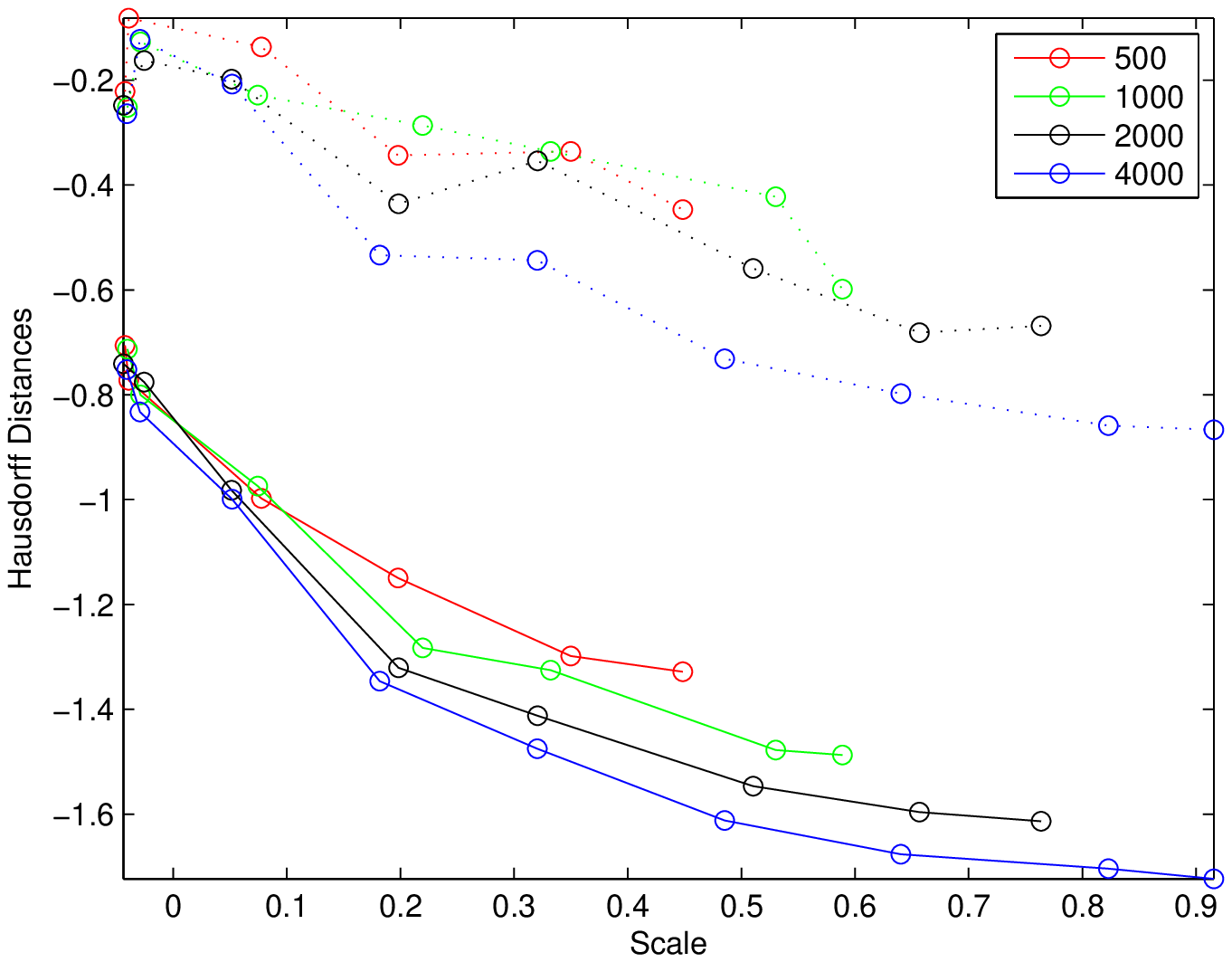}
%\end{minipage}
\caption{We generate a family of multi-scale models $\{p_i\}_{i=1}^4$, from $500,1000,2000,4000$ (corresponding to $i=1,\dots,4$) training samples from the swiss-roll manifold. Left: the blue points are $1000$ training points, the red points are $4000$ points generated according to $p_2$ at the finest scale $j=6$. Right: for each $i=1,\dots,4$ and each scale $j$, we generate from $p_i$ at scale $j$ a point cloud of $4000$ samples, and measure its Hausdorff distance (dotted lines) and ``Hausdorff median distance'' (continuous lines) from a randomly generated point cloud with $4000$ points from the true distribution on the swiss roll. The $x$-axis is the scale $j$ of the model used, and colors map the size of the training set. The construction of these models and the generation of the points clouds takes a few seconds on a standard desktop.}
\label{f:GWTSwissRoll}
\end{figure}

\begin{figure}[t]
\centering
%\begin{minipage}{0.48\textwidth}
\includegraphics[width=0.32\textwidth]{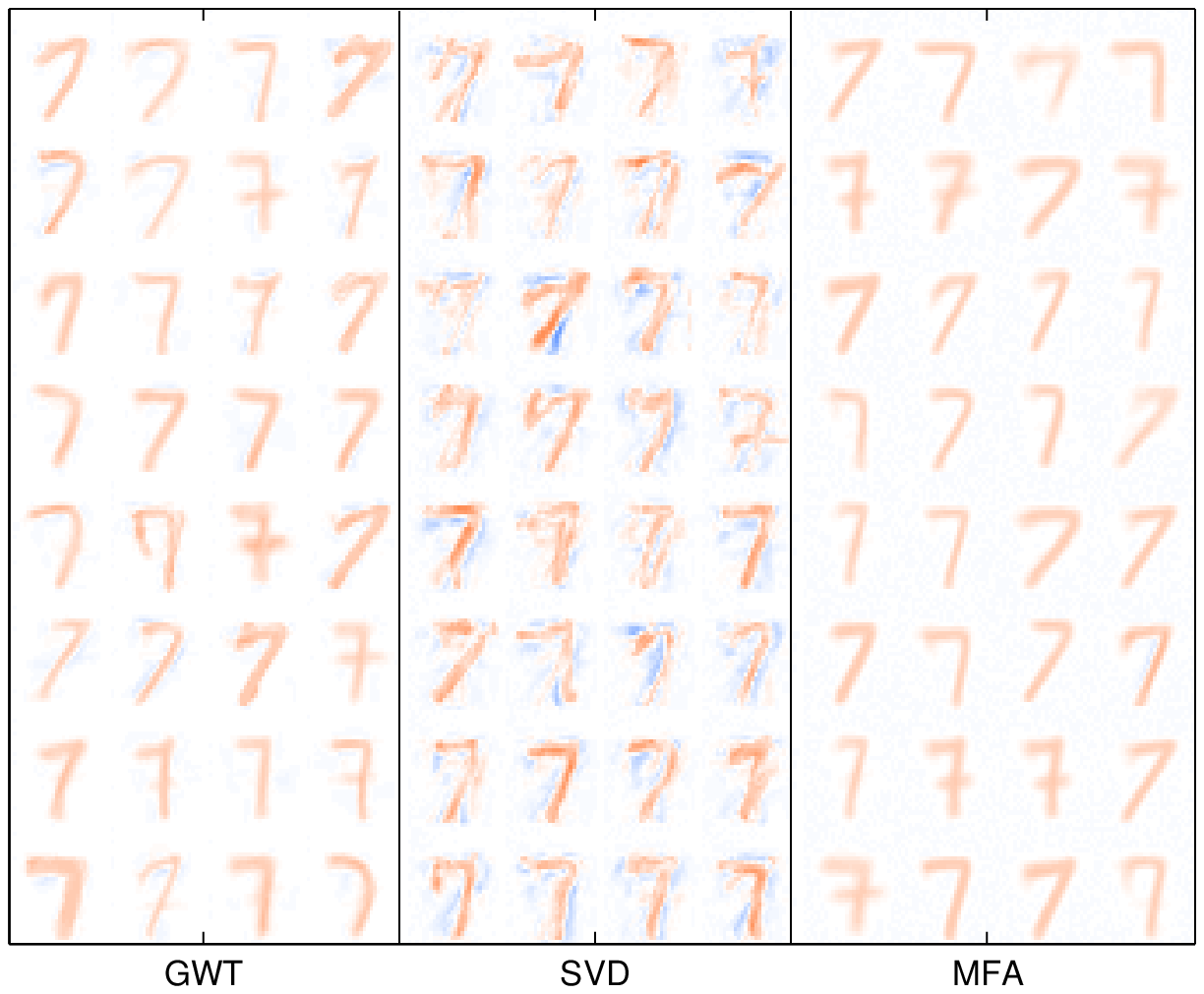}
\includegraphics[width=0.32\textwidth]{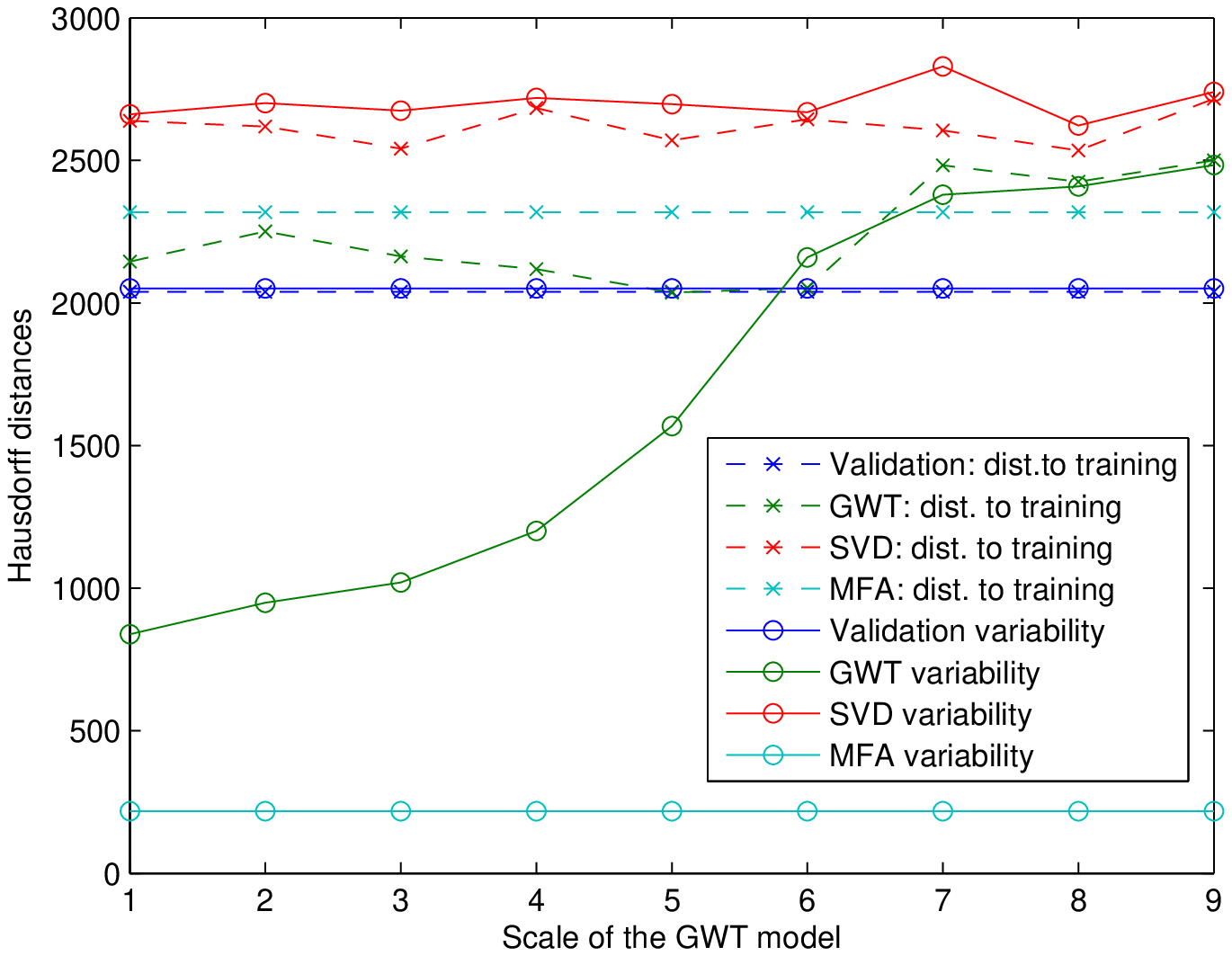}
\includegraphics[width=0.32\textwidth]{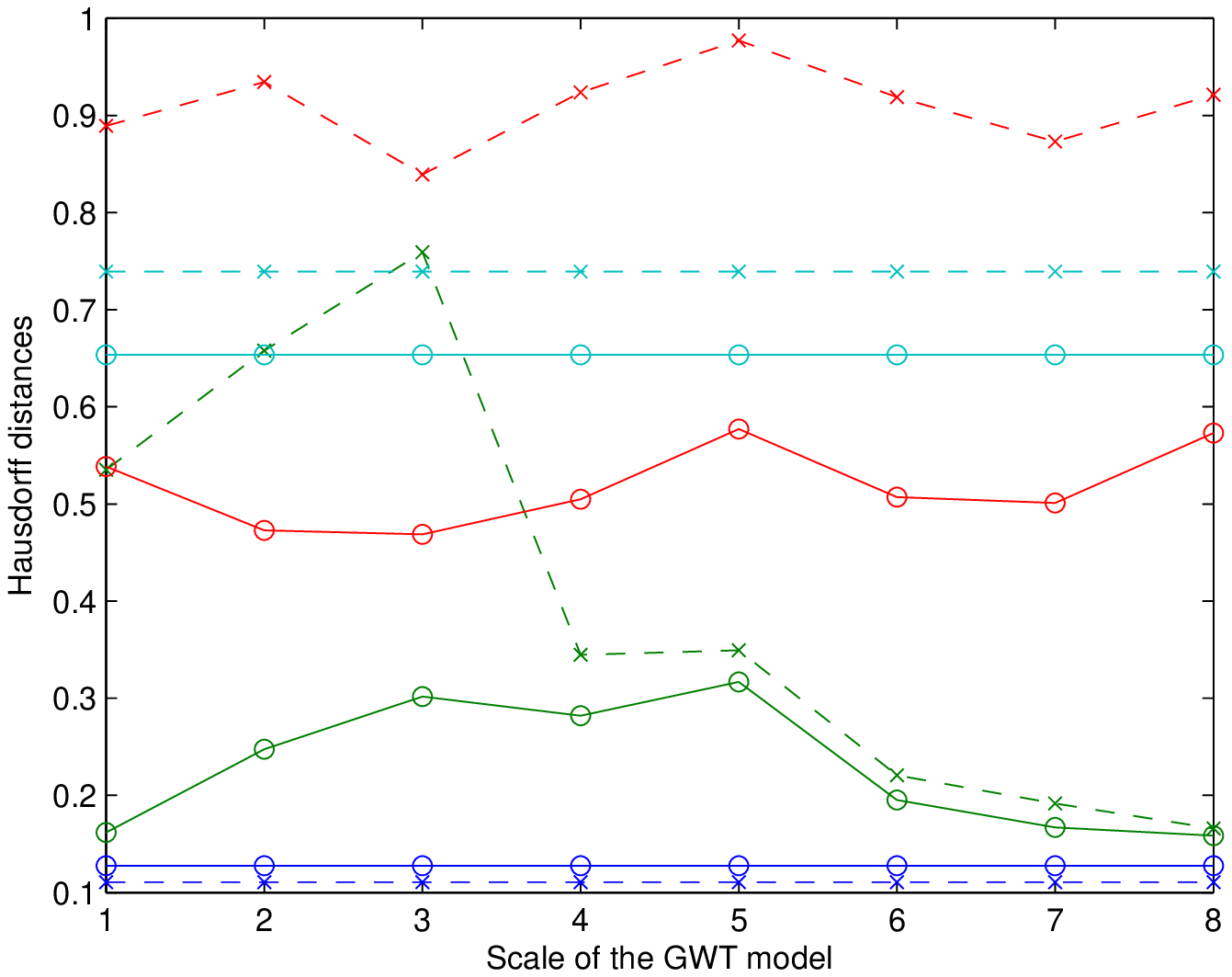}
%\end{minipage}
\caption{A training set of $2000$ digits $7$ from the MNIST data set are used to train probability models with GMRA ($p_{\M_j}$, one for each scale $j$ in the GMRA of the training set), SVD ($p_{SVD_j}$, one for each GMRA scale, see text), and MFA $p_{MFA}$. Left: $32$ digits drawn from $p_{\M_5}$, $p_{SVD_5}$ and $p_{MFA}$: the quality of $p_{\M_5}$ and $p_{MFA}$ is qualitatively better than that of $p_{SVD_5}$; moreover $p_{\M_5}$ seem to capture more variability than $p_{MFA}$. Center: plots of the Hausdorff distance to training set and in-model Hausdorff distance variability. We see that both $p_{\M_j}$ and $p_{MFA}$ have similar distance to the training set, while $p_{SVD_j}$, being a model in the ambient space, generates points farther from the distribution. Looking at the plots of the in-model Hausdorff distance variability, we see that such measure increases for $p_{\M_j}$ as a function of $j$ (reflecting the increasing expression power of the model), while the same measure for $p_{MFA}$ is very small, implying that MFA fails to capture the variability of the distribution, and simply generates an almost fixed set of points (in fact, local averages of points in the training set), well-scattered along the training set. Timings: construction of GMRA and model construction for all scales for GMRA took approximately 1 min, for SVD 0.3 min, for MFA about 15 hrs. Right: a similar experiment with a training set of $2000$ points from a swissroll shaped manifold with no noise: the finest scale GMRA-based models perform best (in terms of both approximation and variability, the SVD-based models are once again unable to take advantage of the low-intrinsic dimension, and MFA-based models fail as well, to succeed they seem to require tuning the parameters far from the defaults, as well as a much larger training set. Timings: construction of GMRA and model construction for all scales for GMRA took approximately 4 sec, for SVD 0.5 sec, for MFA about 4 hrs.}
\label{f:GenerativeModelsComp}
\end{figure}

We present a simple example of how our techniques may be used to model measures supported on low-dimensional sets which are well-approximated by the multi-scale planes we constructed; results from more extensive investigations will be reported in an upcoming publication.

We sample $n$ training points from a point cloud $\mathcal{M}$ and, for a fixed scale $j$, we consider the coarse approximation $\mathcal{M}_j$ (defined in \eqref{e:M_j}), and on each local linear approximating plane $\V_\jk$ we use the training set to construct a multi-factor Gaussian model on $\C_\jk$: let $\probjk_\jk$ be the estimated distribution.
We also estimate from the training data the probability $\pi_j(k)$ that a given point in $\mathcal{M}$ belongs to $\C_\jk$ (recall that $j$ is fixed, so this is a probability distribution over the $|\mathcal{K}_j|$ labels of the planes at scale $j$).
We may then generate new data points by drawing a $k\in\mathcal{K}_j$ according to $\pi_j$, and then drawing a point in $\V_\jk$ from the distribution $\probjk_\jk$: this defines a probability distribution supported on $\M_j$, that we denote by $p_{\M_j}$.

In this way we may generate new data points which are consistent with both the geometry of the approximating planes $\V_\jk$ and with the distribution of the data on each such plane.
In Fig.~\ref{f:GWTSwissRoll} we display the result of such modeling on a simple manifold.
In Fig.~\ref{f:GenerativeModelsComp} we construct $p_{\M_j}$ by training on $2000$ handwritten $7$'s from the MNIST database, and on the same training set we train two other algorithms: the first one is based on projecting the data on the first $a_j$ principal components, where $a_j$ is chosen so that the cost of encoding the projection and the projected data is the same as the cost of encoding the GMRA up to scale $j$ and the GMRA of the data, and then running the same multi-factor Gaussian model used above for generating $\probjk_\jk$. This leads to a probability distribution we denote by $p_{SVD_j}$. Finally, we compare with the recently-introduced Multi-Factor Analyzer (MFA) Bayesian models from \cite{CarinNIPS09}.
In order to test the quality of these models, we consider the following two measures. The first measure is simply the Hausdorff distance between $2000$ randomly chosen samples according to each model and the training set: this is measuring how close the generated samples are to the training set. The second measure quantifies if the model captures the variability of the true data, and is computed by generating multiple point clouds of $2000$ points for a fixed model, and looking at the pairwise Hausdorff distances between such point clouds, called the within-model Hausdorff distance variability.

The bias-variance tradeoff in the models $p_{\M_j}$ is the following: as $j$ increases the planes better model the geometry of the data (under our usual assumptions), so that the bias of the model (and the approximation error) decreases as $j$ increases; on the other hand the sampling requirements for correctly estimating the density of $\C_\jk$ projected on $\V_\jk$ increases with $j$ as less and less training points fall in $\C_\jk$. A pruning greedy algorithm that selects, in each region of the data, the correct scale for obtaining the correct bias-variance tradeoff, depending on the samples and the geometry of the data, similar in spirit to the what has been studied in the case of multi-scale approximation of functions, will be presented in a forthcoming publication.

%
% Future work
\section{Future work} \label{sec:FutureWork}
We consider this work as a first ``bare bone'' construction, which may be refined in a variety of ways and opens the way to many generalizations and applications.
For example:
\begin{itemize}
\item \textbf{User interface}. We are currently developing a user interface for interacting with the geometric wavelet representation of data sets \cite{MM:Vast2010}.
\item \textbf{Higher order approximations}. One can extend the construction presented here to piecewise quadratic, or even higher order, approximators, in order to achieve better approximation rates when the underlying set is smoother than $\mathcal{C}^2$.
\item \textbf{Better encoding strategies for the geometric wavelet tree}. The techniques discussed in this paper are not expected to be optimal, and better tree pruning/tuning constructions may be devised. In particular, to optimize the encoding cost of a data set, the geometric wavelet tree should be pruned and slightly modified to use a near-minimal number of dictionary elements to achieve a given approximation precision $\epsilon$.
\item \textbf{Sparsifying dictionary}. While the approximation only depends on the subspaces $\myspan{\Phi_\jk}$, the sparsity of the representation of the data points will in general depend on the choice of $\Phi_\jk$ and $\Psi_\jk$, and such choice may be optimized (``locally'' in space and in dimension) by existing algorithms, thereby retaining both the approximation guarantees and the advantages of running these black-box algorithms only on small number of samples and in a low-dimensional subspace.
\item \textbf{Probabilistic construction}. One may cast the whole construction in a probabilistic setting, where subspaces are enriched with distributions on those subspaces, thereby allowing geometric wavelets to generate rich families of probabilistic models.
\end{itemize}

\section{Appendix}
\label{s:appendix}

\begin{proof}[Proof of Theorem \ref{t:GWT}].
The first equality follows by recursively applying the two-scale equation \eqref{e:twoscaleeq}, so we only need to prove the upper bound.
We start with the case $p=+\infty$.
By compactness, for every $x\in\mathcal{M}$ and for $j_0$ large enough and $j\ge j_0$, there is a unique point $z_\jx\in\M$ closest to $\ctr_\jx$, and $\C_\jx$ is the graph of a $\mathcal{C}^{1+\alpha}$ function $f:=f_\jx: P_{T_{z_\jx}}(\C_\jx)\rightarrow \C_\jx$, where $T_{z_\jx}(\M)$ is the plane tangent to $\M$ at $z_\jx$.
Note that this is true whether we construct dyadic cells $\C_\jx$ with respect to the manifold metric $\rho$, or by intersecting Euclidean dyadic cubes with $\M$.
The following calculations are in the spirit of those in \cite{LMR:MGM1}.
Since all the quantities involved are invariant under rotations and translations, up to a change of coordinates we may assume that $f(z_\jx)=0$, $T_{z_\jx}=\langle x_1,\dots,x_\dimX\rangle$.
Assume $\alpha=1$, i.e. the manifold is $\mathcal{C}^2$.
In the coordinates above the function $f=:(f_1,\dots,f_{\dimamb-\dimX})$ above may be written
\begin{equation}f_i(w)=\frac12 (w-z_\jx)^T H_if|_{z_\jx}(w-z_\jx)+o(||w-z_\jx||^2)\,,\end{equation}
where $H_i$ is the $\dimX\times\dimX$ Hessian of the $i$-th coordinate $f_i$ of $f$.
The calculations in \cite{LMR:MGM1} show that, up to higher order terms, $\Vaff_\jx$ is parallel to $T_{z_\jx}$, and differs from it by a translation along the normal space $N_{\ctr_\jx}$, since $\Vaff_\jx$ passes through $\ctr_\jx$ while $T_{z_jx}$ passes through $z_\jx$. Therefore we have
\begin{equation*}
\begin{aligned}
&\left\| ||z-P_{\M_j}(z)||_{\mathbb{R}^\dimamb}\right\|_{L^\infty(\C_\jx)}
=\sup_{z\in\C_\jx} ||z-\Paff_\jx(z)||_{\mathbb{R}^\dimamb}\\
&=\sup_{z\in\C_\jx} ||z-P_{T_{z_\jx}}(z-\ctr_\jx)-\ctr_\jx||_{\mathbb{R}^\dimamb}\\
&\le\sup_{z\in\C_\jx} ||(z-z_\jx)-P_{T_{z_\jx}}(z-z_\jx)||_{\mathbb{R}^\dimamb}+||z_\jx-\ctr_\jx||_{\mathbb{R}^\dimamb}\\
&\le\sup_{w\in P_{T_{z_\jx}}(\C_\jx)} \left\|\frac12 (w-z_\jx)^* H_if|_{z_\jx}(w-z_\jx)+o(||w-z_\jx||^2)\right\|_{\mathbb{R}^\dimamb}\\&\qquad+||z_\jx-\ctr_\jx||_{\mathbb{R}^\dimamb}\\
&\le 2\kappa2^{-2j}+o(2^{-2j})\,,
\end{aligned}
\end{equation*}
where $\kappa=\frac{1}{2}\max_{i\in\{1,\dots,\dimamb-\dimX\}} ||H_i||$ is a measure of extrinsic curvature, and where we used that $\ctr_\jx$ is in the convex hull of $\C_\jx$.
A similar calculation applies to the case where $f_i\in\mathcal{C}^{1+\alpha}$, where $O(||w-z_\jx||^{1+\alpha})$ replaces the second order terms, and $\kappa$ is replaced by $\max_{i\in\{1,\dots,\dimamb-\dimX\}}||\nabla f_i||_{\mathcal{C}^\alpha}$.

We now derive an $L^2(\C_\jx,\mu_\jx)$ estimate:
\begin{equation*}
\begin{aligned}
&\left\| ||z-P_{\M_j}(z)||_{\mathbb{R}^\dimamb}\right\|_{L^2(\C_\jx,d\mu_\jx(z))}^2\\
&=\frac{1}{\mu(\C_\jx)}\int_{C_{j,x}} \left\|z-\Paff_\jx(z)\right\|_{\mathbb{R}^\dimamb}^2 d\mu(z)\\
&=\underset{\Pi:\mathrm{\ an\ affine\ } \dimX-\mathrm{plane}}{\operatorname{min}} \frac{1}{\mu(\C_\jx)}\int_{\C_\jx} \left\|z-P_{\Pi}(z)\right\|^2 d\mu(z)\\
%&\le \underset{A\mathrm{\ of\ rank\ } \dimX}{\operatorname{min}} ||\cov_\jx-A||^2_{F} \\
&= \sum_{l=\dimX+1}^D \lambda_l(\cov_\jx)\\
&\le  \frac{\dimX(\dimX+1)}2 \lambda_{\dimX+1}(\cov_\jx) + o(2^{-4j})\\
&\le  \max_{w\in\mathbb{S}^{\dimamb-\dimX}}\frac{\dimX(\dimX+1)}{4(\dimX+2)(\dimX+4)}\bigg[\left\|\sum_{l=1}^{\dimamb-\dimX}w_l H_l\right\|^2_{F}-\frac1{\dimX+2}\left(\sum_{l=1}^{\dimamb-\dimX}w_l \mathrm{Tr}(H_l)\right)^2\bigg]2^{-4j}\\
&\qquad+o(2^{-4j})\,,
\end{aligned}
\end{equation*}
%\begin{equation*}
%\begin{aligned}
%&\left\| ||z-P_{\M_j}(z)||_{\mathbb{R}^\dimamb}\right\|_{L^2(\C_\jx,d\mu_\jx(z))}^2\\
%&=\frac{1}{\mu(\C_\jx)}\int_{C_{j,x}} \left\|z-\Paff_\jx(z)\right\|_{\mathbb{R}^\dimamb}^2 d\mu(z)\\
%&=\underset{\Pi:\mathrm{\ an\ affine\ } \dimX-\mathrm{plane}}{\operatorname{min}} \frac{1}{\mu(\C_\jx)}\int_{\C_\jx} \left\|z-P_{\Pi}(z)\right\|^2 \frac{d\mu(z)}{d\mathrm{vol}(z)}d\mathrm{vol}(z)\\
%&\le \left\|\frac{d\mu}{d\mathrm{vol}}\right\|_{L^\infty(\C_\jx)}\underset{A\mathrm{\ of\ rank\ } \dimX}{\operatorname{min}} ||\cov_\jx-A||^2_{F} \\
%&= \left\|\frac{d\mu}{d\mathrm{vol}}\right\|_{L^\infty(\C_\jx)}\sum_{l=\dimX+1}^D \lambda_l(\cov_\jx)\\
%&\le  \left\|\frac{d\mu}{d\mathrm{vol}}\right\|_{L^\infty(\C_\jx)}\frac{\dimX(\dimX+1)}2 \lambda_{\dimX+1}(\cov_\jx) + o(2^{-4j})\\
%&\le  \left\|\frac{d\mu}{d\mathrm{vol}}\right\|_{L^\infty(\C_\jx)}\max_{w\in\mathbb{S}^{\dimamb-\dimX}}\frac{\dimX(\dimX+1)}{4(\dimX+2)(\dimX+4)}\bigg[\left\|\sum_{l=1}^{\dimamb-\dimX}w_l H_l\right\|^2_{F}-\frac1{\dimX+2}\left(\sum_{l=1}^{\dimamb-\dimX}w_l \mathrm{Tr}(H_l)\right)^2\bigg]2^{-4j}\\
%&\qquad+o(2^{-4j})\,,
%\end{aligned}
%\end{equation*}
where the inequality before the last follows from the fact that, up to order $2^{-4j}$, there are no more than $\dimX(\dimX+1)/2$ curvature directions, and the last inequality follows from the bounds in \cite{LMR:MGM1}, which formalize the fact that the eigenspace spanned by the top $\dimX$ vectors of $\cov_\jx$ is, up to higher order, parallel to the tangent plane, and passing through a point $c_\jx$ which is second-order close to $\M$, and therefore provides a second-order approximation to $\M$ at scale $2^{-j}$. This latter bounds could be strengthened in obvious ways if some decay of $\lambda_l(\cov_\jk)$ for $l=\dimX+1,\dots,\dimX(\dimX+1)/2$ was assumed.
The estimate in \eqref{e:WD} follows by interpolation between the estimate in $L^2$ and the one in $L^\infty$.
\end{proof}

The measure of curvature multiplying $2^{-4j}$ in the last bound appeared in \cite{LMR:MGM1}: it may be as large as $O((\dimamb-\dimX)\kappa^2)$, but also quite small depending on the eigenvalues of the Hessians $H_l$.

%\bibliographystyle{elsart-num}
%\bibliography{../MyPublications,../DiffusionBib}
%\bibliography{MyPublications,DiffusionBib}

\providecommand{\bysame}{\leavevmode\hbox to3em{\hrulefill}\thinspace}
\providecommand{\MR}{\relax\ifhmode\unskip\space\fi MR }
% \MRhref is called by the amsart/book/proc definition of \MR.
\providecommand{\MRhref}[2]{%
  \href{http://www.ams.org/mathscinet-getitem?mr=#1}{#2}
}
\providecommand{\href}[2]{#2}

\end{document}